\numberwithin{equation}{section}
\theoremstyle{plain}
\newtheorem{theorem}{Theorem}[section]
\newtheorem{lemma}{Lemma}[section]
\newtheorem{remark}{Remark}[section]
\newtheorem{corollary}{Corollary}[section]
\newcommand{\ul}{u_{\hspace{-0.15ex}{}_L}}
\newcommand{\tr}{\top}
\numberwithin{table}{section}
\begin{document}

\begin{frontmatter}
\title{A Concentration Bound for Stochastic Approximation via Alekseev's Formula}
\runtitle{A Concentration Bound for Stochastic Approximation}

\begin{aug}
\author{\fnms{Gugan} \snm{Thoppe}\thanksref{t1,m1}
\ead[label=e1]{gugan@tcs.tifr.res.in}}
\and
\author{\fnms{Vivek} \snm{Borkar}\thanksref{t2,m2}
\ead[label=e2]{borkar.vs@gmail.com}}

\thankstext{t1}{Supported partially by an IBM PhD fellowship.}
\thankstext{t2}{Supported in part by a  J. C. Bose Fellowship and a grant `\emph{Approximation of High Dimensional Optimization and Control Problems}' from the Department of Science and Technology, Government of India.}
\runauthor{G. Thoppe \and V. Borkar}

\affiliation{TIFR, India\thanksmark{m1} and IITB, India\thanksmark{m2}}

\address{School of Tech. \& Comp. Sc.,\\
TIFR, Homi Bhabha Road,\\
Colaba, Mumbai, India.\\
Pincode: 400 005.\\
\printead{e1}}

\address{Dept. of Electrical Engineering, \\
IIT, Powai, Mumbai, India.\\
Pincode: 400 076.\\
\printead{e2}}
\end{aug}

\begin{abstract}
%
Given an ODE and its perturbation, the Alekseev formula expresses the solutions of the latter in terms related to the former. By exploiting this formula and a new concentration inequality for martingale-differences, we develop a novel approach for analyzing nonlinear Stochastic Approximation (SA). This approach is useful for studying a SA's behaviour close to a Locally Asymptotically Stable Equilibrium (LASE) of its limiting ODE; this LASE need not be the limiting ODE's only attractor. As an application, we obtain a new concentration bound for nonlinear SA. That is, given $\epsilon >0$ and that the current iterate is in a neighbourhood of a LASE, we provide an estimate for i.) the time required to hit the $\epsilon-$ball of this LASE, and ii.) the probability that after this time the iterates are indeed within this $\epsilon-$ball and stay there thereafter. The latter estimate can also be viewed as the `lock-in' probability. Compared to related results, our concentration bound is tighter and holds under significantly weaker assumptions. In particular, our bound applies even when the stepsizes are not square-summable. Despite the weaker hypothesis, we show that the celebrated Kushner-Clark lemma continues to hold.
\end{abstract}

\begin{keyword}[class=MSC]
\kwd[Primary ]{62L20}
\kwd[; secondary ]{93E25}
\kwd{60G42}
\kwd{34D10}
\end{keyword}

\begin{keyword}
\kwd{stochastic approximation}
\kwd{concentration bound}
\kwd{Alekseev's formula}
\kwd{perturbed ODE}
\kwd{sum of martingale-differences}
\kwd{concentration inequality}
\end{keyword}

\end{frontmatter}

\section{Introduction}









Stochastic Approximation (SA), first introduced in \cite{robbins1951stochastic}, refers to recursive methods that can be used to find optimal points or zeros of a function given only its noisy estimates. It is extremely popular in application areas such as adaptive signal processing, adaptive resource allocation, artificial intelligence, etc. Due to the stochastic nature of these methods, analysis of their convergence and convergence rates is challenging. For generic noise settings, the most powerful analysis tool has been the Ordinary Differential Equation (ODE) approach. Its idea is to show that the noise effects average out so that the asymptotic behavior of a SA method is determined by that of a suitable deterministic ODE, often referred to as the limiting ODE. For more details on the above, see \cite{borkar2008stochastic, benveniste2012adaptive, chen2002stochastic, duflo1997random, kushner2003stochastic, benaim1999dynamics}.


Here we analyze the behaviour of a nonlinear SA method close to a Locally Asymptotically Stable Equilibrium (LASE) of its limiting ODE\footnote{A LASE is an equilibrium that is Liapunov  stable in the following sense: given an $\eta > 0,$ there exists a $\delta > 0$ such that any trajectory of the ODE initiated within $\delta$ distance from this equilibrium remains within $\eta$ distance thereof; furthermore, there is an open neighbourhood such that any trajectory initiated therein converges to this equilibrium. This neighbourhood is called the domain of attraction.}. In particular, we obtain a novel \textit{concentration bound} for nonlinear SA methods. That is, given $\epsilon > 0$ and that the current iterate is in a neighbourhood within the domain of attraction of a LASE, we provide estimates on i.) the time required to hit the $\epsilon-$ball of this LASE, and ii.) the probability that after this time the iterates are indeed within this $\epsilon-$ball and remain there thereafter. Since staying within the $\epsilon-$ball of a LASE from some time on implies that the iterates will eventually converge to this equilibrium, the above probability estimate can also be viewed as an estimate on the so called \textit{`lock-in'} probability \cite{arthur1994increasing},  \cite[Chapter 4]{borkar2008stochastic}. Similar concentration bounds are already available in literature \cite{borkar2008stochastic, kamal2010convergence} in the context of generic attractors. Compared to these, our bound is stronger albeit restricted to the important special case of a LASE.  We achieve the tighter bound by using a finer analysis which strongly exploits the behaviour of ODE solutions near a LASE.


In case of multiple stable attractors, SA methods have a positive probability of convergence to any of them \cite{artur1983generalized}, \cite[Chapter 3]{arthur1994increasing}, \cite[Proposition 7.5]{benaim1999dynamics}. Thus one cannot expect willful convergence to a specific equilibrium except in some special cases; e.g., stochastic gradient schemes controlled by addition of slowly decreasing noise \cite{gelfand1991recursive}. Hence an important first step is to estimate the probability of convergence to an attractor given that the iterate is currently in its domain of attraction. The idea is that in such a situation the SA method will converge to the said attractor with high probability because the mean dynamics, as captured by the limiting ODE, favors it. This in fact is the basis for Arthur's models of increasing returns in economics \cite{arthur1994increasing}. To make the above qualitative (or ‘descriptive’) observation useful (or ‘prescriptive’) by giving it some predictive power, it is essential that those probabilities, the so called trapping or `lock-in' probabilities \cite{arthur1994increasing}, be estimated. This is what this work, and also \cite{borkar2008stochastic, kamal2010convergence}, attempt to do. This is also related in spirit to the extensively studied phenomenon of metastability in statistical physics wherein a statistical mechanical system spends a long time near a stable minimum of its governing energy function other than its global minimum or ‘ground state’ \cite{bovier2016metastability}; this would be the case, e.g., if we worked with constant stepsize SA methods instead of decreasing stepsizes.


In addition to our concentration bound, and more importantly, we provide here a novel approach for analysis of nonlinear SA. The main ingredient of our approach is Alekseev's formula \cite{alekseev1961estimate}; an English account can be found in \cite{brauer1966perturbations}. This formula extends the variation of constants formula \cite{lakshmikantham1998method} to nonlinear settings. That is, given two nonlinear ODEs where one can be treated as a perturbation of the other, Alekseev's formula gives an explicit expression for difference between the solutions of these two ODEs. The other ingredient of our approach is a novel concentration inequality for a sum of martingale-differences that we prove separately; see Theorem~\ref{thm:ConcMartingalesMultivariate} in Appendix. This result is a generalization of  \cite[Theorem 1.1]{liu2009exponential}.


As remarked above, concentration bounds in \cite[Chapter 4, Corollary 14]{borkar2008stochastic} and \cite[Theorem 12]{kamal2010convergence} are for generic attractors. But by taking the generic attractor to be a LASE, these results can be put in a form comparable to our result. It can be then seen that our bound is tighter and holds under significantly weaker assumptions on the stepsize and the noise sequence, but under a stronger regularity requirement (twice continuous differentiability) on the drift;  see Section~\ref{sec:Comparison}. In particular, our bound holds for a larger choice of stepsizes, e.g., $1/(n + 1)^{\mu},$ $\mu \in (0, 1],$ while the previous ones only apply for stepsizes that are square-summable. Despite the weaker hypothesis, we show that the celebrated Kushner-Clark  lemma \cite{kushner2012stochastic} continues to hold. All these happen mainly because the earlier two works use the weaker Gronwall inequality \cite[Corollary 1.1]{bainov1992integral}, while here we use the tighter Alekseev's formula to compare the SA trajectory to a suitable solution of its limiting ODE. In particular, Alekseev's formula allows us to better compare the two in the neighbourhood of a LASE on an infinite time interval.


Concentration bounds of similar flavor to our work have also been obtained recently in \cite[Theorem 2.2]{frikha2012concentration} and \cite[Corollary 2.9]{fathi2013transport}. But as shown in Section~\ref{sec:Comparison}, compared to our work and also to \cite{borkar2008stochastic, kamal2010convergence}, these recent results apply only to a restrictive class of SA methods and hold only under strong assumptions. In particular, results from \cite{frikha2012concentration, fathi2013transport} only apply to SA methods: i) whose form are special cases of the generic model that we handle; ii) whose limiting ODE has a unique, globally asymptotically stable equilibrium; and iii) that satisfy respectively the  assumption labelled HL (\cite{frikha2012concentration}) and  HLS$_\alpha$ (\cite{fathi2013transport}), amongst others. Both HL and its weaker variant HLS$_\alpha$ are strong assumptions; for e.g., they do not hold for the simple yet popular TD(0) method with linear function approximation from reinforcement learning \cite{dalal2018finite1}. Under these settings, their results give unconditional convergence rates. This is possible because of the unique equilibrium hypothesis; we shall discuss this issue further in Section~\ref{sec:Discussion}.

We now formally describe our setup and our key result in this paper. We consider the $d-$dimensional SA method
\begin{equation}
\label{eqn:SAIterates}
x_{n + 1} = x_{n} + a_n [h(x_n) +  M_{n + 1}], \; n \geq 0,
\end{equation}
where $\{a_n\}$ denotes a real valued stepsize sequence, $h : \mathbb{R}^d \to \mathbb{R}^d$ denotes a deterministic map, and $\{M_n\}$ denotes some $d-$dimensional noise sequence. The terms inside the square bracket represent the noisy measurement; i.e., one can only access the sum but not the individual terms separately.

Treating \eqref{eqn:SAIterates} as a noisy Euler scheme, the limiting ODE which this algorithm might be expected to track is
\begin{equation}
\label{eqn:LimitingODE}
\dot{x}(t) = h(x(t)).
\end{equation}
Let $x^*$ be a LASE of \eqref{eqn:LimitingODE} such that $Dh(x^*)$ is Hurwitz and let $B$ be a  bounded set containing $x^*$ and contained in the domain of attraction of $x^*.$ Let $\|\cdot\|$ denote the usual Euclidean norm for vectors and matrices. Let $\bar{x}(t)$ denote the continuous time version of \eqref{eqn:SAIterates} obtained via linear interpolation. That is, let $t_0 = 0$ and, for each $n \geq 0,$ set $t_{n + 1} = t_{n} + a_{n}$ and $\bar{x}(t_n) = x_n.$ For $t \in (t_{n}, t_{n + 1}),$ let
\begin{equation}
\label{eqn:LinearInterpolation}
\bar{x}(t) = \bar{x}(t_{n}) + \frac{(t - t_{n})}{a_{n}} [\bar{x}(t_{n + 1}) - \bar{x}(t_{n})].
\end{equation}
%

Let $n_0 \geq 0.$ Then given $\epsilon > 0$ and that the event $\{\bar{x}(t_{n_0}) \in B\}$ holds, our aim here is to obtain:
\begin{enumerate}
\item an estimate on the time $T,$ starting from $t_{n_0},$ that the SA method in \eqref{eqn:SAIterates} will take to hit the $\epsilon-$ball around $x^*;$ and

\item \label{itm:ConcBound} a lower bound on the probability that the SA method at time $t_{n_0} + T + 1$ is indeed inside the $\epsilon-$ball around $x^*$ and remains there thereafter, i.e.,
\begin{equation}
\label{eqn:SampleComplexity}
\Pr\{\|\bar{x}(t) - x^*\| \leq \epsilon \; \forall t \geq t_{n_0} + T + 1 \big| \bar{x}(t_{n_0}) \in B\}.
\end{equation}
\end{enumerate}
We assume the following throughout this paper.
{\renewcommand*\theenumi{$\pmb{A_\arabic{enumi}}$}
\begin{enumerate}
\item The map $h: \mathbb{R}^d \mapsto \mathbb{R}^d$ is $\mathcal{C}^2$ (twice continuously differentiable).

\item Stepsizes $\{a_n\}$ are strictly positive real numbers satisfying
\begin{equation}
\label{eqn:StepsizeAssumption}
\sum_{n} a_{n} = \infty,
\end{equation}
\[
\lim_{n \to \infty}a_{n} = 0,
\]
and
\begin{equation}
\label{eqn:StepsizeAssumption3}
\sup_{n} a_n \leq 1.
\end{equation}

\item The noise sequence $\{M_n\}$ is a $\mathbb{R}^d$ valued martingale-difference sequence with respect to the increasing family of $\sigma-$fields
\[
\mathcal{F}_{n} := \sigma(x_0, M_1, \ldots, M_n), n \geq 0.
\]
That is,
\begin{equation}
\label{eqn:NoiseAssumption1}
\mathbb{E}[M_{n + 1} | \mathcal{F}_{n}] = 0 \;  \text{a.s.}, n \geq 0.
\end{equation}
Furthermore, there exist continuous functions $c_1, c_2 : \mathbb{R}^d \to \mathbb{R}_{++}$ (strictly positive) such that
\begin{equation}
\label{eqn:NoiseAssumption2}
\Pr\{\|M_{n + 1}\| > u | \mathcal{F}_{n}\} \leq c_1(x_n) e^{-c_2(x_n) u}, \; n \geq 0,
\end{equation}
for all $u \geq \ul,$ where $\ul$ is some sufficiently large but fixed number.

\item There exist $r, r_0,\epsilon_0 > 0$ so that $r > r_0$ and, for $0 < \epsilon \leq \epsilon_0,$
\[
\{x \in \mathbb{R}^d: \|x - x^*\| \leq \epsilon \} \subseteq B \subseteq V^{r_0} \subset \mathcal{N}_{\epsilon_0}(V^{r_0}) \subseteq V^r \subset \text{dom}(V),
\]
where $V$ is some Liapunov function\footnote{Recall that a continuously differentiable function $V: \text{dom}(V) \subseteq \mathbb{R}^d \to \mathbb{R}$ is said to be a Liapunov function with respect to $x^*$ if $V(x^*) = 0$ and, for all $x \neq x^*,$ $V(x) > 0$ and $\nabla V(x) \cdot h(x) < 0.$ The existence of a Liapunov function near $x^*$ is guaranteed due to its asymptotic stability  by the converse Liapunov theorem \cite{krasovskii1963stability}. We may in fact choose $V$ so that $V(x) \rightarrow \infty$ as $x \rightarrow$ the boundary of dom$(V)$ (see \textit{ibid.}).} defined near $x^*,$ $\text{dom}(V)$ is the domain of the function $V,$
\[
V^{r_0} := \{x \in \text{dom}(V) : V(x) \leq r_0\},
\]
$V^r$ is defined similarly to $V^{r_0}$ with $r$ replacing $r_0,$ and
\[
\mathcal{N}_{\epsilon_0}(V^{r_0}) := \{x \in \mathbb{R}^d : \exists y \in V^{r_0} \text{ so that } \|x - y\| \leq \epsilon_0 \}.
\]
\end{enumerate}
}

\begin{remark}
Unlike most existing SA works \cite{borkar2008stochastic, kamal2010convergence, frikha2012concentration, fathi2013transport} etc., we do not require that the stepsize sequence $\{a_n\}$ satisfy the square summability condition, i.e., $\sum_{n} a_n^2 < \infty.$ Therefore, compared to these works, our analysis holds for larger choices of stepsizes; for e.g., $a_n = 1/(n + 1)^\mu,$ with $\mu \in (0, 1/2].$ As pointed out to us by an anonymous referee, similar slowly decaying stepsize sequences have appeared in \cite{polyak1992acceleration}; but there they appear only as part of the analysis for linear SA methods.
\end{remark}

\begin{remark}
\label{rem:AssumptionRestrictions}
We emphasize that the assumption $h$ is twice continuously differentiable globally is only for pedagogical convenience. Our results go through even if $h$ is twice continuously differentiable in some local neighbourhood of $x^*.$ Assumption \eqref{eqn:StepsizeAssumption3} is again for ease of notation. Our results with minor modifications can be obtained even without it.
\end{remark}

\noindent Let $Dh(x^*)$ be the Jacobian matrix of $h$ at $x^*$ and let $\lambda_1(x^*), \ldots, \lambda_d(x^*)$ denote its $d$ eigenvalues. Since $Dh(x^*)$ is Hurwitz, 
\begin{equation}
\label{eqn:Defn_LambdaMinx*}
\lambda_{\min}(x^*) := \min_i\{- \text{real}(\lambda_i(x^*))\}
\end{equation}
is a strictly positive number. Fix $\lambda^\prime$ such that $0 < \lambda^\prime < \lambda_{\min}(x^*)$ and $\kappa$ such that $0 < \kappa < 1.$ From \cite[Corollary 3.6, p71]{teschl2012ordinary}, there exists $\tilde{K} > 0$ such that\footnote{\label{fn:splCase} In the special case that $Dh(x^*)$ is symmetric and hence diagonalizable, $\tilde{K}$ and $\lambda^\prime$ can be chosen to be $1$ and $\lambda_{\min}(x^*),$ respectively.}
\begin{equation}
\label{eqn:BoundNormDhx*}
\|e^{Dh(x^*)t}\| \leq \tilde{K} e^{-\lambda^{\prime} t}, \; t \geq 0.
\end{equation}
Without loss of generality, we will assume that $\tilde{K} \geq 1.$ Let
\begin{equation}
\label{eqn:Defn_lambda}
\lambda = \left(\frac{1 - \kappa}{\tilde{K}^2} \right) \lambda^\prime.
\end{equation}
Clearly $\lambda < \lambda^\prime.$ The main result of this paper is the following.

\begin{theorem}
\label{thm:MainResult}
Suppose $\pmb{A}_1, \ldots, \pmb{A}_4$ hold. For each $n,$ let
\[
\beta_n := \max\limits_{n_0 \leq k \leq n - 1} \left[e^{-\lambda \sum_{i = k + 1}^{n - 1}a_i} \right] a_k.
\]
Let $\epsilon \in (0, \epsilon_0],$ where $\epsilon_0$ is as in $\pmb{A}_4.$ Then there exist constants $C_1, C_2 > 0$ and functions $g_1(\epsilon) = O\left(\log\left[\frac{1}{\epsilon}\right]\right)$ and $g_2(\epsilon) = O\left(\frac{1}{\epsilon}\right)$ so that whenever $T \geq g_1(\epsilon)$ and $n_0 \geq N,$ where $N$ is such that $1/a_n \geq g_2(\epsilon)$ $\forall n \geq N,$ the SA iterates of \eqref{eqn:SAIterates} satisfy:
\begin{multline*}
\Pr\{\|\bar{x}(t) - x^*\| \leq \epsilon \; \forall t \geq t_{n_0} + T + 1\big| \bar{x}(t_{n_0}) \in B\} \geq \\ 1 - \sum_{n = n_0}^{\infty} C_1 \exp\left(- \frac{C_2 \sqrt{\epsilon}}{\sqrt{a_n}}\right) - \sum_{n = n_0}^{\infty} C_1 \exp\left(-\frac{C_2 \min\{\epsilon, \epsilon^2\}}{\beta_{n}}\right).
\end{multline*}
Here the constants $C_1, C_2$ as also the hidden constants in $g_1, g_2$ depend only on $\lambda, d, r,$ and $\ul$.
\end{theorem}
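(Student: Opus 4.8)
The plan is to regard the linearly interpolated iterate $\bar x(\cdot)$ from \eqref{eqn:LinearInterpolation} as the solution of a perturbation of the limiting ODE \eqref{eqn:LimitingODE} and to compare it, via Alekseev's formula, with the genuine ODE trajectory started from $\bar x(t_{n_0})$. Write $\phi(t,s,y)$ for the value at time $t$ of the solution of $\dot x = h(x)$ equal to $y$ at time $s$, and $\Phi(t,s,y):=\partial\phi/\partial y\,(t,s,y)$ for its sensitivity matrix, which exists because $h\in\mathcal C^2$. On each interval $(t_n,t_{n+1})$ the interpolation obeys $\dot{\bar x}(t)=h(x_n)+M_{n+1}=h(\bar x(t))+\big(h(x_n)-h(\bar x(t))\big)+M_{n+1}$, so $\bar x$ solves $\dot y=h(y)+p(t)$ with the piecewise perturbation $p(t)=h(x_n)-h(\bar x(t))+M_{n+1}$ on $(t_n,t_{n+1})$. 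Alekseev's formula then gives, for $m\ge 1$,
\[
\bar x(t_{n_0+m})=\phi\big(t_{n_0+m},t_{n_0},\bar x(t_{n_0})\big)+\sum_{n=n_0}^{n_0+m-1}\int_{t_n}^{t_{n+1}}\Phi\big(t_{n_0+m},\tau,\bar x(\tau)\big)\big[h(x_n)-h(\bar x(\tau))+M_{n+1}\big]\,d\tau ,
\]
and analogously with any $t\ge t_{n_0}$ in place of $t_{n_0+m}$ after one more interpolation. Everything reduces to controlling the two terms on the right.

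For the deterministic term: since $\bar x(t_{n_0})\in B$ sits in the domain of attraction of $x^*$ and $V$ decays along ODE solutions, $\phi(t,t_{n_0},\bar x(t_{n_0}))\to x^*$ and, once inside a small neighbourhood of $x^*$, the decay is exponential at a rate controlled by $\lambda_{\min}(x^*)$; hence there is $g_1(\epsilon)=O(\log(1/\epsilon))$ with $\|\phi(t,t_{n_0},\bar x(t_{n_0}))-x^*\|\le\epsilon/4$ for all $t\ge t_{n_0}+g_1(\epsilon)$, which yields the hitting-time part of the statement. For the sensitivity matrix, along a trajectory confined to a small ball $B_\rho(x^*)$ the variational equation $\dot\Phi=Dh(\mathrm{traj})\,\Phi$ is a small perturbation of $\dot\Phi=Dh(x^*)\Phi$, and \eqref{eqn:BoundNormDhx*} together with the roughness of exponential decay (one Gronwall estimate, absorbing exactly the $\tilde K^2$ and $(1-\kappa)$ factors of the definition \eqref{eqn:Defn_lambda} of $\lambda$) gives $\|\Phi(t,\tau,y)\|\le\tilde K e^{-\lambda(t-\tau)}$ uniformly over such trajectories once $\rho$ is small. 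Because this estimate is valid only while $\bar x$ stays near $x^*$, I would introduce the stopping time $\sigma:=\inf\{t\ge t_{n_0}:\bar x(t)\notin\mathcal N_{\epsilon_0}(V^{r_0})\}$, perform all estimates on $\{t\le\sigma\}$, and at the end show that with high probability $\sigma=\infty$ and $\bar x(t)$ enters and never leaves the $\epsilon$-ball.

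Next, on $\{t\le\sigma\}$ split the Alekseev remainder into (i) the martingale part $\sum_n\big(\int_{t_n}^{t_{n+1}}\Phi(t,\tau,x_n)\,d\tau\big)M_{n+1}$, (ii) the discretization error $\sum_n\int_{t_n}^{t_{n+1}}\Phi(t,\tau,\bar x(\tau))\big(h(x_n)-h(\bar x(\tau))\big)d\tau$, and (iii) the replacement error $\sum_n\int_{t_n}^{t_{n+1}}\big(\Phi(t,\tau,\bar x(\tau))-\Phi(t,\tau,x_n)\big)M_{n+1}\,d\tau$. In (i) the matrices $\Psi_{n,t}:=\int_{t_n}^{t_{n+1}}\Phi(t,\tau,x_n)\,d\tau$ are $\mathcal F_n$-measurable with $\|\Psi_{n,t}\|\le\tilde K a_n e^{-\lambda(t-t_{n+1})}$, so $\sum_n\Psi_{n,t}M_{n+1}$ is a sum of $\mathbb R^d$-valued martingale differences with conditionally sub-exponential increments; Theorem~\ref{thm:ConcMartingalesMultivariate}, with ``max'' scale $\max_{n_0\le k\le n-1}e^{-\lambda\sum_{i=k+1}^{n-1}a_i}a_k=\beta_n$ and ``variance'' scale bounded by a constant times $\sup_{k\ge n_0}a_k\cdot\sum_k a_k e^{-\lambda(\cdot)}\lesssim\beta_n$, gives the Bernstein-type tail $C_1\exp(-C_2\min\{\delta^2,\delta\}/\beta_n)$; taking $\delta\asymp\epsilon$ produces precisely the $\exp(-C_2\epsilon^2/\beta_n)$ term when $\epsilon\le 1$ and the $\exp(-C_2\epsilon/\beta_n)$ term when $\epsilon>1$. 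For (ii), the interpolation bound $\|\bar x(\tau)-x_n\|\le a_n\|h(x_n)+M_{n+1}\|$ and local Lipschitzness of $h$ on $\mathcal N_{\epsilon_0}(V^{r_0})$ make the $n$-th integrand of order $\tilde K L\,e^{-\lambda(t-t_n)}a_n^2\big(\|h(x_n)\|+\|M_{n+1}\|\big)$, summing to $O(\sup_{k\ge n_0}a_k)$ times a bounded deterministic quantity plus a sub-exponential remainder. For (iii), this is where $\mathcal C^2$ is indispensable: $\Phi(t,\tau,\cdot)$ is Lipschitz uniformly on $B_\rho(x^*)$, so the $n$-th integrand is of order $e^{-\lambda(t-t_n)}a_n^2\|M_{n+1}\|\big(\|h(x_n)\|+\|M_{n+1}\|\big)$; to make $\sum_n$ of the dominant piece $e^{-\lambda(t-t_n)}a_n^2\|M_{n+1}\|^2$ at most $\epsilon/4$ it suffices that $a_n\|M_{n+1}\|^2\le c\epsilon$ for all $n\ge n_0$, and $\Pr\{\|M_{n+1}\|>\sqrt{c\epsilon/a_n}\mid\mathcal F_n\}\le c_1 e^{-c_2\sqrt{c\epsilon/a_n}}$, which after summation in $n$ yields the $\exp(-C_2\sqrt\epsilon/\sqrt{a_n})$ series; the lower-order pieces carrying a single $\|M_{n+1}\|$ contribute $\exp(-c_2\epsilon/a_n)$ terms, dominated by the former once $a_n$ is small.

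Finally, pick $N$ so that $\sup_{n\ge N}a_n$ is small enough that the deterministic residues from (ii) (and any remaining deterministic error) are below $\epsilon/4$; this forces $1/a_n\ge g_2(\epsilon)$ with $g_2(\epsilon)=O(1/\epsilon)$. A union bound over $n\ge n_0$ then shows that the events on which the stochastic parts of (i), (ii) and (iii) over $[t_{n_0},t]$ exceed their budgets have probabilities summing to the two series in the statement — all sums converge thanks to the $e^{-\lambda(\cdot)}$ weights and are uniform in $t$ — and off these events $\|\bar x(t)-x^*\|\le\epsilon$ for every $t\ge t_{n_0}+T+1$; in particular $\bar x$ never leaves $\mathcal N_{\epsilon_0}(V^{r_0})$, so the truncation at $\sigma$ was vacuous there. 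The main obstacle is exactly the circular dependence in the middle steps: the decay bound $\|\Phi(t,\tau,y)\|\le\tilde K e^{-\lambda(t-\tau)}$, which drives the whole argument, holds only while the perturbed trajectory stays near $x^*$, yet staying near $x^*$ is what one is trying to prove — so one must run the estimates against the stopping time $\sigma$, choose the neighbourhood radius $\rho$, the error budgets and $N$ simultaneously, and keep the three error buckets from feeding back into one another.
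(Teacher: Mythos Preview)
Your proposal is correct and mirrors the paper's proof closely: the Alekseev decomposition into what the paper calls $W_n$, $S_n$ and $\tilde S_n-S_n$ matches your (ii), (i) and (iii) respectively, the martingale concentration (Theorem~\ref{thm:ConcMartingalesMultivariate}) handles (i) exactly as you say, and the tail of $a_n\|M_{n+1}\|^2$ produces the $\exp(-C_2\sqrt\epsilon/\sqrt{a_n})$ series. The only cosmetic differences are that the paper localizes via the discrete events $G_n=\{\bar x(t)\in V^r\ \forall t\in[t_{n_0},t_n]\}$ and a chaining lemma rather than a stopping time, and it proves the $\Phi$-decay estimate on all of $V^r$ (Lemma~\ref{lem:BoundPhi}) by a transient-Gronwall-plus-asymptotic argument, so the bound does not require $\bar x$ to already be near $x^*$ --- this largely dissolves the circularity you flag at the end.
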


The next result obtains order estimates for our concentration bound, for the common stepsize family $a_n = 1/(n + 1)^\mu, \mu \in (0,1].$

\begin{theorem}
\label{thm:OrderEst}
Let $a_n = 1/(n + 1)^\mu,$ $\mu \in (0, 1].$ With notations as in Theorem~\ref{thm:MainResult}, keeping everything else fixed and treating only $n_0$ as a variable,
\[
\Pr\{\|\bar{x}(t) - x^*\| \leq \epsilon \; \forall t \geq t_{n_0} + T + 1 \big| \bar{x}(t_{n_0}) \in B\} = 1 - O(n_0^{1 - \mu/2} e^{- C n_0^{\mu/2}})
\]
for some constant $C > 0.$ Here $O$ denotes the standard Big O order notation.
\end{theorem}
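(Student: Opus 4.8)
The strategy is to substitute $a_n = 1/(n+1)^\mu$ into Theorem~\ref{thm:MainResult} and then carry out an asymptotic analysis, in $n_0$, of the two tail series that bound $1-\Pr\{\cdots\}$. Since $\epsilon$ (and $T,\lambda,d,r,\ul$) is held fixed, exactly one of the two cases of Theorem~\ref{thm:MainResult} is in force and the two arguments are identical, so I treat $\epsilon\le 1$ (for $\epsilon>1$ simply replace $\epsilon^2$ by $\epsilon$ below, which only shrinks the series). Put $c_1:=C_2\sqrt\epsilon>0$ and $c_2:=C_2\epsilon^2>0$; these are genuine constants because $C_1,C_2$ depend only on $\lambda,d,r,\ul$. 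Since $1/\sqrt{a_n}=(n+1)^{\mu/2}$, it suffices to show, for $n_0$ larger than a fixed threshold (absorbing the $N$ of Theorem~\ref{thm:MainResult} and a $\mu,\lambda$-dependent threshold introduced below), that
\[
S_1(n_0):=\sum_{n=n_0}^{\infty}C_1 e^{-c_1(n+1)^{\mu/2}}
\qquad\text{and}\qquad
S_2(n_0):=\sum_{n=n_0}^{\infty}C_1 e^{-c_2/\beta_n}
\]
satisfy $S_1(n_0)=O\!\big(n_0^{1-\mu/2}e^{-c_1 n_0^{\mu/2}}\big)$ and $S_2(n_0)=o\!\big(n_0^{1-\mu/2}e^{-c_1 n_0^{\mu/2}}\big)$; the claim then follows, with $C:=c_1$, from $1-\Pr\{\cdots\}\le S_1(n_0)+S_2(n_0)$ together with $\Pr\{\cdots\}\le 1$.

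For $S_1$ I compare with an integral: the summand decreases in $n$, so $S_1(n_0)\le C_1\int_{n_0}^{\infty}e^{-c_1 y^{\mu/2}}\,dy$, and the substitution $u=c_1 y^{\mu/2}$ turns this into a constant multiple of the upper incomplete Gamma function $\Gamma\!\big(2/\mu,\,c_1 n_0^{\mu/2}\big)$. One integration by parts gives $\Gamma(s,x)=x^{s-1}e^{-x}\big(1+O(1/x)\big)$ as $x\to\infty$, whence $S_1(n_0)=\Theta\!\big(n_0^{1-\mu/2}e^{-c_1 n_0^{\mu/2}}\big)$ — in particular the required upper bound, and this is the term that produces the stated order $n_0^{1-\mu/2}e^{-Cn_0^{\mu/2}}$ with $C=C_2\sqrt\epsilon$.

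For $S_2$ the crux is to locate $\beta_n=\max_{n_0\le k\le n-1}\psi(k)$, where $\psi(k):=e^{-\lambda\sum_{i=k+1}^{n-1}a_i}a_k$. A one-line computation gives $\log\!\big(\psi(k+1)/\psi(k)\big)=\lambda(k+2)^{-\mu}-\mu\ln\frac{k+2}{k+1}$; combined with $\frac{x}{1+x}\le\ln(1+x)\le x$ this yields a clean dichotomy. (i) If $\mu<1$, or if $\mu=1$ and $\lambda>1$, then $\psi$ is nondecreasing once $k$ exceeds a threshold depending only on $\mu,\lambda$, so for such $n_0$ the maximum is attained at $k=n-1$ and $\beta_n=a_{n-1}$; thus $1/\beta_n=n^{\mu}$ if $\mu<1$ and $1/\beta_n=n$ if $\mu=1$, and the same integral/incomplete-Gamma estimate as for $S_1$ gives $S_2(n_0)=O\!\big(n_0^{1-\mu}e^{-c'n_0^{\mu}}\big)$, respectively $O(e^{-c'n_0})$, for a suitable constant $c'>0$. (ii) If $\mu=1$ and $\lambda\le1$, then $\psi$ is nonincreasing for every $k$, so $\beta_n=\psi(n_0)=e^{-\lambda\sum_{i=n_0+1}^{n-1}a_i}a_{n_0}$; bounding $\sum_{i=n_0+1}^{n-1}1/(i+1)$ below by $\ln\frac{n+1}{n_0+2}$ gives $1/\beta_n\ge\tfrac12(n+1)^{\lambda}(n_0+1)^{1-\lambda}\ge\tfrac12(n_0+1)$ for all $n\ge n_0$, and splitting the sum at $n=2n_0$ — the exponent grows at least linearly in $n-n_0$ on $[n_0,2n_0]$ and at least like $\tfrac{c_2}{4}(n+1)$ beyond — yields $S_2(n_0)=O(e^{-c_2 n_0/2})$. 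In every case the exponent in $S_2$ carries a power of $n_0$ strictly larger than $\mu/2$, or is linear in $n_0$, so $S_2(n_0)=o\!\big(n_0^{1-\mu/2}e^{-c_1 n_0^{\mu/2}}\big)$, and hence $S_1(n_0)+S_2(n_0)=O\!\big(n_0^{1-\mu/2}e^{-Cn_0^{\mu/2}}\big)$, which is the assertion.

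The main obstacle is the $\beta_n$ analysis: one has to recognise that the maximiser in the definition of $\beta_n$ never lies in the interior of $\{n_0,\dots,n-1\}$ for $n_0$ large, and that whether it is the right endpoint ($\beta_n=a_{n-1}$) or the left endpoint ($\beta_n=\psi(n_0)$) is decided by comparing $\lambda$ with the stepsize exponent $\mu$. The delicate regime is $\mu=1$ with small $\lambda$, where $\beta_n$ genuinely depends on $n_0$; there one must notice that the factor $(n_0+1)^{1-\lambda}$ it carries already forces $1/\beta_n\gtrsim n_0$, so that $S_2$ remains negligible against $S_1$ even when $\lambda<1/2=\mu/2$. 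Everything else — the two incomplete-Gamma estimates and the bookkeeping of constants — is routine.
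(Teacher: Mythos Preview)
Your proposal is correct and follows essentially the same route as the paper: the paper likewise bounds the first series by an integral comparison (using l'H\^{o}pital in place of your incomplete-Gamma asymptotic) to obtain the dominant $O(n_0^{1-\mu/2}e^{-Cn_0^{\mu/2}})$ term, and handles the $\beta_n$ series through the identical monotonicity dichotomy for $\psi(k)$ --- $\beta_n=a_{n-1}$ when $\mu<1$ or $\mu=1,\lambda>1$, and $\beta_n=\psi(n_0)$ when $\mu=1,\lambda\le1$ --- with the same lower bound on $\sum_{i=n_0+1}^{n-1}a_i$ in the last case. The only cosmetic difference is that the paper, in the $\mu=1,\lambda\le1$ regime, uses a change of variables rather than your split at $n=2n_0$ (which, strictly, produces an extra harmless polynomial factor you omitted).
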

\begin{proof}
See Section~\ref{sec:OrderEstimates} in Appendix.
\end{proof}

Some notable aspects of Theorem~\ref{thm:MainResult} are as follows.
\begin{itemize}
\item It is a \textit{local} result, i.e., it gives a bound on the probability of convergence to a LASE if the iterates land up in its domain of attraction eventually. This is the so called lock-in probability \cite{arthur1994increasing}. In particular, $\{x^*\}$ need not be the only attractor of \eqref{eqn:LimitingODE}.

\item Letting $\mathcal{A}(n)$ denote complement of the event whose conditional probability appears in the statement, we have a bound of the form
\[
\Pr\{\mathcal{A}(n_0) | x_{n_0} \in B\} \leq c(n_0)
\]
for a suitably defined $c(n_0)$ satisfying $\sum_nc(n) < \infty$. Therefore,
\[
\sum_n\Pr\{\mathcal{A}(n) | x_n \in B\}I\{x_n \in B\} < \infty \ \mbox{a.s.},
\]
where $I$ denotes the indicator function. Consequently, by \cite[Corollary 5.29, p.\ 96]{breiman1992probability}, we have
\[
\sum_n I\{\mathcal{A}(n), x_n \in B\} < \infty \ \mbox{a.s.}
\]
In particular, this implies that $x_n \rightarrow x^*$ a.s. on the set $\{x_n \in B \; \mbox{i.o.}\}$. Thus we recover the celebrated Kushner-Clark lemma \cite{kushner2012stochastic} under the weaker hypothesis $a_n \to 0$ replacing the usual condition $\sum_n a_n^2 < \infty.$
\end{itemize}

There is also one key limitation to this result. The concentration bound is conditional on the event $\{\bar{x}(t_{n_0}) \in B\}.$ Thus in order to \textit{drive} the iterates to a prescribed equilibrium, one will need to separately ensure that the $n_0-$th iterate is indeed within the set $B.$ A related issue is to estimate the \textit{unconditional} probability of convergence to a prescribed equilibrium. This requires an estimate of the probability of reaching the domain of attraction of the prescribed equilibrium from a given starting point. We discuss this in Section \ref{sec:Discussion}. Three artificial ways to fix this are as follows. First, forcefully project the SA method back onto the set $B$ whenever the method leaves it; see \cite{dalal2018finite2} for recent advances in this direction. Second, pick an initial point within $B$ and scale-down the entire stepsize sequence so that $N$ given in Theorem~\ref{thm:MainResult} equals $0.$ Third, use additional additive, extraneous noise to `explore' the space, along with an oracle that tells you when you are in $B$. All these fixes are non-trivial as these require explicit a priori knowledge of the desired equilibrium (e.g., a global minimum of some function) because often (e.g., in engineering applications) that is precisely what the algorithm is expected to discover. Also, the set $B$ is often unavailable or hard to estimate well even when the desired equilibrium is known.  Separately, the second fix needs bounds on $g_2(\epsilon)$ which, as we shall see, depends on a priori unknown parameters such as the smallest eigenvalue of $Dh(x^*),$ amongst others. But whenever such estimates can be obtained, the following restatement of Theorem~\ref{thm:MainResult} may be useful.

\begin{corollary}
Suppose $\pmb{A}_1, \ldots, \pmb{A}_4$ hold. Let $\epsilon \in (0, \epsilon_0].$ Let $\beta_n,$ $T,$ and $g_2(\epsilon)$ be as in Theorem~\ref{thm:MainResult}. Suppose that $1/a_n  \geq g_2(\epsilon)$ $\forall n \geq 0$ and that $\bar{x}(0) = x_0$ lies in $B.$ Then the following relation holds:
\begin{multline*}
\Pr\{\|\bar{x}(t) - x^*\| \leq \epsilon \; \forall t \geq T + 1\} \geq \\ 1 - \sum_{n = 0}^{\infty} C_1 \exp\left(- \frac{C_2 \sqrt{\epsilon}}{\sqrt{a_n}}\right) - \sum_{n = 0}^{\infty} C_1 \exp\left(-\frac{C_2 \min\{\epsilon, \epsilon^2\}}{\beta_{n}}\right),
\end{multline*}
Here $C_1, C_2 > 0$ are constants as in Theorem~\ref{thm:MainResult}.
\end{corollary}

The rest of the paper is organized as follows. In the next section, we give a comparison of our main result with existing works. This section may be skipped at the first reading. In the following section, we do some preliminary computations and get an intermediate lower bound on \eqref{eqn:SampleComplexity} which will be easier to work with. We also give an overview of our proof technique for Theorem~\ref{thm:MainResult}. In Section~\ref{sec:AlternativeViaAlekseev}, we first give  Alekseev's formula. Using this, we then derive an alternative but equivalent expression for $\bar{x}(t)$ and, in particular, for $\bar{x}(t_n).$ In Section~\ref{sec:ErrorBound}, we use this alternative expression to obtain a bound on $\|\bar{x}(t_{n + 1})- x^*\|$ in terms of the noise sequence $\{M_n\}.$ In Section~\ref{sec:MainProof}, we finally prove our main result, i.e., Theorem~\ref{thm:MainResult}, via a series of Lemmas. This section needs a generalization of a concentration result from \cite{liu2009exponential}, which we prove separately as Theorem~\ref{thm:ConcMartingalesMultivariate} in the Appendix. We conclude with a brief discussion in Section~\ref{sec:Discussion}.

\section{Comparison with existing results}
\label{sec:Comparison}

We first compare our key result (Theorem~\ref{thm:MainResult}) with \cite[Chapter 4, Corollary 14]{borkar2008stochastic} and \cite[Theorem 12]{kamal2010convergence}. Those results give concentration bounds for nonlinear SA methods with respect to generic attractors. Replacing the generic attractor with a LASE, the results there are in a form directly comparable to our result. Let $B$ and $x^*$ be as in Theorem~\ref{thm:MainResult}. Those works first give an estimate on the additional time $T^\prime$ required to hit a suitably defined $\epsilon-$neighbourhood of $x^*$ starting from $B.$ This estimate is same in both those works. Then, an estimate is provided of the probability that after a passage of time $T^\prime$ the iterates are within the aforementioned $\epsilon-$neighbourhood and remain there thereafter, conditional on $\{x_{n_0} \in B\}.$ The estimate on $T^\prime$ is very loose; unlike our bound, it does not exploit the exponentially fast convergence rate of a ODE solution near a LASE. We compare their concentration bounds separately below.

There are two parts to \cite[Chapter 4, Corollary 14]{borkar2008stochastic}. The first part assumes
\begin{equation}
\label{eqn:hAssumptionBorkarKamal}
h\text{ is globally Lipschitz continuous},
\end{equation}
the stepsize sequence $\{a_n\}$ satisfies
\begin{equation}
\label{eqn:StepsizeAssumptionBorkarKamal}
\sum_{n = 0}^{\infty}a_{n} = \infty \text{ and } \sum_{n = 0}^{\infty}a_{n}^2 < \infty,
\end{equation}
and the martingale-difference noise sequence $\{M_n\}$ satisfies
\[
\mathbb{E}\left[\frac{\|M_{n+ 1}\|^2}{1 + \|x_n\|^2}\bigg| \mathcal{F}_n\right] \leq C_1
\]
for some constant $C_1 \geq 0.$ Under these assumptions, as $n_0 \to \infty$ with everything else fixed, it is shown that the concentration bound, defined above, is $1 - O(b_{n_0}/ \delta^2).$ Here $b_{n_0} := \sum_{n \geq n_0} a_{n}^2$ and $\delta$ is some constant depending on $\epsilon.$ For $a_n = 1/(n + 1)^\mu,$ $\mu \in (1/2, 1],$ it can be easily seen that the above concentration bound is $1 - O(1/n_0^{2\mu - 1}).$

In the second part of \cite[Chapter 4, Corollary 14]{borkar2008stochastic}, the assumptions on $h$ and $\{a_n\}$ are same as above. The difference is in the assumption on $\{M_n\}.$ It is assumed there that $\{M_n\}$ is a martingale-difference sequence such that
\begin{equation}
\label{eqn:NoiseAssumptionBorkar}
\frac{\|M_{n + 1}\|}{1 + \|x_n\|} \leq C_1, \;  n \geq 0, \text{ a.s.}
\end{equation}
for some constant $C_1 \geq 0.$ Under these assumptions, a concentration bound of $1 - O\left(\exp[- C_2\delta^2 /b_{n_0}]\right)$ is obtained. Here $C_2 > 0$ is another constant and $\delta, b_{n_0}$ are as above. For $a_n = 1/(n + 1)^\mu,$ $\mu \in (1/2, 1],$ this concentration bound is $1 - O(e^{-C_3 n_0^{2\mu - 1}})$ for some constant $C_3 > 0.$ Clearly the concentration bound in the second part is tighter. But the bounded noise assumption of \eqref{eqn:NoiseAssumptionBorkar} is restrictive and does not hold true in general (this setting is however very useful for many reinforcement learning problems).

The result of \cite[Theorem 12]{kamal2010convergence} significantly improves on this. In addition to \eqref{eqn:hAssumptionBorkarKamal} and \eqref{eqn:StepsizeAssumptionBorkarKamal}, it is only assumed there that $\{M_n\}$ is a martingale-difference sequence and that, for some constants $C_1, C_2 > 0,$
\[
\Pr\left\{\frac{\|M_{n + 1}\|}{1 + \|x_n\|}  > u \bigg| \mathcal{F}_n\right\} \leq C_1 \exp(-C_2 u), \; n \geq 0,
\]
for all sufficiently large $u.$ Under these assumptions, it is shown there that \eqref{eqn:SampleComplexity}, with $T$ replaced by $T^\prime,$ is $1 - O\left(\exp[-C_3\delta^{2/3}/\sqrt[4]{b_{n_0}}]\right)$ for some constant $C_3 > 0$ and $\delta, b_{n_0}$ as above. For $a_n = 1/(n + 1)^\mu,$ $\mu \in (1/2, 1],$ it is easy to see that this bound translates to $1 - O(e^{-C_4 n_0^{\mu/2 - 1/4}})$ for some constant $C_4 > 0.$ Compared to the second part of \cite[Chapter 4, Corollary 14]{borkar2008stochastic}, this bound is weaker but it also has a similar exponential behaviour in $n_0.$

Our result, i.e., Theorem~\ref{thm:MainResult} of this paper, significantly improves upon the above two results. First, we do not need that the stepsize sequence satisfy the square summability condition $\sum_{n = 0}^{\infty}a_n^2 < \infty;$ instead, we only require that $a_n \to 0$ (in addition to $\sum_{n = 0}^{\infty} a_n = \infty).$ Thus, our result holds even for stepsizes such as $a_n = 1/(n + 1)^\mu,$ with $\mu \in (0, 1/2],$ while the previous two do not. Second, we only require that the noise sequence $\{M_n\}$ satisfy $\pmb{A}_3.$ This is weaker than the assumption on $\{M_n\}$ made in \cite[Theorem 12]{kamal2010convergence} (and hence in the second part of \cite[Chapter 4, Corollary 14]{borkar2008stochastic}). Third, despite the weaker assumptions, a direct comparison of our concentration bound for $a_{n} = 1/(n + 1)^\mu,$ $\mu \in (1/2, 1]$ (see Theorem~\ref{thm:OrderEst}), shows that our bound betters that in \cite[Theorem 12]{kamal2010convergence} for all $\mu \in (1/2, 1]$ and the one in the second part of \cite[Chapter 4, Corollary 14]{borkar2008stochastic} for $\mu \in (1/2, 2/3).$ Lastly, by exploiting the exponential convergence of a ODE solution near its attractor, we obtain tighter estimates for the time $T$ required to hit the $\epsilon-$ball around $x^*$ starting from the neighbourhood $B.$ We do, however, require a stronger regularity of the function $h$, viz., twice continuous differentiability, at least locally near the equilibrium. A brief summary of the above comparison is given in Table~\ref{tab:compAnalysis}.

\begin{table}[t]
\centering
\hspace{50ex}\caption{\label{tab:compAnalysis} Comparison of Theorem~\ref{thm:MainResult} with relevant results from literature.}
\begin{tabular}{ c | c | c | c}
\hline
Id & \begin{tabular}{@{}c@{}}Assumption \\ on $h:$\end{tabular} & \begin{tabular}{@{}c@{}} Assumption \\ on $\{M_n\}:$ \\  \eqref{eqn:NoiseAssumption1} and \end{tabular} & \begin{tabular}{@{}c@{}} Concentration \\ Bound: \end{tabular}\\[1ex]
\hline
& & &\\
$B_1$ & \begin{tabular}{@{}l@{}} Lipschitz \\ continuous \end{tabular}  & $\mathbb{E}\left[\dfrac{\|M_{n + 1}\|^2}{1 + \|x_n\|^2} \bigg| \mathcal{F}_n\right]\leq C_1 $ & $1 - O\left(\frac{1}{n_0^{2\mu  - 1}}\right)$\\[8ex]
$B_2$ & \begin{tabular}{@{}l@{}} Lipschitz \\ continuous \end{tabular} & $\dfrac{\|M_{n + 1}\|}{1 + \|x_n\|} \leq C_1$ & $1 - O\left(e^{- C_3 n_0^{2 \mu - 1}}\right)$ \\[8ex]
$K$ & \begin{tabular}{@{}l@{}} Lipschitz \\ continuous \end{tabular} & \begin{tabular}{@{}l@{}} $\Pr\left\{\dfrac{\|M_{n + 1}\|}{1 + \|x_n\|} > u \bigg| \mathcal{F}_{n}\right\}$ \\[2ex] $\leq C_1 e^{-C_2 u}$ \\[0.25ex] $\forall u$ sufficiently large \end{tabular} & $1 - O\left(e^{- C_4 n_0^{(2\mu - 1)/4}}\right)$ \\[10ex]
$*$ & $\mathcal{C}^2$ & \begin{tabular}{@{}l@{}}  $\Pr\{\|M_{n + 1}\| > u | \mathcal{F}_{n}\}$ \\[1ex] $\leq c_1(x_n) e^{-c_2(x_n) u}$ \\[0.25ex] $\forall u$ sufficiently large \end{tabular} & \begin{tabular}{@{}l@{}}  $1 - \tilde{O}(e^{-C n_0^{\mu/2}})$
\end{tabular}

\\
\hline
\end{tabular}\\[2ex]

$B_1$ and $B_2$ are respectively the first and second parts of \cite[Chapter 4, Corollary 14]{borkar2008stochastic}, $K$ is \cite[Theorem 12]{kamal2010convergence}, and $*$ is Theorem~\ref{thm:MainResult} from this paper. Each $C_i, C$  denotes a positive constant, $O$ is the Big O  notation, while $\tilde{O}$ is the Big O notation with polynomial terms hidden. The concentration bounds are obtained assuming $a_n = 1/(n + 1)^\mu$ with $\mu \in (1/2, 1]$ for the first three bounds, while $\mu \in (0, 1]$ for the last one.
\end{table}

The main reason why we obtain a tighter concentration bound in comparison to \cite[Chapter 4, Corollary 14]{borkar2008stochastic} and \cite[Theorem 12]{kamal2010convergence} is the following. In \cite{borkar2008stochastic,kamal2010convergence}, the analysis boils down to showing $\sum_{k = n_i}^{n} a_{k} M_{k + 1}$ is small in magnitude with high probability for all appropriately large $n_i$ and $n.$ In contrast, in the proof of our result, we only need to show that a term similar to $\sum_{k = n_0}^{n}e^{-\lambda [\sum_{i = k + 1}^{n} a_i]} a_{k} M_{k + 1},$ where $\lambda$ is as in \eqref{eqn:Defn_lambda}, is small for all large $n$ with high probability. This happens mainly due to the use of Alekseev's formula \cite{alekseev1961estimate} which allows us to exploit the local stability of the ODE near an attractor. Further, to show that the term similar to $\sum_{k = n_0}^{n}e^{-\lambda [\sum_{i = k + 1}^{n} a_i]} a_{k} M_{k + 1}$ is small, we make use of the concentration inequality given in Theorem~\ref{thm:ConcMartingalesMultivariate} in place of the Azuma-Hoeffding inequality as in \cite[Corollary 14]{borkar2008stochastic} and \cite[Theorem 12]{kamal2010convergence}.

Concentration bounds related to our work are also given in \cite[Theorem 2.2]{frikha2012concentration} and \cite[Corollary 2.9]{fathi2013transport}. However we discuss these separately since, as mentioned before, these results only apply to a restrictive class of SA methods and hold only under strong assumptions. Specifically, the SA algorithms considered there are of the form
\begin{equation}
\label{eqn:SAIteratesFrikha}
x_{n + 1} = x_{n} + a_{n} H(x_n, Y_{n + 1}),
\end{equation}
where: i.) $H: \mathbb{R}^d \times \mathbb{R}^d \to \mathbb{R}^d$ is a deterministic map satisfying the assumption labelled HL in \cite{frikha2012concentration} and  HLS$_\alpha$ in \cite{fathi2013transport}, amongst others; ii.) $\{a_n\}$ is a real valued step size sequence satisfying \eqref{eqn:StepsizeAssumptionBorkarKamal}; and iii.) $\{Y_{n}\}$ is a $\mathbb{R}^d$ valued sequence of IID random variables satisfying the Gaussian concentration property, i.e., there exist some $\alpha > 0$ so that for every $1-$Lipschitz function $f : \mathbb{R}^d \to \mathbb{R},$
\[
\mathbb{E}[\exp(\bar{\lambda} f(Y_1))] \leq \exp\left(\bar{\lambda} \mathbb{E}[f(Y_1)] + \frac{\alpha \bar{\lambda}^2}{4}\right), \; \bar{\lambda} \geq 0.
\]

By adding and subtracting $\mathbb{E}[H(x_n, Y_{n + 1})|\mathcal{F}_n],$ where $\mathcal{F}_n$ is the $\sigma-$field $\sigma(x_0, Y_1, \ldots, Y_n),$ it is easy to see that \eqref{eqn:SAIteratesFrikha} can be rewritten as in \eqref{eqn:SAIterates}; thereby showing that the above SA model is a special case of our SA model. Further, they assume that the limiting ODE has only one unique solution $x^*$; again a substantial simplification of the setup we consider. Both HL and  HLS$_\alpha$ relating to the growth of $H$ with respect to the second parameter are strong assumptions; in that they do not hold for the simple yet popular TD(0) method with linear function approximation \cite{dalal2018finite1}. As already discussed before, the square summability assumption on the stepsize is again stronger than ours. Lastly, since the Gaussian concentration property for $\{Y_n\}$ needs to hold true for every $1-$Lipshitz function $f,$ this requirement is also restrictive when compared with \eqref{eqn:NoiseAssumption2}. Under the restrictive settings and assumptions mentioned above, \cite{frikha2012concentration,fathi2013transport} obtain an upper bound on
\begin{equation}
\label{eqn:ConcentrationBoundFrikha}
\Pr\{\|x_n - x^*\| > \epsilon + \delta_n\}, \; n \geq 0,
\end{equation}
where $\delta_n := \mathbb{E}[\|x_n - x^*\|].$  There is a separate bound on $\delta_n$ which must be combined with the above to get the overall error bound.  Note that  \eqref{eqn:ConcentrationBoundFrikha} is unconditional, which is possible because of the strong assumptions and since there is a unique globally asymptotically stable equilibrium. Overall,  our concentration bound is of a similar flavor to the ones obtained in \cite{frikha2012concentration, fathi2013transport}.

\section{Preliminary computations}
\label{sec:Overview}

Henceforth, for $u_0 \in \mathbb{R}^d$ and $s \geq 0,$ we shall use $x(t, s, u_0),$ $t \geq s,$ to denote the solution of \eqref{eqn:LimitingODE} satisfying $x(s, s, u_0) = u_0.$ Suppose for the time being that $\bar{x}(t_{n_0}) \in B.$ Since from $\pmb{A_4},$ $B \subseteq V^{r_0}$ and $V$ is a Liapunov function, we have $x(t, t_{n_0}, \bar{x}(t_{n_0})) \in V^{r_0}$ for all $t \geq t_{n_0}.$ Further, if we wait long enough, then $x(t, t_{n_0}, \bar{x}(t_{n_0}))$ will reach a sufficiently close enough neighbourhood of $x^*$ and remain in it thereafter. Our idea to prove Theorem~\ref{thm:MainResult} is to show that with very high probability, conditional on $\{\bar{x}(t_{n_0}) \in B\},$ $\|\bar{x}(t) - x(t, t_{n_0}, \bar{x}(t_{n_0}))\|$ is small for all $t \geq t_{n_0}.$ Note that $\bar{x}(t)$ and $x(t, t_{n_0}, \bar{x}(t_{n_0}))$ start from the same point $\bar{x}(t_{n_0})$ at time $t = t_{n_0}.$ We elaborate more on our idea at the end of this section. But we first introduce some notations and come up with an intermediate lower bound on \eqref{eqn:SampleComplexity} which will be much easier to work with.

Fix some sufficiently large $n_0, T.$ We shall elaborate later on how large they ought to be. Pick $n_1 \equiv n_1(n_0)$ such that
\begin{equation}
\label{eqn:TReln1}
T \leq t_{n_1 + 1} - t_{n_0} = \sum_{n = n_0}^{n_1} a_n \leq T + 1.
\end{equation}
This can be done because \eqref{eqn:StepsizeAssumption} and \eqref{eqn:StepsizeAssumption3} hold. Let
\begin{equation}
\label{eqn:Defn_Rho}
\rho_{n + 1} := \sup_{t \in [t_n, t_{n + 1}]} \|\bar{x}(t) - x(t, t_{n_0}, \bar{x}(t_{n_0}))\|,
\end{equation}
\begin{equation}
\label{eqn:Defn_Rho*}
\rho_{n + 1}^{*} := \sup_{t \in [t_n, t_{n + 1}]} \|\bar{x}(t) - x^*\|,
\end{equation}
and
\begin{equation}
\label{eqn:Defn_G_n}
G_n := \{ \bar{x}(t) \in V^r \; \forall t \in [t_{n_0}, t_{n}]\}.
\end{equation}
Note that $G_n$ is an event and $G_{n_0} = \{\bar{x}(t_{n_0}) \in V^r\}.$

The desired intermediate lower bound on \eqref{eqn:SampleComplexity} is given below.

\begin{lemma}
For $n_0, n_1$ and $T$ that satisfy \eqref{eqn:TReln1},
\begin{multline}
\label{eqn:IntConcBound}
\Pr\{\|\bar{x}(t) - x^*\| \leq \epsilon \; \forall t \geq t_{n_0} + T + 1 \big| \bar{x}(t_{n_0}) \in B\}  \geq \\
1- \Pr\left\{\bigcup_{n = n_0}^{n_1} \{G_n,  \rho_{n + 1} > \epsilon\} \cup \bigcup_{n = n_1 + 1}^{\infty} \{G_n, \rho_{n + 1}^* > \epsilon\} \bigg| \bar{x}(t_{n_0}) \in B\right\}.
\end{multline}
\end{lemma}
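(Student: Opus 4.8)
The plan is to unwind the definitions and show that the complement event on the right-hand side already contains the "bad" event for the left-hand side. The key observation is that the conclusion $\|\bar x(t) - x^*\|\le\epsilon$ for all $t\ge t_{n_0}+T+1$ can fail only if, at some time on or after $t_{n_0}$, either the interpolated trajectory $\bar x(\cdot)$ drifts too far from the ODE solution $x(\cdot,t_{n_0},\bar x(t_{n_0}))$ before time $t_{n_1+1}$, or it drifts too far from $x^*$ itself after $t_{n_1+1}$. We want to package these two failure modes as the union appearing in \eqref{eqn:IntConcBound}, while carrying along the auxiliary confinement event $G_n$ so that the comparison stays inside $V^r$ where the Liapunov/stability machinery applies.

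First I would fix the constants so that the "long enough" statements are literal: choose $T$ (hence $n_1$, via \eqref{eqn:TReln1}) large enough that the ODE solution $x(t,t_{n_0},\bar x(t_{n_0}))$, which stays in $V^{r_0}$ for all $t\ge t_{n_0}$ because $B\subseteq V^{r_0}$ and $V$ is a Liapunov function, has entered and remains in the ball $\{x:\|x-x^*\|\le\epsilon/2\}$ for all $t\ge t_{n_1+1}$; this uses only local asymptotic stability of $x^*$ together with compactness of $V^{r_0}$, which gives a uniform entry time over all starting points in $B$. This is exactly where the $g_1(\epsilon)=O(\log(1/\epsilon))$ bound will eventually come from, but for the purposes of this lemma I would just invoke such a $T$ abstractly.

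Next comes the core set-inclusion argument. Suppose the event $\{\|\bar x(t)-x^*\|\le\epsilon\ \forall t\ge t_{n_0}+T+1\}$ fails; I want to show that then the event inside the $\Pr\{\cdot\}$ on the right-hand side of \eqref{eqn:IntConcBound} occurs (after conditioning on $\{\bar x(t_{n_0})\in B\}$). Consider two cases according to whether the interpolated trajectory ever leaves $V^r$ on $[t_{n_0},\infty)$. If it never leaves, then $G_n$ holds for every $n$; now if additionally $\rho_{n+1}\le\epsilon/2$ for all $n_0\le n\le n_1$ and $\rho_{n+1}^*\le\epsilon$ for all $n\ge n_1+1$, I claim the conclusion holds, contradicting our assumption — indeed for $t\ge t_{n_1+1}$ we directly get $\|\bar x(t)-x^*\|\le\rho^*_{n+1}\le\epsilon$, and in fact the bound $\rho_{n+1}\le\epsilon/2$ on $[t_{n_0},t_{n_1+1}]$ combined with $\|x(t,t_{n_0},\bar x(t_{n_0}))-x^*\|\le\epsilon/2$ there pushes the conclusion even onto the earlier window; so some $\rho_{n+1}$ or $\rho^*_{n+1}$ must exceed $\epsilon$ (here one should be slightly careful about the $\epsilon$ versus $\epsilon/2$ bookkeeping — replacing $\epsilon$ by $\epsilon/2$ in the definitions of $\rho,\rho^*$ only changes constants and is absorbed into $C_1,C_2$ later, so I would either carry the $/2$ or simply note the harmless rescaling). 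If instead the trajectory does leave $V^r$, let $n$ be the first index with $\bar x(t)\notin V^r$ for some $t\in[t_n,t_{n+1}]$; then $G_n$ holds but $\bar x$ has moved from $V^{r_0}$-ish territory out past the $\epsilon_0$-collar $\mathcal N_{\epsilon_0}(V^{r_0})$, and since the relevant reference point ($x(t,t_{n_0},\bar x(t_{n_0}))$ if $n\le n_1$, or $x^*$ if $n> n_1$) lies in $V^{r_0}$, the displacement $\rho_{n+1}$ or $\rho^*_{n+1}$ is at least $\epsilon_0\ge\epsilon$ by $\pmb A_4$. Either way the asserted union occurs. Taking complements and conditional probabilities gives \eqref{eqn:IntConcBound}.

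The main obstacle is the second case — the bookkeeping around the Liapunov sublevel sets. One has to be careful that "first exit from $V^r$" is well defined for the continuous interpolation (it is, since $\bar x$ is continuous and $V^r$ is closed), that the event $G_n$ at that first-exit index is genuinely recorded in the union (it is $G_n$ with the strict inequality on the next block), and that the geometric separation $\epsilon_0$ between $V^{r_0}$ and the complement of $V^r$, guaranteed by the chain of inclusions in $\pmb A_4$, really does dominate $\epsilon$ because $\epsilon\le\epsilon_0$. Once these are pinned down the inclusion of events is routine, and the lemma follows by monotonicity of $\Pr$.
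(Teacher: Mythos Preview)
Your exit-from-$V^r$ decomposition is valid and does prove the lemma, but Case~1 is overloaded. The lemma assumes only \eqref{eqn:TReln1}; there is no hypothesis that the ODE has entered the $\epsilon/2$-ball by time $t_{n_1+1}$, and none is needed. In the ``never leaves $V^r$'' case every $G_n$ holds; failure of the conclusion gives some $t\ge t_{n_0}+T+1\ge t_{n_1+1}$ with $\|\bar x(t)-x^*\|>\epsilon$, hence $\rho^*_{n+1}>\epsilon$ for some $n\ge n_1+1$, which already lands in the second union. No $\rho_{n+1}$, no $\epsilon/2$, no ODE-entry time---and in particular the ``harmless rescaling'' you suggest would alter the lemma statement, whereas simply dropping the superfluous $\rho_{n+1}\le\epsilon/2$ hypothesis proves it as written. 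Your Case~2 is correct as stated: at the first exit index $n$ one has $G_n$, and $\mathcal N_{\epsilon_0}(V^{r_0})\subseteq V^r$ forces the relevant $\rho_{n+1}$ or $\rho^*_{n+1}$ to exceed $\epsilon_0\ge\epsilon$ since both reference trajectories ($x(\cdot,t_{n_0},\bar x(t_{n_0}))$ and the constant $x^*$) lie in $V^{r_0}$.

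For comparison, the paper does not track exit from $V^r$. It first reduces the bad event to $\{\bar x(t_{n_0})\in B,\ \bigcup_{n\ge n_1+1}\{\rho^*_{n+1}>\epsilon\}\}$ and then splits on whether $\sup_{n_0\le n\le n_1}\rho_{n+1}$ exceeds $\epsilon_0$. If it does, the \emph{first} exceedance index $n$ carries $G_n$ because the preceding $\rho_{k+1}\le\epsilon_0$ keep $\bar x$ inside $\mathcal N_{\epsilon_0}(V^{r_0})\subseteq V^r$. If it does not, one has $G_{n_1+1}$ outright; then the first $n\ge n_1+1$ with $\rho^*_{n+1}>\epsilon$ carries $G_n$ because the preceding $\rho^*_{k+1}\le\epsilon$ keep $\bar x$ in the $\epsilon$-ball, which $\pmb{A_4}$ places inside $V^r$. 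Your decomposition is arguably more direct; the paper's version uses each inclusion in $\pmb{A_4}$ (the $\epsilon_0$-collar and the $\epsilon$-ball) in a separate phase, which makes the role of those hypotheses more transparent.
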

\begin{proof}
Using \eqref{eqn:TReln1}, it follows that \eqref{eqn:SampleComplexity} satisfies the following relation.
\begin{eqnarray}
& & \Pr\{\|\bar{x}(t) - x^*\| \leq \epsilon \; \forall t \geq t_{n_0} + T + 1 \big| \bar{x}(t_{n_0}) \in B\} \nonumber \\
& \geq & \Pr\{\|\bar{x}(t) - x^*\| \leq \epsilon \; \forall t \geq t_{n_1 + 1} \big| \bar{x}(t_{n_0}) \in B\} \nonumber \\
& =  & \Pr\left\{\bigcap_{n = n_1 + 1}^{\infty} \{\rho_{n + 1}^{*} \leq \epsilon\} \bigg| \bar{x}(t_{n_0}) \in B\right\}  \nonumber \\
& = & 1 - \Pr\left\{\bigcup_{n = n_1 + 1}^{\infty} \{\rho_{n + 1}^{*} > \epsilon\} \bigg| \bar{x}(t_{n_0}) \in B\right\}  \nonumber \\
& = & 1 - \Pr\left\{\bar{x}(t_{n_0}) \in B, \bigcup_{n = n_1 + 1}^{\infty} \{\rho_{n + 1}^{*} > \epsilon\} \bigg| \bar{x}(t_{n_0}) \in B\right\} . \label{eqn:ConcBoundAlt3}
\end{eqnarray}
In the remaining part of this proof, we obtain a superset of the event in the second term in \eqref{eqn:ConcBoundAlt3}. This will help us prove the desired result.

For any event $E,$ let $E^c$ denote its complement. Then between any two events $E_1$ and $E_2,$ the following relation is easy to see.
\[
E_1 = (E_2 \cap E_1) \cup (E_2^c \cap E_1) \subseteq E_2 \cup (E_2^c \cap E_1).
\]
Using this, it follows that
\begin{multline*}
\bigcup_{n = n_1 + 1}^{\infty} \{\rho_{n + 1}^{*} > \epsilon\}  \subseteq \left\{\left[\sup\limits_{n_0 \leq n \leq n_1} \rho_{n + 1}\right] > \epsilon_0 \right\} \\ \cup \left\{\left[\sup\limits_{n_0 \leq n \leq n_1} \rho_{n + 1}\right] \leq \epsilon_0,  \bigcup_{n = n_1 + 1}^{\infty} \{\rho_{n + 1}^{*} > \epsilon\} \right\}
\end{multline*}
and hence
\begin{multline*}
\left\{\bar{x}(t_{n_0}) \in B, \bigcup_{n = n_1 + 1}^{\infty} \{\rho_{n + 1}^{*} > \epsilon\} \right\}  \subseteq \left\{\bar{x}(t_{n_0}) \in B, \left[\sup\limits_{n_0 \leq n \leq n_1} \rho_{n + 1}\right] > \epsilon_0 \right\} \\ \cup \left\{\bar{x}(t_{n_0}) \in B, \left[\sup\limits_{n_0 \leq n \leq n_1} \rho_{n + 1}\right] \leq \epsilon_0,  \bigcup_{n = n_1 + 1}^{\infty} \{\rho_{n + 1}^{*} > \epsilon\} \right\}.
\end{multline*}
Recall that, on the event $\{\bar{x}(t_{n_0}) \in B\},$ $x(t, t_{n_0}, \bar{x}(t_{n_0})) \in V^{r_0}$ for all $ t \geq t_{n_0}.$ Combining this with the assumption from $\pmb{A_4}$ that $\mathcal{N}_{\epsilon_0}(V^{r_0}) \subseteq V^{r},$ we get
\begin{eqnarray*}
& & \left\{\bar{x}(t_{n_0}) \in B, \left[\sup\limits_{n_0 \leq n \leq n_1} \rho_{n + 1}\right] > \epsilon_0 \right\} \\
& = & \{\bar{x}(t_{n_0}) \in B, \rho_{n_0 + 1} > \epsilon_0\} \\
& & \cup \bigcup_{n = n_0 + 1}^{n_1} \left\{\bar{x}(t_{n_0}) \in B, \left[\sup\limits_{n_0 \leq k < n} \rho_{k + 1}\right] \leq \epsilon_0,  \rho_{n + 1} > \epsilon_0\right\}\\
& \subseteq & \bigcup_{n = n_0}^{n_1} \{G_n,  \rho_{n + 1} > \epsilon_0\}\\
& \subseteq & \bigcup_{n = n_0}^{n_1} \{G_n,  \rho_{n + 1} > \epsilon\},
\end{eqnarray*}
where the last relation follows because $\epsilon_0 \geq \epsilon$ (see $\pmb{A_4}$). Arguing similarly and using the assumption from $\pmb{A_4}$ that $\{x \in \mathbb{R}^d : \|x - x^*\| \leq \epsilon\} \subseteq V^r,$ we get
\begin{eqnarray*}
& & \left\{\bar{x}(t_{n_0}) \in B, \left[\sup\limits_{n_0 \leq n \leq n_1} \rho_{n + 1}\right] \leq \epsilon_0,  \bigcup_{n = n_1 + 1}^{\infty} \{\rho_{n + 1}^{*} > \epsilon\} \right\}\\
& \subseteq & \left\{G_{n_1 + 1}, \bigcup_{n = n_1 + 1}^{\infty} \{\rho_{n + 1}^{*} > \epsilon\} \right\}\\
& \subseteq & \bigcup_{n = n_1 + 1}^{\infty} \{G_n, \rho_{n + 1}^* > \epsilon\}.
\end{eqnarray*}
Putting the above discussions together, we have
\begin{eqnarray*}
& & \left\{\bar{x}(t_{n_0}) \in B, \bigcup_{n = n_1 + 1}^{\infty} \{\rho_{n + 1}^{*} > \epsilon\} \right\}\\
& \subseteq & \bigcup_{n = n_0}^{n_1} \{G_n,  \rho_{n + 1} > \epsilon\} \cup \bigcup_{n =  n_1 + 1}^{\infty} \{G_n, \rho_{n + 1}^* > \epsilon\},
\end{eqnarray*}
which in combination with \eqref{eqn:ConcBoundAlt3}, gives the desired result.
\end{proof}

We now elaborate on our technique to prove Theorem~\ref{thm:MainResult} and the usefulness of \eqref{eqn:IntConcBound} for the same. First note that to obtain a lower bound on \eqref{eqn:SampleComplexity}, it suffices to obtain an upper bound on the second term on the RHS of \eqref{eqn:IntConcBound}. Indeed, this is what we do. This is also easier because we now only need to obtain bounds on $\rho_{n + 1}$ and $\rho_{n + 1}^*$ on the event $G_n.$ This has been done in Lemmas~\ref{lem:Bound_rho_n+1} and \ref{lem:Bound_rho_n+1^*} in Section~\ref{sec:ErrorBound}, where $S_n$ is an appropriate sum of martingale-differences. To show that the terms on the RHS there are small, we use the concentration inequality in Theorem~\ref{thm:ConcMartingalesMultivariate} and the assumption in \eqref{eqn:NoiseAssumption2}. In the next section, we describe Alekseev's formula and use it to give an alternative expression for $\bar{x}(t_n).$ This will be very useful for  proving Lemmas~\ref{lem:Bound_rho_n+1} and \ref{lem:Bound_rho_n+1^*}.

\section{Alekseev's formula and an alternative expression for $\bar{x}(t_n)$}
\label{sec:AlternativeViaAlekseev}

Alekseev's formula given below provides a recipe to compare two nonlinear systems of differential equations. This is a generalization of variation of constants formula.

\begin{theorem}[Alekseev's formula, \cite{alekseev1961estimate}]
\label{thm:AlekseevStatement}
Consider a differential equation
\[
\dot{u}(t) = f(t, u(t)), \; t \geq 0,
\]
and its perturbation
\[
\dot{p}(t) = f(t, p(t)) + g(t, p(t)), \; t \geq 0,
\]
where $f, g: \mathbb{R} \times \mathbb{R}^d \to \mathbb{R}^d,$ $f$ is continuously differentiable everywhere, and $g$ is continuous everywhere. Let $u(t, t_0, p_0)$ and $p(t, t_0, p_0)$ denote respectively the solutions to the above nonlinear systems for $t \geq t_0$ satisfying $u(t_0, t_0, p_0) = p(t_0, t_0, p_0) = p_0.$ Then,
\[
p(t, t_0, p_0) = u(t, t_0, p_0) + \int_{t_0}^{t} \Phi(t, s, p(s, t_0, p_0)) \; g(s, p(s, t_0, p_0)) ds, \;  t \geq t_0,
\]
where $\Phi(t, s, u_0),$ for $u_0 \in \mathbb{R}^d,$ is the fundamental matrix of the linear system
\begin{equation}
\label{eqn:VariationalSystemAlekseevLemma}
\dot{v}(t) = \frac{\partial f}{\partial u} (t, u(t, s, u_0)) \; v(t), \; t\geq s,
\end{equation}
with $\Phi(s, s, u_0) = \mathbb{I}_d,$ the $d-$dimensional identity matrix.
\end{theorem}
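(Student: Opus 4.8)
The plan is to prove Alekseev's formula by the classical device of differentiating the \emph{unperturbed} flow evaluated along the \emph{perturbed} trajectory. Write $p(t) \equiv p(t, t_0, p_0)$ for the solution of the perturbed equation, and recall that $u(t, s, w)$ denotes the solution of $\dot u = f(t,u)$ that takes the value $w$ at time $s$. The key observation is that, for a fixed $t \ge t_0$, the map $s \mapsto u(t, s, p(s))$ on $[t_0, t]$ interpolates between the two quantities of interest: at $s = t_0$ it equals $u(t, t_0, p_0)$, while at $s = t$ it equals $u(t, t, p(t)) = p(t)$. Hence, once this map is shown to be $\mathcal{C}^1$ in $s$,
\begin{equation*}
p(t) - u(t, t_0, p_0) = \int_{t_0}^{t} \frac{d}{ds}\, u\big(t, s, p(s)\big)\, ds,
\end{equation*}
and the proof reduces to identifying the integrand.

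First I would record two standard identities for the unperturbed flow, both using that $f \in \mathcal{C}^1$, so that $u(t,s,w)$ depends $\mathcal{C}^1$-smoothly on its initial data $(s,w)$ (smooth dependence on initial time and initial condition; see e.g.\ \cite{teschl2012ordinary}). (i) Differentiating the identity $\partial_t u(t,s,w) = f\big(t, u(t,s,w)\big)$ with respect to $w$ shows that $\partial_w u(t,s,w)$ solves the variational equation \eqref{eqn:VariationalSystemAlekseevLemma} with value $\mathbb{I}_d$ at $t=s$; by uniqueness of solutions of linear systems, $\partial_w u(t,s,w) = \Phi(t,s,w)$. (ii) The flow identity $u\big(t, s, u(s, \sigma, w)\big) = u(t, \sigma, w)$ has a right-hand side independent of $s$; differentiating the left-hand side in $s$, using $\frac{d}{ds}u(s,\sigma,w) = f\big(s, u(s,\sigma,w)\big)$ for the inner flow together with (i), and then setting $\sigma = s$ (so $u(s,s,w) = w$), yields
\begin{equation*}
\partial_s u(t, s, w) = -\,\Phi(t,s,w)\, f(s,w),
\end{equation*}
where $\partial_s$ now denotes the derivative in the (second) initial-time slot.

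With (i) and (ii) in hand, the chain rule together with $\dot p(s) = f(s,p(s)) + g(s,p(s))$ gives
\begin{multline*}
\frac{d}{ds}\, u\big(t,s,p(s)\big) = \partial_s u\big(t,s,p(s)\big) + \partial_w u\big(t,s,p(s)\big)\, \dot p(s) \\ = -\,\Phi\big(t,s,p(s)\big)\, f\big(s,p(s)\big) + \Phi\big(t,s,p(s)\big)\big[f\big(s,p(s)\big) + g\big(s,p(s)\big)\big] = \Phi\big(t,s,p(s)\big)\, g\big(s,p(s)\big),
\end{multline*}
the $f$-terms cancelling. Substituting this into the integral representation above gives precisely the claimed formula.

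I expect the only genuinely delicate point to be the regularity input: one needs $u(t,s,w)$ to be continuously differentiable in \emph{both} $s$ and $w$ on the relevant interval, so that $s \mapsto u(t,s,p(s))$ is $\mathcal{C}^1$ and the chain rule is legitimate. This is exactly where the hypothesis that $f$ is continuously differentiable --- rather than merely continuous, as is assumed of $g$ --- is used essentially; the smoothness of $g$ is needed only to ensure that $p(\cdot)$ is $\mathcal{C}^1$. It should also be noted that the argument requires the solutions involved to exist only on $[t_0,t]$, which is implicit in writing $p(t,t_0,p_0)$ there, so no global existence or a priori bounds are needed.
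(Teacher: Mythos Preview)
Your proof is correct and is precisely the standard argument for Alekseev's formula: differentiate the unperturbed flow evaluated along the perturbed trajectory, use the variational interpretation of $\partial_w u$ and the identity $\partial_s u(t,s,w) = -\Phi(t,s,w)f(s,w)$ obtained from the semigroup property, and integrate. The paper itself does not supply a proof at all---it simply cites \cite[Lemma~3]{brauer1966perturbations} for an English account---and the proof there follows exactly the approach you have written, so there is nothing to compare beyond noting that you have reproduced the classical derivation.
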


See \cite[Lemma 3]{brauer1966perturbations} for an English version of the original proof for the above result. We now use this result to compare $\bar{x}(t)$ with $x(t, t_{n_0}, \bar{x}(t_{n_0})).$
Using \eqref{eqn:SAIterates} and since $t_{k + 1} - t_k = a_k,$  $k \geq 0,$ we have, for any $n \geq n_0,$
\begin{eqnarray*}
\bar{x}(t_{n + 1}) & = & \bar{x}(t_{n_0}) + \sum_{k = n_0}^{n} a_k h(\bar{x}(t_k)) + \sum_{k = n_0}^{n} a_k M_{k + 1} \\
& = & \bar{x}(t_{n_0}) + \sum_{k = n_0}^{n} \int_{t_k}^{t_{k + 1}} h(\bar{x}(t_k)) ds + \sum_{k = n_0}^{n} \int_{t_k}^{t_{k + 1}} M_{k + 1} ds.
\end{eqnarray*}
For $k \geq n_0$ and $s \in [t_k, t_{k + 1}],$ define
\begin{equation}
\label{eqn:DiscretizationNoise}
\zeta_1(s) = h(\bar{x}(t_k)) - h(\bar{x}(s))
\end{equation}
and
\begin{equation}
\label{eqn:MartingaleNoise}
\zeta_2(s) = M_{k + 1}.
\end{equation}
Then it is easy to see that for $n \geq n_0$
\[
\bar{x}(t_{n + 1}) = \bar{x}(t_{n_0}) + \int_{t_{n_0}}^{t_{n + 1}} h(\bar{x}(s)) ds + \int_{t_{n_0}}^{t_{n + 1}} \zeta_1(s) ds + \int_{t_{n_0}}^{t_{n + 1}} \zeta_2(s) ds
\]
and in fact for $t \geq t_{n_0}$
\begin{equation}
\label{eqn:PerturbedODE}
\bar{x}(t) = \bar{x}(t_{n_0}) + \int_{t_{n_0}}^{t}h(\bar{x}(s))ds + \int_{t_{n_0}}^{t}\zeta_{1}(s) ds + \int_{t_{n_0}}^{t} \zeta_{2}(s)ds.
\end{equation}

Think of \eqref{eqn:LimitingODE} as the unperturbed ODE and \eqref{eqn:PerturbedODE} as its perturbation. The perturbation term at time $t$ is of course $\zeta_1(t) + \zeta_2(t),$ which is piecewise continuous in $t.$ The same proof that was used to prove Theorem~\ref{thm:AlekseevStatement} also holds in this context. Hence, using Alekseev's formula, we get
\begin{multline}
\label{eqn:AlekseevSAIterates}
\bar{x}(t) = x(t, t_{n_0}, \bar{x}(t_{n_0})) + \int_{t_{n_0}}^{t} \Phi(t, s, \bar{x}(s)) \zeta_1(s)ds \\ + \;  \int_{t_{n_0}}^{t} \Phi(t, s, \bar{x}(s)) \zeta_2(s)ds,
\end{multline}
where $\Phi(t, s, u_0),$ for any $u_0 \in \mathbb{R}^d,$ is the fundamental matrix of the non-autonomous linearized system
\begin{equation}
\label{eqn:VariationalEquation}
\dot{y}(t) = Dh(x(t, s, u_0)) y(t), \ t \geq s,
\end{equation}
with $\Phi(s, s, u_0) = \mathbb{I}_d.$ Here $Dh(x(t, s, u_0))$ is the Jacobian matrix of $h$ along the solution trajectory $x(t, s, u_0).$

Using \eqref{eqn:DiscretizationNoise}, \eqref{eqn:MartingaleNoise}, and \eqref{eqn:AlekseevSAIterates}, the following result is now immediate. This gives the desired alternative expression for $\bar{x}(t_n).$

\begin{theorem}
\label{thm:AlternativeExp_xnViaAlekseev}
Let $\bar{x}(t)$ be as in \eqref{eqn:LinearInterpolation}. Then
\begin{equation}
\label{eqn:AlternativeExp_xnViaAlekseev}
\bar{x}(t_{n}) = x(t_n, t_{n_0}, \bar{x}(t_{n_0})) + W_n + S_{n} + (\tilde{S}_{n} - S_{n}),
\end{equation}
where
\begin{equation}
\label{eqn:Defn_Wn}
W_{n} := \sum_{k = n_0}^{n - 1} \int_{t_k}^{t_{k + 1}} \Phi(t_{n}, s, \bar{x}(s)) [h(\bar{x}(t_k)) - h(\bar{x}(s))] ds,
\end{equation}
\begin{equation}
\label{eqn:Defn_tildeSn}
\tilde{S}_{n} := \sum_{k = n_0}^{n - 1} \left[ \int_{t_k}^{t_{k + 1}} \Phi(t_{n}, s, \bar{x}(s)) ds \right] M_{k + 1},
\end{equation}
and
\begin{equation}
\label{eqn:Defn_Sn}
S_{n} := \sum_{k = n_0}^{n - 1} \left[\int_{t_k}^{t_{k + 1}} \Phi(t_{n}, s, \bar{x}(t_k)) ds \right] M_{k + 1},
\end{equation}
with $\Phi(t_{n}, s, \bar{x}(s))$ being the fundamental matrix of \eqref{eqn:VariationalEquation} with $u_0 = \bar{x}(s).$
\end{theorem}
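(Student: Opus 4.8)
The plan is to derive \eqref{eqn:AlternativeExp_xnViaAlekseev} directly from Alekseev's formula \eqref{eqn:AlekseevSAIterates} by specializing $t = t_n$ and then bookkeeping the two perturbation integrals. First I would write \eqref{eqn:AlekseevSAIterates} at $t = t_n$, so that
\[
\bar{x}(t_n) = x(t_n, t_{n_0}, \bar{x}(t_{n_0})) + \int_{t_{n_0}}^{t_n} \Phi(t_n, s, \bar{x}(s)) \zeta_1(s)\, ds + \int_{t_{n_0}}^{t_n} \Phi(t_n, s, \bar{x}(s)) \zeta_2(s)\, ds.
\]
The first integral is handled by splitting $[t_{n_0}, t_n] = \bigcup_{k = n_0}^{n-1} [t_k, t_{k+1}]$ and substituting the definition \eqref{eqn:DiscretizationNoise} of $\zeta_1$ on each subinterval; this immediately yields $W_n$ as in \eqref{eqn:Defn_Wn}. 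For the second integral, the same partition together with $\zeta_2(s) = M_{k+1}$ on $[t_k, t_{k+1}]$ (see \eqref{eqn:MartingaleNoise}) gives $\sum_{k=n_0}^{n-1} \int_{t_k}^{t_{k+1}} \Phi(t_n, s, \bar{x}(s))\, ds\, M_{k+1}$, which is exactly $\tilde{S}_n$ as defined in \eqref{eqn:Defn_tildeSn}. Finally I would write $\tilde{S}_n = S_n + (\tilde{S}_n - S_n)$, with $S_n$ obtained by replacing the argument $\bar{x}(s)$ of the fundamental matrix by its value $\bar{x}(t_k)$ at the left endpoint of the subinterval, matching \eqref{eqn:Defn_Sn}. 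Collecting the three pieces produces \eqref{eqn:AlternativeExp_xnViaAlekseev}.

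The only genuinely non-routine point is justifying the use of Alekseev's formula itself in this setting, since Theorem~\ref{thm:AlekseevStatement} as stated requires the perturbation $g$ to be \emph{continuous} everywhere, whereas here the perturbation $\zeta_1(s) + \zeta_2(s)$ is only piecewise continuous (it has jumps at the grid points $t_k$). As the text preceding \eqref{eqn:AlekseevSAIterates} already notes, ``the same proof that was used to prove Theorem~\ref{thm:AlekseevStatement} also holds in this context''; I would lean on that remark, the point being that the proof via differentiating $s \mapsto u(t, s, p(s, t_0, p_0))$ only uses continuity of the solution $\bar{x}(\cdot)$ (which holds, by the linear interpolation \eqref{eqn:LinearInterpolation}) and integrability of the perturbation, not its continuity. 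Equivalently, one can apply Theorem~\ref{thm:AlekseevStatement} on each subinterval $[t_k, t_{k+1}]$, where $\zeta_1 + \zeta_2$ is continuous, and then chain the resulting identities using the cocycle property $\Phi(t_n, s, u_0) = \Phi(t_n, t_{k+1}, \,\cdot\,)\,\Phi(t_{k+1}, s, u_0)$ of the fundamental matrix of \eqref{eqn:VariationalEquation}; this is the step I expect to require the most care, though it is still essentially bookkeeping once the cocycle property is invoked.

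Beyond that, the argument is purely algebraic: the substitutions \eqref{eqn:DiscretizationNoise}--\eqref{eqn:MartingaleNoise}, the interval decomposition, and the add-and-subtract trick for $S_n$ versus $\tilde{S}_n$. No estimates are needed at this stage — bounds on $W_n$, $\tilde{S}_n - S_n$, and $S_n$ are deferred to Section~\ref{sec:ErrorBound}. Hence the proof of Theorem~\ref{thm:AlternativeExp_xnViaAlekseev} is short and I would present it as ``immediate from \eqref{eqn:DiscretizationNoise}, \eqref{eqn:MartingaleNoise}, and \eqref{eqn:AlekseevSAIterates}'', exactly as the excerpt's lead-in sentence promises, spelling out only the interval split and the $S_n$/$\tilde{S}_n$ decomposition for the reader's convenience.
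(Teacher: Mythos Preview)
Your proposal is correct and follows exactly the paper's approach: the paper's proof is the single sentence ``Using \eqref{eqn:DiscretizationNoise}, \eqref{eqn:MartingaleNoise}, and \eqref{eqn:AlekseevSAIterates}, the following result is now immediate,'' and you have simply unpacked that sentence. Your discussion of the piecewise-continuity issue is more explicit than the paper's (which dispatches it with the remark preceding \eqref{eqn:AlekseevSAIterates}), but the content is the same.
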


\begin{remark}
Note that $\{S_n\}$ is a sum of martingale-differences with respect to $\{\mathcal{F}_{n}\},$ while $\tilde{S}_{n}$ is not. We shall exploit this later while proving Theorem~\ref{thm:MainResult}.
\end{remark}

\section{Bound on $\rho_{n + 1}, \rho_{n + 1}^*$ on $G_n$}
\label{sec:ErrorBound}
Our aim here is to obtain a bound on $\rho_{n + 1}, \rho_{n + 1}^*$ on the event $G_n.$ This is given in Lemmas~\ref{lem:Bound_rho_n+1} and \ref{lem:Bound_rho_n+1^*}. We shall use this in Section~\ref{sec:MainProof} to obtain a bound on the second term on the RHS of \eqref{eqn:IntConcBound} and hence on \eqref{eqn:SampleComplexity}. The proof of the above mentioned results require some supplementary lemmas which we prove first. Across these lemmas, we shall repeatedly use the linear ODE
\begin{equation}
\label{eqn:LinearizedSystem}
\dot{z}(t) = Dh(x^*) z(t).
\end{equation}
This is the linearization of \eqref{eqn:LimitingODE} near $x^*.$ We shall also use $r$ as in $\pmb{A_4}$ and
\begin{equation}
\label{eqn:Defn_R}
R:= \sup_{x \in V^r} \|x - x^*\|.
\end{equation}
Separately, from $\pmb{A_1},$ recall that $h \in \mathcal{C}^2.$ Hence it follows that $h$ and $Dh$ are Lipschitz continuous over the compact set $V^r.$ Let $L_h$ and $L_D,$ respectively, denote the associated Lipschitz constants.

\begin{lemma}
\label{lem:CloseSolutionsBound}
Let $\lambda$ be as in \eqref{eqn:Defn_lambda}. Let $u_0, u_1$ be arbitrary points in $V^r$ and $s$ be an arbitrary positive real number. Then for $t \geq s,$
\[
\|x(t, s, u_0) - x(t, s, u_1)\| \leq K_1 \|u_0 - u_1\| e^{-\lambda(t - s)},
\]
where $K_1 \geq 0$ is some constant.
\end{lemma}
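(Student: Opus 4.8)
The plan is to compare the two trajectories via the variational equation against the constant coefficient system $\dot z = Dh(x^*)z$, exploiting \eqref{eqn:BoundNormDhx*}, and to absorb the transient (away from $x^*$) into the constant $K_1$ using that $V^r$ is compact (cf.\ \eqref{eqn:Defn_R}) and forward invariant for \eqref{eqn:LimitingODE}, and lies in the basin of attraction of $x^*$ (by $\pmb{A_4}$ and LaSalle's invariance principle, since $V^r$ is a compact sublevel set in $\mathrm{dom}(V)$). Since \eqref{eqn:LimitingODE} is autonomous, $x(t,s,u_i)=x(t-s,0,u_i)$, so it suffices to treat $s=0$; write $x(t):=x(t,0,u_0)$, $y(t):=x(t,0,u_1)$, $e(t):=x(t)-y(t)$, both trajectories remaining in $V^r$ for all $t\ge 0$. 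By the fundamental theorem of calculus along the segment from $y(t)$ to $x(t)$ (legitimate as $h\in\mathcal{C}^1$), $\dot e(t)=A(t)e(t)$ with $A(t):=\int_0^1 Dh(y(t)+\theta e(t))\,d\theta$, hence $\dot e(t)=Dh(x^*)e(t)+\big(A(t)-Dh(x^*)\big)e(t)$. Variation of constants, \eqref{eqn:BoundNormDhx*}, and Gronwall's inequality (applied to $t\mapsto e^{\lambda^\prime t}\|e(t)\|$) then give
\[
\|e(t)\|\le \tilde K\,\|e(0)\|\,e^{-\lambda^\prime t}\exp\!\left(\tilde K\int_0^t\|A(\tau)-Dh(x^*)\|\,d\tau\right),\qquad t\ge 0.
\]

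Next I would control the integral in the exponent. Set $\eta:=(\lambda^\prime-\lambda)/\tilde K>0$, which is well defined since $\lambda<\lambda^\prime$; by continuity of $Dh$ pick $\delta>0$ with $\|Dh(x)-Dh(x^*)\|\le\eta$ on the (convex) ball $B_\delta(x^*)$. Since $V$ is continuous with $V(x^*)=0<V(x)$ for $x\ne x^*$, the sets $V^{r_1}$ form a nested family of compact sets shrinking to $\{x^*\}$ as $r_1\downarrow 0$, so fix $r_1\in(0,r)$ with $V^{r_1}\subseteq B_\delta(x^*)$; note $V^{r_1}$ is forward invariant because $V$ is a Liapunov function. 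By compactness of $V^r$, continuity of the flow in the initial condition, and convergence of every trajectory from $V^r$ to $x^*$, there is a finite $\bar T$, independent of the starting point in $V^r$, with $x(\bar T,0,u)\in V^{r_1}$ for all $u\in V^r$; by forward invariance, $x(t),y(t)\in V^{r_1}\subseteq B_\delta(x^*)$ for all $t\ge\bar T$, and since $B_\delta(x^*)$ is convex the whole segment joining $x(t)$ and $y(t)$ lies in it, so $\|A(t)-Dh(x^*)\|\le\eta$ for $t\ge\bar T$. For $t\le\bar T$ I only use the crude bound $\|A(t)-Dh(x^*)\|\le L_2:=\sup_{x\in\mathrm{conv}(V^r)}\|Dh(x)\|+\|Dh(x^*)\|<\infty$ (finite since $\mathrm{conv}(V^r)$ is compact and the segment between $x(t),y(t)\in V^r$ lies in it). Hence $\int_0^t\|A(\tau)-Dh(x^*)\|\,d\tau\le L_2\bar T+\eta t$, and substituting,
\[
\|e(t)\|\le \tilde K e^{\tilde K L_2\bar T}\|e(0)\|\,e^{-(\lambda^\prime-\tilde K\eta)t}=\tilde K e^{\tilde K L_2\bar T}\|e(0)\|\,e^{-\lambda t},
\]
using $\lambda^\prime-\tilde K\eta=\lambda$. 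Thus $K_1:=\tilde K e^{\tilde K L_2\bar T}$ (which is $\ge 1$) works, and restoring a general $s\ge 0$ by autonomy gives the claimed bound.

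The main obstacle is not the estimate itself but securing the \emph{uniformity} of $K_1$ (and of $\bar T$) over all $u_0,u_1\in V^r$ and all $s\ge 0$. The $s$-uniformity is free from autonomy; the uniform absorption time $\bar T$ and the uniform bound $L_2$ come from compactness of $V^r$. The one genuinely delicate point is that the mean-value Jacobian $A(t)$ is evaluated along the \emph{segment} between the two trajectories, which a priori need not remain in $V^r$ nor in $V^{r_1}$; this forces the use of $\mathrm{conv}(V^r)$ in the transient bound and the choice of a convex ball $B_\delta(x^*)$ (containing a small sublevel set $V^{r_1}$) as the neighbourhood of $x^*$ on which the linearization controls $A(t)$, rather than the sublevel set itself, which may fail to be convex.
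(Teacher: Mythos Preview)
Your proof is correct and complete; in particular, you have handled the one genuinely delicate point (the mean-value Jacobian $A(t)$ is evaluated on the \emph{segment} between the two trajectories, which need not stay in the sublevel sets) cleanly by passing to $\mathrm{conv}(V^r)$ in the transient phase and to the convex ball $B_\delta(x^*)$ in the asymptotic phase.

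The route, however, is genuinely different from the paper's. The paper does not use variation of constants against $e^{Dh(x^*)t}$ at all; instead it builds a quadratic Lyapunov function for the error: with $P$ the unique positive definite solution of $Dh(x^*)^\top P + P\,Dh(x^*) = -\mathbb{I}_d$, it sets $\mathcal{V}(t)=e(t)^\top P\,e(t)$ and shows $\dot{\mathcal{V}}\le -2\lambda\,\mathcal{V}$ on a small convex neighbourhood $\mathcal{N}(x^*)$ where $\|Dh(x)^\top P + P\,Dh(x) + \mathbb{I}_d\|\le\kappa$. This is precisely where the specific value $\lambda=((1-\kappa)/\tilde{K}^2)\lambda'$ in \eqref{eqn:Defn_lambda} originates (via $\|P\|\le \tilde{K}^2/(2\lambda')$), and it gives an explicit constant $K_1'=\sqrt{\|P\|/\lambda_{\min}(P)}$ for initial data in $V^{r'}\subseteq\mathcal{N}(x^*)$; the extension to $V^r$ is then done exactly as you do, by a uniform absorption time $\mathscr{T}$ and a crude Gronwall bound on $[0,\mathscr{T}]$. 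Your argument is more elementary and in fact stronger in principle: since $\eta=(\lambda'-\lambda)/\tilde K>0$ can be taken for \emph{any} prescribed $\lambda<\lambda'$, you would obtain the lemma for any such $\lambda$, not only the particular one in \eqref{eqn:Defn_lambda}. What the paper's approach buys is traceable constants tied to the Lyapunov matrix $P$ (and a natural explanation of why $\lambda$ is defined the way it is), whereas your constant $K_1=\tilde K e^{\tilde K L_2 \bar T}$ hides the dependence on $V^r$ inside the absorption time $\bar T$.
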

\begin{proof}
We first prove the following claim.

\textbf{Claim (i)}
There exists $r^\prime$ satisfying $0 < r^\prime < r$ with the following property. For any arbitrary $u_0, u_1 \in V^{r^\prime}$ and any $s \geq 0,$
\[
\|x(t, s, u_0) - x(t, s, u_1)\| \leq K_1^\prime \|u_0 - u_1\| e^{-\lambda(t - s)},
\]
where $K_1^\prime \geq 0$ is some constant.

Let
\begin{equation}
\label{eqn:Defn_P}
P := \int_{0}^{\infty} e^{[Dh(x^*)^\tr]t} e^{ [Dh(x^*)]t} dt,
\end{equation}
where $\tr$ denotes transpose. Since $Dh(x^*)$ is Hurwitz, $P$ is well defined. It is also easy to check that $P$ is symmetric and positive definite. From \cite[Theorem 4.6, p.\ 136]{khalil1996nonlinear}, we further have that $P$ is the unique positive definite and symmetric matrix satisfying the Liapunov equation
\begin{equation}
\label{eqn:LiapunovEquationSol}
Dh(x^*)^\tr P + PDh(x^*) = - \mathbb{I}_d,
\end{equation}
where, as mentioned before, $\mathbb{I}_d$ is the $d-$dimensional identity matrix. Let
\begin{equation}
\label{eqn:Defn_Z}
Z(x) = Dh(x)^\tr P + P Dh(x).
\end{equation}
From \eqref{eqn:LiapunovEquationSol}, $Z(x^*) = - \mathbb{I}_d.$ Let $\mathcal{N}(x^*)$ be a convex neighbourhood of $x^*$ such that
\[
\|Z(x) - Z(x^*)\| = \|Z(x) + \mathbb{I}_d\| \leq \kappa  \; \forall x \in \mathcal{N}(x^*),
\]
where $\kappa$ is as defined below \eqref{eqn:Defn_LambdaMinx*}. The existence of $\mathcal{N}(x^*)$ is guaranteed since $Z$ is continuous. The latter follows due to $\pmb{A_1}$ which ensures that $Dh$ is continuous.

Fix $r^\prime$ such that $0 < r^\prime < r$ and $V^{r^\prime} \subseteq \mathcal{N}(x^*).$
Fix $s,$ $u_0,$ and $u_1$ as prescribed in \textbf{Claim (i)} with $r^\prime$ as defined above. For notational convenience, let
\[
x_0(t) \equiv x(t, s, u_0)
\]
and
\[
x_1(t) \equiv x(t, s, u_1).
\]
Also let
\begin{equation}
\label{eqn:Defn_calV}
\mathcal{V}(t) = [x_0(t) - x_1(t)]^\tr P [x_0(t) - x_1(t)], \; t \geq s.
\end{equation}
Observe that since $P$ is positive definite, $\mathcal{V}(t) \geq 0$ for all $t \geq s.$  Differentiating with respect to $t$ and using the fact that
\[
\dot{x}_i(t) = h(x_i(t)),
\]
it is easy to see that
\begin{multline*}
\dot{\mathcal{V}}(t) = [h(x_0(t)) - h(x_1(t))]^\tr P [x_0(t) - x_1(t)] \\ + \; [x_0(t) - x_1(t)]^\tr P [h(x_0(t)) - h(x_1(t))].
\end{multline*}
By the mean value theorem,
\[
h(x_0(t)) - h(x_1(t)) = \left[\int_{0}^{1} Dh(x_1(t) + \tau [x_0(t) - x_1(t)]) d\tau \right] [x_0(t) - x_1(t)].
\]
Hence
\[
\dot{\mathcal{V}}(t) = [x_0(t) - x_1(t)]^\tr \left[\int_{0}^{1}Z(x_1(t) + \tau [x_0(t) - x_1(t)] ) d\tau \right] [x_0(t) - x_1(t)],
\]
where $Z$ is as in \eqref{eqn:Defn_Z}.

Since $V$ is a Liapunov function and $u_i \in V^{r^\prime},$ $x_i(t) \in V^{r^\prime} \subseteq \mathcal{N}(x^*)$ for all $t \geq s.$ Further since $\mathcal{N}(x^*)$ is convex, $x_1(t) + \tau [x_0(t) - x_1(t)] \in \mathcal{N}(x^*)$ for all $t \geq s$ and $\tau \in [0,1].$ By definition of $\mathcal{N}(x^*),$ for all $t \geq s$ and $\tau \in [0,1],$
\[
\|Z(x_1(t) + \tau [x_0(t) - x_1(t)]) + \mathbb{I}_d\| \leq \kappa.
\]
Hence by adding and subtracting $\mathbb{I}_d$ to the integrand in the relation concerning $\dot{\mathcal{V}}(t)$ above, it follows that
\[
\dot{\mathcal{V}}(t) \leq - (1 - \kappa) \|x_0(t) - x_1(t)\|^2.
\]
By definition of $\mathcal{V}(t)$ in \eqref{eqn:Defn_calV}, also note that
\[
\mathcal{V}(t) \leq \|P\| \; \|x_0(t) - x_1(t)\|^2.
\]
Combining the above two relations, we get
\[
\dot{\mathcal{V}}(t) \leq - \frac{1 - \kappa }{\|P\|} \mathcal{V}(t).
\]
But using \eqref{eqn:Defn_P} and \eqref{eqn:BoundNormDhx*}, note that
\begin{equation}
\label{eqn:PBound}
\|P\| \leq \frac{\tilde{K}^2}{2\lambda^\prime}.
\end{equation}
Hence using \eqref{eqn:Defn_lambda}, we have
\[
\dot{\mathcal{V}}(t) \leq - 2\left(\frac{1 - \kappa}{\tilde{K}^2}\right) \lambda^\prime \mathcal{V}(t) = - 2\lambda \mathcal{V}(t)
\]
and consequently, by integrating from $s$ to $t,$
\[
\mathcal{V}(t) \leq \mathcal{V}(s) e^{-2 \lambda(t - s)}.
\]

Since from \eqref{eqn:Defn_calV},
\[
\mathcal{V}(t) \geq \lambda_{\min}(P) \|x_0(t) - x_1(t)\|^2,
\]
and
\[
\mathcal{V}(s) \leq \|P\| \; \|x_0(s) - x_1(s)\|^2 =  \|P\| \; \|u_0 - u_1\|^2,
\]
it follows that
\begin{eqnarray*}
& & \|x(t, s, u_0) - x(t, s, u_1)\| \\
& = & \|x_0(t) - x_1(t)\| \\
& \leq & \sqrt{\frac{\mathcal{V}(t)}{\lambda_{\min}(P)}}\\
& \leq & \sqrt{\frac{\mathcal{V}(s) e^{-2\lambda (t - s)}}{\lambda_{\min}(P)}}\\
& \leq & K_1^\prime \|u_0 - u_1\| e^{-\lambda (t - s)},
\end{eqnarray*}
where $K_1^\prime := \sqrt{\frac{\|P\|}{\lambda_{\min}(P)}}.$ This proves \textbf{Claim (i)}, as desired.

We now proceed to prove the actual lemma. Pick arbitrary $u_0, u_1 \in V^r$ and $s \geq 0.$ Observe that
\[
x(t, s, u_i) = u_i + \int_{s}^{t} h(x(\tau, s, u_i)) \; d \tau.
\]
Hence we have
\[
\|x(t, s, u_0) - x(t, s, u_1)\| \leq \|u_0 - u_1\| + \int_{s}^{t} \|h(x(\tau, s, u_0)) - h(x(\tau, s, u_1))\|  \; d \tau.
\]
Since $u_i \in V^r$ and $V$ is a Liapunov function, $x(t, s, u_i) \in V^r$ for each $t \geq s.$ Hence, invoking the Lipschitz continuity of $h$ over $V^r,$ it follows that
\[
\|x(t, s, u_0) - x(t, s, u_1)\| \leq \|u_0 - u_1\| + L_h\int_{s}^{t} \|x(\tau, s, u_0) - x(\tau, s, u_1)\|  \; d \tau.
\]
Using Gronwall inequality \cite[Corollary 1.1]{bainov1992integral} on this, we get
\begin{equation}
\label{eqn:BoundCloseSolGronwall}
\|x(t, s, u_0) - x(t, s, u_1)\| \leq \|u_0 - u_1\| e^{L_h (t - s)}
\end{equation}
for any $t \geq s.$

Let $r^\prime$ be as in \textbf{Claim (i)} and let
\[
\mathscr{T}:= \frac{r - r^{\prime} }{\inf_{x \in V^r \backslash V^{r^\prime}}|\nabla V(x) \cdot h(x)|}.
\]
As $V$ is a Liapunov function, $\inf_{x \in V^r \backslash V^{r^\prime}}|\nabla V(x) \cdot h(x)| > 0.$ Since $\dot{V}(x(t)) = \nabla V(x(t)) \cdot h(x(t)),$ $\mathscr{T}$ is an upper bound on the time taken for a solution of \eqref{eqn:LimitingODE} starting from any point in $V^r$ to reach $V^{r^\prime}.$ That is, $x(s + \mathscr{T}, s, u_i) \in V^{r^\prime}$ whatever be the values of $s \geq 0$ and $u_i \in V^r.$ Combining this with \textbf{Claim (i)} above, it follows that for all $t \geq s + \mathscr{T},$
\begin{eqnarray*}
& & \|x(t, s, u_0) - x(t, s, u_1)\| \\
& \leq & K_1^\prime \|x(s + \mathscr{T}, s, u_0) - x(s + \mathscr{T}, s, u_1)\| \; e^{-\lambda (t - s - \mathscr{T})}.
\end{eqnarray*}
From \eqref{eqn:BoundCloseSolGronwall},
\[
\|x(s + \mathscr{T}, s, u_0) - x(s + \mathscr{T}, s, u_1)\| \leq \|u_0 - u_1\| e^{L_h \mathscr{T}}.
\]
Combining the above two, it follows that for $t \geq s + \mathscr{T},$
\[
\|x(t, s, u_0) - x(t, s, u_1)\| \leq K_1^\prime e^{L_h \mathscr{T}} \|u_0 - u_1\| e^{-\lambda (t - s - \mathscr{T})}.
\]
Hence for suitable $K_1 \geq 0,$ we have
\[
\|x(t, s, u_0) - x(t, s, u_1)\| \leq K_1 \|u_0 - u_1\| e^{-\lambda (t - s)}
\]
for all $t \geq s.$ This proves the desired result.
\end{proof}

\begin{lemma}
\label{lem:BoundDh}
Let $u_0 \in V^r$ and $s \geq 0$ be arbitrary. Then for any $t \geq s,$
\[
\int_{s}^{t}\|Dh(x(\tau, s, u_0)) - Dh(x^*)\| \; d\tau \leq K_2,
\]
where $K_2 \geq 0$ is some constant.
\end{lemma}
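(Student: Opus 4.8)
The plan is to split the time interval $[s,t]$ at $s+\mathscr{T}$, where $\mathscr{T}$ is the uniform hitting time of $V^{r'}$ introduced in the proof of Lemma~\ref{lem:CloseSolutionsBound}, and to bound the two resulting integrals separately. On $[s, s+\mathscr{T}]$ the integrand is bounded by a constant: since $u_0 \in V^r$ and $V$ is a Liapunov function, the trajectory $x(\tau, s, u_0)$ stays in the compact set $V^r$, on which $Dh$ is continuous (by $\pmb{A_1}$) and hence bounded, so $\|Dh(x(\tau,s,u_0)) - Dh(x^*)\| \le 2\sup_{x \in V^r}\|Dh(x)\| =: 2M$; integrating over an interval of length $\mathscr{T}$ contributes at most $2M\mathscr{T}$.

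The substantive part is the tail $[s+\mathscr{T}, t]$. Here $x(\tau,s,u_0) \in V^{r'} \subseteq \mathcal{N}(x^*)$ for all $\tau \ge s+\mathscr{T}$, so I would first apply Lipschitz continuity of $Dh$ on the compact set $V^r$ (with constant $L_{Dh}$, say) to write $\|Dh(x(\tau,s,u_0)) - Dh(x^*)\| \le L_{Dh}\,\|x(\tau,s,u_0) - x^*\|$. Now I invoke Lemma~\ref{lem:CloseSolutionsBound} with $u_1 = x^*$ (note $x^* \in V^r$ and $x(t,s,x^*) \equiv x^*$ since $x^*$ is an equilibrium), which gives $\|x(\tau,s,u_0) - x^*\| \le K_1\|u_0 - x^*\| e^{-\lambda(\tau - s)} \le K_1 R\, e^{-\lambda(\tau-s)}$, using $R = \sup_{x\in V^r}\|x-x^*\|$ from \eqref{eqn:Defn_R}. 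Therefore
\[
\int_{s+\mathscr{T}}^{t} \|Dh(x(\tau,s,u_0)) - Dh(x^*)\|\,d\tau \le L_{Dh} K_1 R \int_{s+\mathscr{T}}^{\infty} e^{-\lambda(\tau - s)}\,d\tau = \frac{L_{Dh} K_1 R}{\lambda}\,e^{-\lambda\mathscr{T}} \le \frac{L_{Dh}K_1 R}{\lambda},
\]
which is a constant independent of $s$, $u_0$, and $t$. Adding the two pieces, the lemma holds with $K_2 := 2M\mathscr{T} + L_{Dh}K_1 R/\lambda$.

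The only mild subtlety — and the step I would be most careful about — is making sure all constants are genuinely uniform in $s$ and in $u_0 \in V^r$; this is exactly what $\mathscr{T}$ being a uniform hitting bound and $K_1$ being uniform (both established in Lemma~\ref{lem:CloseSolutionsBound}) buy us, and the exponential decay is what makes the improper integral over the tail finite. There is no real obstacle beyond bookkeeping: everything reduces to the exponential contraction already proved plus compactness of $V^r$.
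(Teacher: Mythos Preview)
Your proof is correct and uses the same core ingredients as the paper --- Lipschitz continuity of $Dh$ on the compact set $V^r$ together with the exponential contraction from Lemma~\ref{lem:CloseSolutionsBound} applied with $u_1 = x^*$ --- but the split at $s+\mathscr{T}$ is an unnecessary detour. The paper simply applies the Lipschitz bound $\|Dh(x(\tau,s,u_0)) - Dh(x^*)\| \le L_D\,\|x(\tau,s,u_0) - x^*\|$ on the \emph{entire} interval $[s,t]$ (the trajectory stays in $V^r$ throughout, so this is valid from time $s$ onward, not just after $s+\mathscr{T}$), then invokes Lemma~\ref{lem:CloseSolutionsBound} directly to get $\|x(\tau,s,u_0)-x^*\| \le K_1 R\, e^{-\lambda(\tau-s)}$ for all $\tau \ge s$, and integrates once to obtain $K_2 = L_D K_1 R/\lambda$. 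Your initial segment bound $2M\mathscr{T}$ is correct but redundant: the exponential estimate already covers it. (As a minor bookkeeping point, your split as written tacitly assumes $t \ge s+\mathscr{T}$; the paper's one-shot argument sidesteps this case distinction.)
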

\begin{proof}
Recall that $Dh$ is Lipschitz continuous over the compact set $V^r$ with Lipschitz constant $L_D.$ Separately, since $V$ is a Liapunov function and $u_0 \in V^r,$ we have $x(\tau, s, u_0) \in V^r$ for all $\tau \geq s.$ Hence it follows that
\begin{eqnarray*}
& & \int_{s}^{t}\|Dh(x(\tau, s, u_0) - Dh(x^*)\| \\
& \leq & L_D \int_{s}^{t} \|x(\tau, s, u_0) - x^*\| d\tau\\
& \leq & L_D K_1 \int_{s}^{t} \|u_0 - x^*\| e^{-\lambda (\tau - s)} d \tau \\
& \leq & L_D K_1 R \int_{s}^{t} e^{-\lambda (\tau - s)} d \tau,
\end{eqnarray*}
where the second relation follows from Lemma~\ref{lem:CloseSolutionsBound} on substituting $u_1 = x^*,$ while the truth of the last one can be seen using \eqref{eqn:Defn_R}. Since
\[
\int_{s}^{t} e^{-\lambda (\tau - s)} d \tau \leq \int_{s}^{\infty} e^{-\lambda (\tau - s)} d \tau = \frac{1}{\lambda},
\]
the desired result is now easy to see.
\end{proof}

\begin{lemma}
\label{lem:BoundPhi}
Let $u_0 \in V^r$ and $s \geq 0$ be arbitrary. Let $\Phi(t, s, u_0),$  $t \geq s,$ be as defined above \eqref{eqn:VariationalEquation}. Then for $t \geq s,$
\[
\|\Phi(t, s, u_0)\| \leq K_3 e^{-\lambda (t - s)},
\]
where $K_3 \geq 0$ is some constant.
\end{lemma}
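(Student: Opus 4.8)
The plan is to compare the fundamental matrix $\Phi(t,s,u_0)$ of the variational system \eqref{eqn:VariationalEquation}, which is driven by the time-varying matrix $Dh(x(t,s,u_0))$, with the fundamental matrix $e^{Dh(x^*)(t-s)}$ of the constant-coefficient linearization \eqref{eqn:LinearizedSystem} at $x^*$. The latter already decays like $\tilde K e^{-\lambda'(t-s)}$ by \eqref{eqn:BoundNormDhx*}, and since $\lambda < \lambda'$, it decays faster than the $e^{-\lambda(t-s)}$ we are after; the slack is exactly what absorbs the perturbation coming from the fact that $Dh(x(\tau,s,u_0))$ is not equal to $Dh(x^*)$. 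Crucially, Lemma~\ref{lem:BoundDh} tells us that $\int_s^t \|Dh(x(\tau,s,u_0)) - Dh(x^*)\|\,d\tau \le K_2$ is \emph{uniformly} bounded over all $t\ge s$, all $s\ge 0$, and all $u_0 \in V^r$ — the total perturbation is integrable, not merely bounded on compacts — and this is what makes a clean exponential bound possible on the infinite interval.

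The key steps, in order, are as follows. First, write $\Phi(t,s,u_0)$ via the variation-of-constants formula relative to the constant matrix $Dh(x^*)$: since $\dot\Phi = Dh(x(t,s,u_0))\Phi = Dh(x^*)\Phi + [Dh(x(t,s,u_0)) - Dh(x^*)]\Phi$ with $\Phi(s,s,u_0)=\mathbb{I}_d$, we get
\[
\Phi(t,s,u_0) = e^{Dh(x^*)(t-s)} + \int_s^t e^{Dh(x^*)(t-\tau)}\,[Dh(x(\tau,s,u_0)) - Dh(x^*)]\,\Phi(\tau,s,u_0)\,d\tau.
\]
Second, multiply through by $e^{\lambda(t-s)}$ and set $\psi(t) := e^{\lambda(t-s)}\|\Phi(t,s,u_0)\|$. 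Using \eqref{eqn:BoundNormDhx*} and $\lambda < \lambda'$ to bound $e^{\lambda(t-s)}\|e^{Dh(x^*)(t-s)}\| \le \tilde K e^{-(\lambda'-\lambda)(t-s)} \le \tilde K$, and similarly $e^{\lambda(t-\tau)}\|e^{Dh(x^*)(t-\tau)}\| \le \tilde K$, we obtain
\[
\psi(t) \le \tilde K + \tilde K\int_s^t \|Dh(x(\tau,s,u_0)) - Dh(x^*)\|\,\psi(\tau)\,d\tau.
\]
Third, apply the Gronwall inequality \cite[Corollary 1.1]{bainov1992integral} with the (integrable) multiplier $\tilde K\,\|Dh(x(\tau,s,u_0)) - Dh(x^*)\|$, giving $\psi(t) \le \tilde K \exp\!\big(\tilde K\int_s^t \|Dh(x(\tau,s,u_0)) - Dh(x^*)\|\,d\tau\big) \le \tilde K e^{\tilde K K_2}$ by Lemma~\ref{lem:BoundDh}. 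Fourth, unwind the definition of $\psi$ to conclude $\|\Phi(t,s,u_0)\| \le K_3 e^{-\lambda(t-s)}$ with $K_3 := \tilde K e^{\tilde K K_2}$, which depends only on $\lambda$, $d$, $r$ (through $K_2$), as claimed.

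I do not anticipate a serious obstacle here; the substantive work has already been done in Lemma~\ref{lem:BoundDh} (uniform integrability of the Jacobian discrepancy) and in the estimate \eqref{eqn:BoundNormDhx*} (exponential decay of the constant-coefficient semigroup with a strict margin $\lambda'-\lambda > 0$). The one point that requires a little care is ensuring the Gronwall argument is applied with the \emph{integral} of the perturbation rather than a pointwise bound on it — a pointwise bound would only give $e^{c(t-s)}$ growth and destroy the decay — but this is precisely why the uniform bound $K_2$ in Lemma~\ref{lem:BoundDh}, valid for all $t \ge s$, is the right hypothesis. A minor bookkeeping detail is that $x(\tau,s,u_0)\in V^r$ for all $\tau\ge s$ (since $V$ is a Liapunov function and $u_0\in V^r$), which is what legitimizes invoking Lemma~\ref{lem:BoundDh} with this $u_0$ and $s$.
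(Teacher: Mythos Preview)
Your proposal is correct and follows essentially the same route as the paper: variation of constants relative to the constant-coefficient system at $x^*$, then a Gronwall argument combined with Lemma~\ref{lem:BoundDh}. The only cosmetic difference is that you make the substitution $\psi(t)=e^{\lambda(t-s)}\|\Phi(t,s,u_0)\|$ explicit before applying Gronwall, whereas the paper applies Gronwall directly to the inequality with the $e^{-\lambda(t-\tau)}$ kernel (which amounts to the same thing), arriving at the identical constant $K_3=\tilde K e^{\tilde K K_2}$.
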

\begin{proof}
Observe that \eqref{eqn:VariationalEquation} can be written as
\[
\dot{y}(t) = Dh(x^*) y(t) + \left[Dh(x(t, s, u_0)) - Dh(x^*)\right] y(t)
\]
which can be thought of as a perturbation of \eqref{eqn:LinearizedSystem}. Hence, using the  variation of constants formula or equivalently the Alekseev formula (column by column), we get
\begin{multline}
\label{eqn:FundSolVarEqn}
\Phi(t, s, u_0) = e^{Dh(x^*) (t - s)} \\ +  \; \int_{s}^{t}e^{Dh(x^*) (t - \tau)} \left[Dh(x(\tau, s, u_0)) - Dh(x^*)\right]\Phi(\tau, s, u_0) d\tau.
\end{multline}
By \eqref{eqn:BoundNormDhx*},
\[
\|e^{Dh(x^*) (t - \tau)}\| \leq \tilde{K}e^{-\lambda^\prime(t - \tau)} \leq \tilde{K}e^{-\lambda (t - \tau)}, \; s \leq \tau \leq t,
\]
where $\lambda$ is as in \eqref{eqn:Defn_lambda}. Hence by taking spectral norm on both sides of \eqref{eqn:FundSolVarEqn}, we have
\begin{multline*}
\|\Phi(t, s, u_0)\| \leq \tilde{K} e^{-\lambda (t - s)} \\ +  \; \tilde{K} \int_{s}^{t}e^{-\lambda(t - \tau)} \|Dh(x(\tau, s, u_0)) - Dh(x^*)\| \; \|\Phi(\tau, s, u_0)\| d\tau.
\end{multline*}
Using Gronwall inequality \cite[Corollary 1.1]{bainov1992integral} on this, we get
\[
\|\Phi(t, s, u_0)\| \leq \tilde{K}\left(e^{-\lambda (t - s) + \tilde{K}\int_{s}^{t} \|Dh(x(\tau, s, u_0)) - Dh(x^*)\| d\tau}\right).
\]
By Lemma~\ref{lem:BoundDh}, the desired result follows.
\end{proof}

\begin{lemma}
\label{lem:BoundPhiDiff}
Let $u_0, u_1 \in V^r$ and $s \geq 0$ be arbitrary. Then for $t \geq s,$
\[
\|\Phi(t, s, u_0) - \Phi(t, s, u_1)\| \leq K_4 e^{-\lambda(t - s)} \|u_0 - u_1\|,
\]
where $\Phi(t, s, u_0)$ and $\Phi(t, s, u_1)$ are as defined above \eqref{eqn:VariationalEquation} and $K_4 \geq 0$ is some constant.
\end{lemma}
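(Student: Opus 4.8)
The plan is to bound the difference of the two fundamental matrices by writing each one via the integral equation \eqref{eqn:FundSolVarEqn} (the perturbative expansion around the constant-coefficient system \eqref{eqn:LinearizedSystem}) and then estimating the difference term by term. First I would subtract the two copies of \eqref{eqn:FundSolVarEqn}, one with $u_0$ and one with $u_1$; the leading terms $e^{Dh(x^*)(t-s)}$ cancel, leaving
\[
\Phi(t,s,u_0) - \Phi(t,s,u_1) = \int_s^t e^{Dh(x^*)(t-\tau)} \Bigl[ A(\tau,u_0)\Phi(\tau,s,u_0) - A(\tau,u_1)\Phi(\tau,s,u_1) \Bigr] d\tau,
\]
where I abbreviate $A(\tau,u) := Dh(x(\tau,s,u)) - Dh(x^*)$. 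Inside the bracket I would add and subtract the cross term $A(\tau,u_0)\Phi(\tau,s,u_1)$ to split it as $A(\tau,u_0)[\Phi(\tau,s,u_0)-\Phi(\tau,s,u_1)] + [A(\tau,u_0)-A(\tau,u_1)]\Phi(\tau,s,u_1)$.

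The second piece is the source term: using the Lipschitz continuity of $Dh$ on the compact set $V^r$ (from $\pmb{A_1}$, with constant $L_D$), Lemma~\ref{lem:CloseSolutionsBound} to get $\|x(\tau,s,u_0)-x(\tau,s,u_1)\| \leq K_1 e^{-\lambda(\tau-s)}\|u_0-u_1\|$, and Lemma~\ref{lem:BoundPhi} for $\|\Phi(\tau,s,u_1)\| \leq K_3 e^{-\lambda(\tau-s)}$, together with $\|e^{Dh(x^*)(t-\tau)}\| \leq \tilde K e^{-\lambda(t-\tau)}$ from \eqref{eqn:BoundNormDhx*} and \eqref{eqn:Defn_lambda}, I get a contribution bounded by a constant times $e^{-\lambda(t-s)}\|u_0-u_1\|$ after the $\tau$-integral (the extra $e^{-\lambda(\tau-s)}$ makes it converge). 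For the first piece I would similarly bound $\|A(\tau,u_0)\| \leq L_D K_1 R e^{-\lambda(\tau-s)}$ and use $\|e^{Dh(x^*)(t-\tau)}\| \leq \tilde K e^{-\lambda(t-\tau)}$, so that setting $\psi(t) := e^{\lambda(t-s)}\|\Phi(t,s,u_0)-\Phi(t,s,u_1)\|$ and factoring $e^{-\lambda(t-s)}$ out of the whole inequality yields
\[
\psi(t) \leq C_0\|u_0-u_1\| + \tilde K L_D K_1 R \int_s^t e^{-\lambda(\tau-s)} \psi(\tau)\, d\tau
\]
for a suitable constant $C_0$. Since the kernel $e^{-\lambda(\tau-s)}$ is integrable with $\int_s^\infty e^{-\lambda(\tau-s)}d\tau = 1/\lambda$, Gronwall's inequality \cite[Corollary 1.1]{bainov1992integral} gives $\psi(t) \leq C_0\|u_0-u_1\| \exp(\tilde K L_D K_1 R/\lambda)$, i.e.\ a uniform bound, and hence $\|\Phi(t,s,u_0)-\Phi(t,s,u_1)\| \leq K_4 e^{-\lambda(t-s)}\|u_0-u_1\|$ with $K_4$ the resulting constant.

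I expect the only real subtlety to be the algebraic bookkeeping in the add-and-subtract step and making sure every constant ($\tilde K$, $L_D$, $K_1$, $K_3$, $R$, $1/\lambda$) is genuinely uniform in $s$, $u_0$, $u_1$ — which it is, precisely because Lemmas~\ref{lem:CloseSolutionsBound}–\ref{lem:BoundPhi} already deliver $s$-independent, exponentially decaying estimates valid for all starting points in $V^r$. There is no hard analytic obstacle here; the lemma is a routine consequence of the three preceding lemmas plus one more application of Gronwall, exactly in the style already used to prove Lemma~\ref{lem:BoundPhi}.
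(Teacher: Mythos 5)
Your proof is correct, but it takes a different route from the paper's. You expand both $\Phi(t,s,u_0)$ and $\Phi(t,s,u_1)$ around the constant-coefficient linearization \eqref{eqn:LinearizedSystem} via \eqref{eqn:FundSolVarEqn}, add and subtract the cross term, and then close the estimate with an extra Gronwall step after weighting by $e^{\lambda(t-s)}$ --- essentially repeating, for the difference, the same perturbation-of-$Dh(x^*)$ scheme the paper uses to prove Lemma~\ref{lem:BoundPhi}. The paper instead compares the two nonautonomous variational systems \eqref{eqn:unperVarODE} and \eqref{eqn:perVarODE} directly: treating one as a perturbation of the other, variation of constants gives the closed-form identity
\[
\Phi(t,s,u_1)-\Phi(t,s,u_0)=\int_s^t \Psi_0(t,\tau)\bigl[Dh(x(\tau,s,u_1))-Dh(x(\tau,s,u_0))\bigr]\Psi_1(\tau,s)\,d\tau,
\]
after which the bounds $\|\Psi_0(t,\tau)\|\leq K_3 e^{-\lambda(t-\tau)}$, $\|\Psi_1(\tau,s)\|\leq K_3 e^{-\lambda(\tau-s)}$ (by the argument of Lemma~\ref{lem:BoundPhi}), the Lipschitz property of $Dh$ on $V^r$, and Lemma~\ref{lem:CloseSolutionsBound} finish the estimate with no further Gronwall application. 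The trade-off is minor: the paper's decomposition is more direct and yields the constant immediately, whereas yours needs the exponential reweighting $\psi(t)=e^{\lambda(t-s)}\|\Phi(t,s,u_0)-\Phi(t,s,u_1)\|$ and the integrability of the kernel $e^{-\lambda(\tau-s)}$ to keep the Gronwall factor uniform in $t$ and $s$ --- a point you correctly identify and handle, and all the constants you invoke ($\tilde K$, $L_D$, $K_1$, $K_3$, $R$, $1/\lambda$) are indeed uniform over $u_0,u_1\in V^r$ and $s\geq 0$, so your argument is a valid alternative proof.
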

\begin{proof}
Recall from \eqref{eqn:VariationalEquation} that $\Phi(t, s, u_0)$ is the fundamental matrix of the ODE
\begin{equation}
\label{eqn:unperVarODE}
\dot{y}_0(t) = Dh(x(t, s, u_0)) y_0(t), \; t \geq s,
\end{equation}
while $\Phi(t, s, u_1)$ is the fundamental matrix of the ODE
\begin{equation}
\label{eqn:perVarODE}
\dot{y}_1(t) = Dh(x(t, s, u_1)) y_1(t), \; t \geq s.
\end{equation}
For $t \geq s^\prime \geq s,$ let $\Psi_0(t, s^\prime)$ denote the fundamental matrix of  \eqref{eqn:unperVarODE} satisfying
\[
\Psi_0(s^\prime, s^\prime) = \mathbb{I}_d.
\]
Similarly define $\Psi_1(t, s^\prime)$ with respect to \eqref{eqn:perVarODE}. Treating \eqref{eqn:perVarODE} as a perturbation of \eqref{eqn:unperVarODE}, it follows by using  variation of constants formula or equivalently Alekseev's formula (column by column) that
\begin{eqnarray*}
& & \Phi(t, s, u_1) - \Phi(t, s, u_0)  \\
& = & \Psi_1(t, s) - \Psi_0(t, s)\\
& = & \int_{s}^{t} \Psi_0(t, \tau) [Dh(x(\tau, s, u_1)) - Dh(x(\tau, s, u_0))]\Psi_1(\tau, s) d \tau.
\end{eqnarray*}

Since $u_0, u_1 \in V^r,$ it follows by arguing as in Lemma~\ref{lem:BoundPhi} that
\[
\|\Psi_0(t, \tau)\| \leq K_3 e^{-\lambda(t - \tau)}
\]
and
\[
\|\Psi_1(\tau, s)\| \leq K_3 e^{-\lambda(\tau - s)}
\]
Also recall that $Dh$ is Lipschitz continuous on $V^r$ with Lipschitz constant $L_D.$    Hence we have
\[
\|Dh(x(\tau, s, u_1)) - Dh(x(\tau, s, u_0))\| \leq L_D \|x(\tau, s, u_1) - x(\tau, s, u_0)\|.
\]
Putting all the above relations together, it follows that there exists some constant $K_4^\prime \geq 0$ such that
\[
\| \Phi(t, s, u_1) - \Phi(t, s, u_0) \| \leq K_4^\prime e^{-\lambda(t - s)} \int_{s}^{t} \|x(\tau, s, u_1) - x(\tau, s, u_0)\| d\tau.
\]
Using Lemma~\ref{lem:CloseSolutionsBound}, the desired result is now easy to see.
\end{proof}

\begin{lemma}
\label{lem:IntSquareExpTerm_a_k}
Let $k, n$ with $n_0 \leq k < k + 1 \leq n$ be arbitrary. Then there exists a constant $K_5 \geq 0$ such that, on the event $G_n,$
\[
\int_{t_{k}}^{t_{k + 1}} e^{-\lambda(t_n - s)} \|\bar{x}(s) - \bar{x}(t_k)\| ds \leq K_5 \left[1 + \|M_{k + 1}\| \right] e^{-\lambda(t_n - t_{k + 1})} a_k^2.
\]
\end{lemma}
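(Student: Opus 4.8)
The plan is to reduce the whole estimate to the explicit form of the interpolated trajectory on a single interval $[t_k,t_{k+1}]$, combined with the uniform boundedness of $h$ on the compact set $V^r$ that the event $G_n$ supplies. First I would use the interpolation rule \eqref{eqn:LinearInterpolation} together with the update \eqref{eqn:SAIterates}: for $s\in[t_k,t_{k+1}]$,
\[
\bar{x}(s) - \bar{x}(t_k) = \frac{s - t_k}{a_k}\bigl[\bar{x}(t_{k+1}) - \bar{x}(t_k)\bigr] = (s - t_k)\bigl[h(\bar{x}(t_k)) + M_{k+1}\bigr],
\]
so that $\|\bar{x}(s) - \bar{x}(t_k)\| \le (s - t_k)\bigl(\|h(\bar{x}(t_k))\| + \|M_{k+1}\|\bigr)$ pointwise on $[t_k,t_{k+1}]$.

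Next I would invoke the conditioning event $G_n$. Since $n_0 \le k \le k+1 \le n$, we have $t_k \in [t_{n_0}, t_n]$, so on $G_n$ (see \eqref{eqn:Defn_G_n}) the point $\bar{x}(t_k)$ lies in $V^r$. Because $h \in \mathcal{C}^2$ by $\pmb{A_1}$, $h$ is in particular continuous, hence bounded on the compact set $V^r$; set $M_h := \sup_{x \in V^r}\|h(x)\| < \infty$. Therefore on $G_n$ one has $\|h(\bar{x}(t_k))\| \le M_h$, and consequently $\|\bar{x}(s) - \bar{x}(t_k)\| \le (s - t_k)\,\max\{M_h,1\}\,\bigl(1 + \|M_{k+1}\|\bigr)$ for every $s\in[t_k,t_{k+1}]$.

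Finally I would substitute this bound into the integral and use two elementary estimates valid for $s\in[t_k,t_{k+1}]$: since $\lambda > 0$ and $t_{k+1}\le t_n$, we have $e^{-\lambda(t_n - s)} = e^{-\lambda(t_n - t_{k+1})}e^{-\lambda(t_{k+1}-s)} \le e^{-\lambda(t_n - t_{k+1})}$; and $\int_{t_k}^{t_{k+1}}(s - t_k)\,ds = a_k^2/2$. Hence
\[
\int_{t_k}^{t_{k+1}} e^{-\lambda(t_n - s)} \|\bar{x}(s) - \bar{x}(t_k)\| \, ds \le \max\{M_h,1\}\bigl(1 + \|M_{k+1}\|\bigr)\, e^{-\lambda(t_n - t_{k+1})}\, \frac{a_k^2}{2},
\]
and the claim follows with $K_5 := \max\{M_h,1\}/2$.

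The argument is essentially a single computation once the pieces are assembled, so there is no genuine obstacle; the only point requiring care is the appeal to $G_n$ to keep $\bar{x}(t_k)$ inside $V^r$, which is exactly what converts the a priori unbounded factor $\|h(\bar{x}(t_k))\|$ into the fixed constant $M_h$. Note that $K_5$ then depends only on $\lambda$ and on $V^r$ (equivalently $r$), consistent with the dependence asserted in Theorem~\ref{thm:MainResult}.
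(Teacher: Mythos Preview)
Your proposal is correct and follows essentially the same route as the paper: use the interpolation formula to get $\|\bar{x}(s)-\bar{x}(t_k)\|\le(s-t_k)(\|h(\bar{x}(t_k))\|+\|M_{k+1}\|)$, bound $\|h(\bar{x}(t_k))\|$ uniformly on $V^r$ via $G_n$, and then control the remaining integral by $e^{-\lambda(t_n-t_{k+1})}a_k^2$. The only cosmetic difference is that the paper bounds $\|h(\bar{x}(t_k))\|$ by $L_hR$ using $h(x^*)=0$ and Lipschitz continuity on $V^r$, whereas you invoke directly $\sup_{V^r}\|h\|$; both yield a valid constant $K_5$.
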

\begin{proof}
From \eqref{eqn:LinearInterpolation}, note that
\[
\|\bar{x}(s) - \bar{x}(t_k)\| = \frac{(s - t_k)}{a_k} \|\bar{x}(t_{k + 1}) - \bar{x}(t_k)\| \leq (s - t_{k}) [\|h(\bar{x}(t_k))\| + \|M_{k + 1}\|],
\]
where the last inequality is due to \eqref{eqn:SAIterates}. On $G_n,$ and since $n_0\leq k \leq n - 1,$ note that $\bar{x}(t_k) \in V^r.$ Combining this with the fact that $h(x^*) = 0$ and $h$ is Lipschitz over $V^r,$  it follows that, on $G_n,$
\[
\|h(\bar{x}(t_k))\| = \|h(\bar{x}(t_k)) - h(x^*)\| \leq L_h R.
\]
Also note that
\[
\int_{t_k}^{t_{k + 1}}(s - t_k) e^{-\lambda(t_n - s)}ds \leq e^{-\lambda(t_n - t_{k + 1})} a_k^2.
\]
Combining the above relations, the desired result is easy to see.
\end{proof}

In the next two results, we respectively obtain bounds on $W_n$ and $\tilde{S}_n - S_n,$ where $W_n, \tilde{S}_n,$ and $S_n$ are as in \eqref{eqn:Defn_Wn}, \eqref{eqn:Defn_tildeSn}, and \eqref{eqn:Defn_Sn}.

\begin{lemma}
\label{lem:BoundW}
Let $n \geq n_0$ be arbitrary. Then on $G_n,$
\[
\|W_{n}\| \leq K_6\left[\sup_{n_0 \leq k \leq n - 1}a_k + \sup_{n_0 \leq k \leq n - 1} a_k \|M_{k + 1}\| \right],
\]
where $K_6 \geq 0$ is some constant.
\end{lemma}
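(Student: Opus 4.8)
The plan is to bound $\|W_n\|$ directly from its definition \eqref{eqn:Defn_Wn}. Each summand is $\int_{t_k}^{t_{k+1}}\Phi(t_n,s,\bar{x}(s))[h(\bar{x}(t_k)) - h(\bar{x}(s))]\,ds$. On the event $G_n$ we have $\bar{x}(s)\in V^r$ for all $s\in[t_{n_0},t_n]$ (here one uses that the linear interpolant stays in $V^r$ on $G_n$, which follows from the definition of $G_n$), so two tools from the supplementary lemmas apply: first, $\|\Phi(t_n,s,\bar{x}(s))\|\le K_3 e^{-\lambda(t_n-s)}$ by Lemma~\ref{lem:BoundPhi}; second, $h$ is Lipschitz on the compact set $V^r$ with constant $L_h$, so $\|h(\bar{x}(t_k)) - h(\bar{x}(s))\|\le L_h\|\bar{x}(s) - \bar{x}(t_k)\|$. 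Combining these inside the integral gives, on $G_n$,
\[
\|W_n\| \le K_3 L_h \sum_{k=n_0}^{n-1}\int_{t_k}^{t_{k+1}} e^{-\lambda(t_n-s)}\|\bar{x}(s) - \bar{x}(t_k)\|\,ds.
\]

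Next I would invoke Lemma~\ref{lem:IntSquareExpTerm_a_k}, which bounds each of these integrals on $G_n$ by $K_5[1 + \|M_{k+1}\|]e^{-\lambda(t_n-t_{k+1})}a_k^2$. This reduces the problem to estimating
\[
\sum_{k=n_0}^{n-1} e^{-\lambda(t_n-t_{k+1})}a_k^2\bigl(1 + \|M_{k+1}\|\bigr).
\]
Writing $t_n - t_{k+1} = \sum_{i=k+1}^{n-1}a_i$ and using $a_k\le 1$ (from $\pmb{A_2}$, \eqref{eqn:StepsizeAssumption3}), one has $a_k^2\le a_k$, so it suffices to bound $\sum_{k=n_0}^{n-1}e^{-\lambda\sum_{i=k+1}^{n-1}a_i}a_k$ times the factor $1 + \sup_k\|M_{k+1}\|$ (or, keeping the $M$-term separate, $\sup_k a_k\|M_{k+1}\|$ times $\sum_k e^{-\lambda\sum_{i=k+1}^{n-1}a_i}$). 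The key remaining estimate is therefore the geometric-type bound $\sum_{k=n_0}^{n-1}e^{-\lambda\sum_{i=k+1}^{n-1}a_i}a_k \le C(\lambda)$ uniformly in $n_0,n$: since $a_k = t_{k+1}-t_k$, this sum is a Riemann-sum lower estimate of $\int_0^{t_{n-1}-t_{n_0}}e^{-\lambda u}\,du\le 1/\lambda$, up to a correction from $a_k\le 1$ that contributes a bounded multiplicative factor (e.g.\ $e^{\lambda}$). Folding all the accumulated constants $K_3,L_h,K_5,C(\lambda)$ into a single $K_6$ then yields
\[
\|W_n\| \le K_6\Bigl[\sup_{n_0\le k\le n-1}a_k + \sup_{n_0\le k\le n-1}a_k\|M_{k+1}\|\Bigr]
\]
on $G_n$, as claimed.

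The only mildly delicate point is bookkeeping: one must be careful that the exponential weight $e^{-\lambda(t_n-t_{k+1})}$ genuinely makes $\sum_k e^{-\lambda(t_n-t_{k+1})}a_k$ bounded \emph{uniformly in $n$} — this is exactly where Alekseev's formula pays off relative to a crude Gronwall bound, and it hinges on $\sum_n a_n=\infty$ only indirectly (boundedness of the sum needs nothing beyond $a_k\le 1$, while $\sum a_n=\infty$ will matter elsewhere). I expect no real obstacle here; the argument is a routine assembly of Lemmas~\ref{lem:BoundPhi} and~\ref{lem:IntSquareExpTerm_a_k} together with the elementary Riemann-sum comparison, with the $\sup_k a_k\|M_{k+1}\|$ term isolated precisely because $\|M_{k+1}\|$ cannot be controlled pathwise and must be carried through to the concentration step in Section~\ref{sec:MainProof}.
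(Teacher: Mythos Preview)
Your plan is essentially the paper's proof: triangle inequality on \eqref{eqn:Defn_Wn}, Lipschitz of $h$ on $V^r$, Lemma~\ref{lem:BoundPhi} for $\|\Phi\|$, Lemma~\ref{lem:IntSquareExpTerm_a_k} for the inner integral, and then the Riemann-sum bound $\sum_{k=n_0}^{n-1}e^{-\lambda(t_n-t_{k+1})}a_k\le e^{\lambda}/\lambda$. The paper does exactly this.

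One small bookkeeping slip in your middle paragraph: after reaching $\sum_k[1+\|M_{k+1}\|]e^{-\lambda(t_n-t_{k+1})}a_k^2$, do \emph{not} first use $a_k^2\le a_k$ and then pull out $(1+\sup_k\|M_{k+1}\|)$ --- that yields a bound with $\sup_k\|M_{k+1}\|$ rather than $\sup_k a_k\|M_{k+1}\|$, which is strictly weaker and not what the lemma asserts. Likewise your parenthetical ``$\sup_k a_k\|M_{k+1}\|$ times $\sum_k e^{-\lambda\sum a_i}$'' is missing an $a_k$ in the sum (without it the sum need not be bounded). The correct factoring, as in the paper, is to split $a_k^2[1+\|M_{k+1}\|]=a_k\cdot a_k + (a_k\|M_{k+1}\|)\cdot a_k$ and pull out $\sup_k a_k$ and $\sup_k a_k\|M_{k+1}\|$ respectively, leaving the \emph{same} weighted sum $\sum_k e^{-\lambda(t_n-t_{k+1})}a_k$ in both terms. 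With that fix your argument is identical to the paper's.
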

\begin{proof}
Recall that $h$ is Lipschitz over $V^r$ with Lipschitz constant $L_h.$ Also, observe that
\[
\|W_n\| \leq \sum_{k = n_0}^{n - 1} \int_{t_k}^{t_{k + 1}} \|\Phi(t_n, s, \bar{x}(s))\| \; \|h(\bar{x}(t_k)) - h(\bar{x}(s))\| ds.
\]
Therefore, the following relations hold on the event $G_n.$ First, $\bar{x}(s) \in V^r$ for each $s \in [t_{n_0}, t_{n}].$  Hence,
\[
\|W_n\| \leq L_h \sum_{k = n_0}^{n - 1} \int_{t_k}^{t_{k + 1}} \|\Phi(t_n, s, \bar{x}(s))\| \; \|\bar{x}(t_k) - \bar{x}(s)\| ds.
\]
Using Lemma~\ref{lem:BoundPhi}, it now follows that
\[
\|W_n\| \leq L_h K_3 \sum_{k = n_0}^{n - 1} \int_{t_k}^{t_{k + 1}}e^{-\lambda (t_n - s)}\; \|\bar{x}(t_k) - \bar{x}(s)\| ds.
\]
Applying Lemma~\ref{lem:IntSquareExpTerm_a_k} to this gives
\[
\|W_n\| \leq L_h K_3 K_5\sum_{k = n_0}^{n - 1}[1 + \|M_{k + 1}\|] e^{-\lambda(t_n - t_{k + 1})} a_k^2.
\]
From this, it follows that there exists some constant $K_6^\prime \geq 0$ so that
\[
\|W_n\| \leq K_6^\prime\left[\sup_{n_0 \leq k \leq n - 1}a_k + \sup_{n_0 \leq k \leq n - 1} a_k \|M_{k + 1}\| \right] \sum_{k = n_0}^{n - 1} e^{-\lambda(t_n - t_{k + 1})} a_k.
\]
But observe that
\[
\sum_{k = n_0}^{n - 1} e^{-\lambda (t_n - t_{k + 1})}a_k  \leq \left[\sup_{k \geq 0} e^{\lambda a_k}\right] \int_{t_{n_0}}^{t_n} e^{-\lambda (t_n - s)} ds \leq \left[\sup_{k \geq 0} e^{\lambda a_k}\right]\frac{1}{\lambda} \leq \frac{e^{\lambda}}{\lambda},
\]
where the last inequality is due to \eqref{eqn:StepsizeAssumption3}. The desired result now follows.
\end{proof}

\begin{lemma}
\label{lem:BoundDiff_Sn_tildeS_n}
Let $n \geq n_0$ be arbitrary. Then on $G_{n},$
\[
\| \tilde{S}_n - S_n \|\leq  K_7\left[\sup_{n_0 \leq k \leq n - 1}a_k \|M_{k + 1}\| + \sup_{n_0 \leq k \leq n - 1} a_k \|M_{k + 1}\|^2\right],
\]
where $K_7 \geq 0$ is some constant.
\end{lemma}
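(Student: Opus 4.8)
The plan is to mirror the proof of Lemma~\ref{lem:BoundW}, with Lemma~\ref{lem:BoundPhi} replaced by Lemma~\ref{lem:BoundPhiDiff}. First I would subtract the defining expressions \eqref{eqn:Defn_tildeSn} and \eqref{eqn:Defn_Sn}, so that
\[
\tilde{S}_n - S_n = \sum_{k = n_0}^{n - 1} \int_{t_k}^{t_{k + 1}} \left[\Phi(t_n, s, \bar{x}(s)) - \Phi(t_n, s, \bar{x}(t_k))\right] ds \; M_{k + 1},
\]
and then pass to norms to get
\[
\|\tilde{S}_n - S_n\| \leq \sum_{k = n_0}^{n - 1} \|M_{k + 1}\| \int_{t_k}^{t_{k + 1}} \|\Phi(t_n, s, \bar{x}(s)) - \Phi(t_n, s, \bar{x}(t_k))\| \, ds.
\]

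Next, I would note that on $G_n$, since $n_0 \leq k \leq n - 1$ and $s \in [t_k, t_{k + 1}] \subseteq [t_{n_0}, t_n]$, both $\bar{x}(s)$ and $\bar{x}(t_k)$ lie in $V^r$; hence Lemma~\ref{lem:BoundPhiDiff}, applied with initial time $s$, final time $t_n$, and initial conditions $\bar{x}(s)$ and $\bar{x}(t_k)$, gives
\[
\|\Phi(t_n, s, \bar{x}(s)) - \Phi(t_n, s, \bar{x}(t_k))\| \leq K_4 \, e^{-\lambda(t_n - s)} \, \|\bar{x}(s) - \bar{x}(t_k)\|.
\]
Substituting this and then invoking Lemma~\ref{lem:IntSquareExpTerm_a_k} to bound $\int_{t_k}^{t_{k + 1}} e^{-\lambda(t_n - s)} \|\bar{x}(s) - \bar{x}(t_k)\| \, ds$ by $K_5 [1 + \|M_{k + 1}\|] e^{-\lambda(t_n - t_{k + 1})} a_k^2$, I obtain on $G_n$
\[
\|\tilde{S}_n - S_n\| \leq K_4 K_5 \sum_{k = n_0}^{n - 1} \left[\|M_{k + 1}\| + \|M_{k + 1}\|^2\right] e^{-\lambda(t_n - t_{k + 1})} a_k^2.
\]

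Finally, I would write $a_k^2 = a_k \cdot a_k$, factor out $\sup_{n_0 \leq k \leq n - 1}(a_k \|M_{k + 1}\| + a_k \|M_{k + 1}\|^2)$, and bound the residual sum $\sum_{k = n_0}^{n - 1} e^{-\lambda(t_n - t_{k + 1})} a_k$ by $e^{\lambda}/\lambda$, exactly as in the last display of the proof of Lemma~\ref{lem:BoundW} (comparing the sum to $\int_{t_{n_0}}^{t_n} e^{-\lambda(t_n - s)} ds$ and using $\sup_k e^{\lambda a_k} \leq e^{\lambda}$ from \eqref{eqn:StepsizeAssumption3}). This yields the claimed inequality with $K_7 = K_4 K_5 e^{\lambda}/\lambda$. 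I expect no real obstacle here: the step is essentially routine given the earlier lemmas. The only points needing care are verifying that the hypotheses of Lemmas~\ref{lem:BoundPhiDiff} and \ref{lem:IntSquareExpTerm_a_k} hold — which is precisely what conditioning on $G_n$ provides — and tracking how the extra factor $\|M_{k + 1}\|$, absent in Lemma~\ref{lem:BoundW}, turns the linear noise term into the additional term $\sup_k a_k \|M_{k + 1}\|^2$.
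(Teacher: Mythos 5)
Your proof is correct and follows essentially the same route as the paper: subtract \eqref{eqn:Defn_tildeSn} and \eqref{eqn:Defn_Sn}, apply Lemma~\ref{lem:BoundPhiDiff} on $G_n$, then Lemma~\ref{lem:IntSquareExpTerm_a_k}, and finish by factoring out the suprema and bounding $\sum_{k} e^{-\lambda(t_n - t_{k+1})} a_k$ by $e^{\lambda}/\lambda$ exactly as in Lemma~\ref{lem:BoundW}. The paper compresses the last step into ``arguing as in Lemma~\ref{lem:BoundW}''; your explicit expansion of $[1 + \|M_{k+1}\|]\|M_{k+1}\|$ into the two supremum terms is precisely what is intended.
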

\begin{proof}
Observe that
\[
\|\tilde{S}_n - S_n\| \leq \sum_{k = n_0}^{n - 1}\left[ \int_{t_k}^{t_{k + 1}} \|\Phi(t_n, s, \bar{x}(s)) - \Phi(t_n, s, \bar{x}(t_k))\| ds \right] \| M_{k + 1} \|.
\]
Hence, the following statements hold on the event $G_n.$ Clearly, $\bar{x}(s) \in V^r$ for each $s \in [t_{n_0}, t_{n}].$ Consequently, using Lemma~\ref{lem:BoundPhiDiff}, it follows that
\[
\|\tilde{S}_{n} - S_{n} \| \leq K_4 \sum_{k = n_0}^{n - 1} \left[\int_{t_k}^{t_{k + 1}} \|\bar{x}(s) - \bar{x}(t_k)\| e^{-\lambda (t_n - s)} ds \right] \|M_{k + 1}\|.
\]
Applying Lemma~\ref{lem:IntSquareExpTerm_a_k} to this shows that
\[
\|\tilde{S}_{n} - S_{n} \| \leq K_4 K_5 \sum_{k = n_0}^{n - 1} [1 + \|M_{k + 1}\|] \;  \|M_{k + 1}\| \; e^{-\lambda(t_n - t_{k + 1})} a_k^2
\]
Arguing now as in Lemma~\ref{lem:BoundW}, the desired result is easy to see.
\end{proof}

Assuming the event $G_n$ occurs, we now obtain upper bounds on $\|\bar{x}(t_n) - x(t_n, t_{n_0}, \bar{x}(t_{n_0}))\|$ and $\|\bar{x}(t_{n + 1}) - x(t_{n + 1}, t_{n_0}, \bar{x}(t_{n_0}))\|$ and use this to obtain bounds on $\rho_{n + 1}$ and $\rho_{n + 1}^*.$

\begin{lemma}
\label{lem:Bound_SA_ODE_t_n}
Let $n \geq n_0$ be arbitrary. Then on $G_n,$
\begin{multline*}
\|\bar{x}(t_n) - x(t_n, t_{n_0}, \bar{x}(t_{n_0}))\| \leq \\ K_8 \left[ \|S_n\| + \sup_{n_0 \leq k \leq n - 1} a_k + \sup_{n_0 \leq k \leq n - 1} a_{k} \|M_{k + 1}\|^2 \right],
\end{multline*}
where $K_{8} \geq 0$ is some constant.
\end{lemma}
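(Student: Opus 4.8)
The plan is to read off the result directly from the decomposition of $\bar{x}(t_n)$ established in Theorem~\ref{thm:AlternativeExp_xnViaAlekseev} and then control each summand using the lemmas already proved in this section. From \eqref{eqn:AlternativeExp_xnViaAlekseev} and the triangle inequality,
\[
\|\bar{x}(t_n) - x(t_n, t_{n_0}, \bar{x}(t_{n_0}))\| \;\le\; \|W_n\| + \|S_n\| + \|\tilde{S}_n - S_n\|,
\]
which holds always since \eqref{eqn:AlternativeExp_xnViaAlekseev} is an identity. The idea is then to restrict to the event $G_n$ and invoke Lemma~\ref{lem:BoundW} to bound $\|W_n\|$ and Lemma~\ref{lem:BoundDiff_Sn_tildeS_n} to bound $\|\tilde{S}_n - S_n\|$; writing $\sup_k$ for $\sup_{n_0 \le k \le n-1}$, this gives, on $G_n$,
\[
\|\bar{x}(t_n) - x(t_n, t_{n_0}, \bar{x}(t_{n_0}))\| \;\le\; \|S_n\| + K_6\Big[\sup_k a_k + \sup_k a_k\|M_{k+1}\|\Big] + K_7\Big[\sup_k a_k\|M_{k+1}\| + \sup_k a_k\|M_{k+1}\|^2\Big].
\]

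The only step that is not purely mechanical is disposing of the term that is linear in $\|M_{k+1}\|$, since the target bound contains only $\sup_k a_k$ and $\sup_k a_k\|M_{k+1}\|^2$. For this I would use the elementary inequality $u \le 1 + u^2$, valid for every $u \ge 0$: taking $u = \|M_{k+1}\|$ and multiplying through by $a_k$ yields $a_k\|M_{k+1}\| \le a_k + a_k\|M_{k+1}\|^2$, and hence, by subadditivity of the supremum, $\sup_k a_k\|M_{k+1}\| \le \sup_k a_k + \sup_k a_k\|M_{k+1}\|^2$. Substituting this into the displayed bound and collecting the three surviving quantities $\|S_n\|$, $\sup_k a_k$, and $\sup_k a_k\|M_{k+1}\|^2$ produces the claimed estimate with, e.g., $K_8 := \max\{1,\, 2K_6 + 2K_7\}$.

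I do not expect a genuine obstacle here: the lemma is essentially a bookkeeping step that packages Lemmas~\ref{lem:BoundW} and~\ref{lem:BoundDiff_Sn_tildeS_n} together with the Alekseev identity into the precise form needed later for bounding $\rho_{n+1}$ and $\rho_{n+1}^*$. The only thing to be careful about is keeping track of which bounds are unconditional and which hold only on $G_n$ (the $W_n$ and $\tilde{S}_n - S_n$ bounds need $\bar{x}(s) \in V^r$ on $[t_{n_0}, t_n]$), and making sure the final constant $K_8$ is chosen large enough to absorb both the original coefficients $K_6, K_7$ and the extra contributions coming from the inequality $u \le 1 + u^2$.
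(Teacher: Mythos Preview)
Your proposal is correct and follows the paper's proof essentially verbatim: the paper also applies the triangle inequality to the Alekseev decomposition, invokes Lemmas~\ref{lem:BoundW} and~\ref{lem:BoundDiff_Sn_tildeS_n} on $G_n$, and then uses the inequality $\|x\|\le 1+\|x\|^2$ to eliminate the linear $\|M_{k+1}\|$ term. The only difference is that you spell out the absorption of the linear term and give an explicit choice of $K_8$, whereas the paper simply says ``the desired result is easy to see.''
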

\begin{proof}
From Theorem~\ref{thm:AlternativeExp_xnViaAlekseev}, we have
\[
\|\bar{x}(t_n) - x(t_n, t_{n_0}, \bar{x}(t_{n_0}))\| \leq \|W_n\| + \|S_{n}\| + \|\tilde{S}_{n} - S_{n}\|.
\]
Using Lemmas~\ref{lem:BoundW}, \ref{lem:BoundDiff_Sn_tildeS_n}, and the fact that $\|x\| \leq 1 + \| x \|^2,$ the desired result is easy to see.
\end{proof}

\begin{lemma}
\label{lem:Bound_SA_ODE_t_n+1}
Let $n \geq n_0$ be arbitrary. Then on $G_n,$
\begin{multline*}
\|\bar{x}(t_{n + 1}) - x(t_{n + 1}, t_{n_0}, \bar{x}(t_{n_0}))\| \leq \\
K_9 \left[\| S_n \| + \sup_{n_0 \leq k \leq n}a_k  + \sup_{n_0 \leq k \leq n}a_k \|M_{k + 1}\|^2 \right],
\end{multline*}
where $K_9 \geq 0$ is some constant.
\end{lemma}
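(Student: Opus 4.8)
The plan is to derive the bound at time $t_{n+1}$ from the one already obtained at time $t_n$ in Lemma~\ref{lem:Bound_SA_ODE_t_n}, paying only a single extra ``one-step'' error for the interval $[t_n,t_{n+1}]$. One cannot simply apply Lemma~\ref{lem:Bound_SA_ODE_t_n} with $n$ replaced by $n+1$, because the event $G_n$ forces $\bar x(t)\in V^r$ only on $[t_{n_0},t_n]$ and says nothing about $\bar x(t_{n+1})$; hence the step from $t_n$ to $t_{n+1}$ must be handled by a coarser estimate that does not presuppose $\bar x(t_{n+1})\in V^r$.

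First I would use the flow (semigroup) property of \eqref{eqn:LimitingODE}, namely $x(t_{n+1},t_{n_0},\bar x(t_{n_0}))=x(t_{n+1},t_n,x(t_n,t_{n_0},\bar x(t_{n_0})))$, and insert the intermediate point $x(t_{n+1},t_n,\bar x(t_n))$. By the triangle inequality,
\[
\|\bar x(t_{n+1})-x(t_{n+1},t_{n_0},\bar x(t_{n_0}))\|\le \|\bar x(t_{n+1})-x(t_{n+1},t_n,\bar x(t_n))\|+\|x(t_{n+1},t_n,\bar x(t_n))-x(t_{n+1},t_n,x(t_n,t_{n_0},\bar x(t_{n_0})))\|;
\]
call the two terms on the right (I) and (II). For (II): on $G_n$ we have $\bar x(t_n)\in V^r$, and since $G_n\subseteq G_{n_0}$ gives $\bar x(t_{n_0})\in V^r$ and $V$ is a Liapunov function, also $x(t_n,t_{n_0},\bar x(t_{n_0}))\in V^r$. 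Thus Lemma~\ref{lem:CloseSolutionsBound} applied with $s=t_n$ bounds (II) by $K_1\|\bar x(t_n)-x(t_n,t_{n_0},\bar x(t_{n_0}))\|e^{-\lambda a_n}\le K_1\|\bar x(t_n)-x(t_n,t_{n_0},\bar x(t_{n_0}))\|$, which in turn is at most $K_1K_8\bigl[\|S_n\|+\sup_{n_0\le k\le n-1}a_k+\sup_{n_0\le k\le n-1}a_k\|M_{k+1}\|^2\bigr]$ by Lemma~\ref{lem:Bound_SA_ODE_t_n}.

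Next I would bound (I) directly. From \eqref{eqn:SAIterates}, $\bar x(t_{n+1})=\bar x(t_n)+a_nh(\bar x(t_n))+a_nM_{n+1}$, whereas $x(t_{n+1},t_n,\bar x(t_n))=\bar x(t_n)+\int_{t_n}^{t_{n+1}}h(x(s,t_n,\bar x(t_n)))\,ds$. On $G_n$ we have $\bar x(t_n)\in V^r$, so the trajectory $x(\cdot,t_n,\bar x(t_n))$ remains in $V^r$; using $h(x^*)=0$, the Lipschitz constant $L_h$ of $h$ on $V^r$, and \eqref{eqn:Defn_R}, both $\|a_nh(\bar x(t_n))\|$ and $\|\int_{t_n}^{t_{n+1}}h(x(s,t_n,\bar x(t_n)))\,ds\|$ are at most $L_hR\,a_n$. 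Hence (I) $\le 2L_hR\,a_n+a_n\|M_{n+1}\|\le(2L_hR+1)a_n+a_n\|M_{n+1}\|^2$, the last step using $\|y\|\le 1+\|y\|^2$. Adding the bounds for (I) and (II), using $\sup_{n_0\le k\le n-1}(\cdot)\le\sup_{n_0\le k\le n}(\cdot)$ and $a_n\|M_{n+1}\|^2\le\sup_{n_0\le k\le n}a_k\|M_{k+1}\|^2$, and absorbing all constants into one $K_9\ge 0$ yields the stated inequality.

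The only subtle point, and the one I would be careful about, is exactly the observation flagged above: Lemma~\ref{lem:Bound_SA_ODE_t_n} cannot be bootstrapped to index $n+1$ because $G_n$ does not control $\bar x$ past $t_n$. Everything else is routine — a single Euler step contributes merely an $O(a_n)$ term plus the isolated noise contribution $a_n\|M_{n+1}\|^2$, which is precisely why the suprema in the statement run up to $n$ rather than $n-1$.
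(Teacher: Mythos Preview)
Your argument is correct, but the paper's proof is more direct and avoids the detour through the semigroup property and Lemma~\ref{lem:CloseSolutionsBound}. The paper simply subtracts the two identities
\[
\bar x(t_{n+1})=\bar x(t_n)+a_nh(\bar x(t_n))+a_nM_{n+1},\qquad
x(t_{n+1},t_{n_0},\bar x(t_{n_0}))=x(t_n,t_{n_0},\bar x(t_{n_0}))+\int_{t_n}^{t_{n+1}}h(x(s,t_{n_0},\bar x(t_{n_0})))\,ds,
\]
to obtain
\[
\|\bar x(t_{n+1})-x(t_{n+1},t_{n_0},\bar x(t_{n_0}))\|\le\|\bar x(t_n)-x(t_n,t_{n_0},\bar x(t_{n_0}))\|+a_n\|M_{n+1}\|+\int_{t_n}^{t_{n+1}}\|h(\bar x(t_n))-h(x(s,t_{n_0},\bar x(t_{n_0})))\|\,ds,
\]
and then bounds the integral by $2L_hRa_n$ exactly as you did. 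In effect, the paper gets the coefficient $1$ in front of $\|\bar x(t_n)-x(t_n,t_{n_0},\bar x(t_{n_0}))\|$ where your split produces the (larger) constant $K_1$; since everything is eventually absorbed into $K_9$ this makes no difference to the statement. Your insertion of the intermediate point $x(t_{n+1},t_n,\bar x(t_n))$ and appeal to Lemma~\ref{lem:CloseSolutionsBound} is a valid alternative, just slightly more elaborate than necessary.
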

\begin{proof}
Using \eqref{eqn:SAIterates} and
\[
x(t_{n + 1}, t_{n_0}, \bar{x}(t_{n_0})) = x(t_{n}, t_{n_0}, \bar{x}(t_{n_0})) + \int_{t_n}^{t_{n + 1}} h(x(s, t_{n_0}, \bar{x}(t_{n_0}))) ds,
\]
it follows from the triangle inequality that
\begin{multline*}
\|\bar{x}(t_{n + 1}) - x(t_{n + 1}, t_{n_0}, \bar{x}(t_{n_0}))\| \leq \|\bar{x}(t_{n}) - x(t_{n}, t_{n_0}, \bar{x}(t_{n_0}))\| \\
+ \; a_{n} \|M_{n + 1}\| + \int_{t_n}^{t_{n + 1}}\| h(\bar{x}(t_n)) - h(x(s, t_{n_0}, \bar{x}(t_{n_0})))\| ds.
\end{multline*}
But $h$ is Lipschitz over $V^r$ with Lipschitz constant $L_h.$ Also, on $G_n,$ $\bar{x}(t_n)$ and $x(s, t_{n_0}, \bar{x}(t_{n_0})),$ $s \geq t_{n_0},$ lie in $V^r.$ Hence it follows using \eqref{eqn:Defn_R} that
\[
\int_{t_n}^{t_{n + 1}}\| h(\bar{x}(t_n)) - h(x(s, t_{n_0}, \bar{x}(t_{n_0})))\| ds \leq 2 L_h R a_{n}.
\]
Substituting this in the above relation and using Lemma~\ref{lem:Bound_SA_ODE_t_n}, the desired result is easy to see.
\end{proof}

\begin{lemma}
\label{lem:Bound_rho_n+1}
Let $n \geq n_0$ be arbitrary. Then on $G_n,$
\[
\rho_{n + 1} \leq K_{10} \left[\| S_n \| + \sup_{n_0 \leq k \leq n}a_k  + \sup_{n_0 \leq k \leq n}a_k \|M_{k + 1}\|^2 \right],
\]
where $K_{10} \geq 0$ is some constant.
\end{lemma}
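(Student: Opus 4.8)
The plan is to reduce the supremum over the whole interval $[t_n,t_{n+1}]$ in the definition \eqref{eqn:Defn_Rho} of $\rho_{n+1}$ to the two endpoints $t_n$ and $t_{n+1}$, exploiting the piecewise-linear structure of $\bar{x}(\cdot)$, and then to quote Lemmas~\ref{lem:Bound_SA_ODE_t_n} and \ref{lem:Bound_SA_ODE_t_n+1}. Fix $t\in[t_n,t_{n+1}]$, put $\theta:=(t-t_n)/a_n\in[0,1]$, and abbreviate $x(\cdot)\equiv x(\cdot,t_{n_0},\bar{x}(t_{n_0}))$. By \eqref{eqn:LinearInterpolation}, $\bar{x}(t)=(1-\theta)\bar{x}(t_n)+\theta\,\bar{x}(t_{n+1})$, and since $(1-\theta)+\theta=1$ I would write $\bar{x}(t)-x(t)=(1-\theta)[\bar{x}(t_n)-x(t)]+\theta[\bar{x}(t_{n+1})-x(t)]$ and split each bracket as $\bar{x}(t_n)-x(t)=[\bar{x}(t_n)-x(t_n)]+[x(t_n)-x(t)]$, and similarly for the $t_{n+1}$ term.

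Next I would control the two ODE-displacement terms $\|x(t_n)-x(t)\|$ and $\|x(t_{n+1})-x(t)\|$. On $G_n$ we have $\bar{x}(t_{n_0})\in V^r$, so because $V$ is a Liapunov function the entire trajectory $x(s)$, $s\ge t_{n_0}$, remains in $V^r$; hence $\|h(x(s))\|=\|h(x(s))-h(x^*)\|\le L_h R$ there, with $L_h$ the Lipschitz constant of $h$ on $V^r$ and $R$ as in \eqref{eqn:Defn_R}. Integrating $\dot{x}=h(x)$ over the relevant subintervals of $[t_n,t_{n+1}]$ gives $\|x(t_n)-x(t)\|\le L_h R\,a_n$ and $\|x(t_{n+1})-x(t)\|\le L_h R\,a_n$. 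Feeding these into the convex-combination identity and using the triangle inequality, I obtain, uniformly in $t\in[t_n,t_{n+1}]$,
\[
\|\bar{x}(t)-x(t)\|\le \max\{\|\bar{x}(t_n)-x(t_n)\|,\ \|\bar{x}(t_{n+1})-x(t_{n+1})\|\}+L_h R\,a_n.
\]

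Finally I would take the supremum over $t\in[t_n,t_{n+1}]$ to bound $\rho_{n+1}$ by the right-hand side, apply Lemma~\ref{lem:Bound_SA_ODE_t_n} to the first endpoint deviation and Lemma~\ref{lem:Bound_SA_ODE_t_n+1} to the second (the latter already being in the target form with $\sup_{n_0\le k\le n}$), and absorb the residual $L_h R\,a_n\le L_h R\sup_{n_0\le k\le n}a_k$ into the constant, yielding the claimed inequality with, say, $K_{10}:=K_9+L_hR$. I do not expect a real obstacle here; the only points needing care are arranging the decomposition so that only endpoint deviations and \emph{short-time} ODE displacements appear, and remembering that $G_n$ forces the limiting-ODE solution $x(\cdot)$ itself (not merely the SA iterates) to stay in $V^r$, which is exactly what validates the $L_h R\,a_n$ displacement estimate.
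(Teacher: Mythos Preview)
Your proposal is correct and follows essentially the same approach as the paper: exploit the convex-combination structure of $\bar{x}(t)$ on $[t_n,t_{n+1}]$ to reduce to the two endpoint deviations, control the short-time ODE displacements via $\|h(x(s))\|\le L_hR$ on $V^r$, and then invoke Lemmas~\ref{lem:Bound_SA_ODE_t_n} and \ref{lem:Bound_SA_ODE_t_n+1}. The only cosmetic differences are that the paper keeps the convex weights $(1-\pi),\pi$ rather than passing to the $\max$, and bounds the displacement terms by the single integral $\int_{t_n}^{t_{n+1}}\|h(x(s))\|\,ds\le L_hR\,a_n$ rather than bounding the two pieces $\|x(t_n)-x(t)\|$ and $\|x(t_{n+1})-x(t)\|$ separately; these yield the same estimate.
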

\begin{proof}
Fix $t \in [t_{n}, t_{n + 1}].$ Then there exists some $\pi \in [0,1]$ such that
\[
\bar{x}(t) = (1 - \pi) \bar{x}(t_n) + \pi \bar{x}(t_{n + 1}).
\]
Hence
\begin{multline*}
\|\bar{x}(t) - x(t, t_{n_0}, \bar{x}(t_{n_0}))\| \leq (1 - \pi) \|\bar{x}(t_n) - x(t, t_{n_0}, \bar{x}(t_{n_0})) \| \\
+ \; \pi \|\bar{x}(t_{n + 1}) - x(t, t_{n_0}, \bar{x}(t_{n_0}))\|.
\end{multline*}
Since
\[
x(t, t_{n_0}, \bar{x}(t_{n_0})) = x(t_n, t_{n_0}, \bar{x}(t_{n_0})) + \int_{t_{n}}^{t} h(x(s, t_{n_0}, \bar{x}(t_{n_0}))) ds
\]
and
\[
x(t_{n + 1}, t_{n_0}, \bar{x}(t_{n_0})) = x(t, t_{n_0}, \bar{x}(t_{n_0})) + \int_{t}^{t_{n + 1}} h(x(s, t_{n_0}, \bar{x}(t_{n_0}))) ds,
\]
we have
\begin{multline}
\label{eqn:Int_Bound_rho_n+1}
\|\bar{x}(t) - x(t, t_{n_0}, \bar{x}(t_{n_0}))\| \leq (1 - \pi)\|\bar{x}(t_n) - x(t_n, t_{n_0}, \bar{x}(t_{n_0}))\| \\
+ \; \pi \|\bar{x}(t_{n + 1}) - x(t_{n + 1}, t_{n_0}, \bar{x}(t_{n_0})) \| +   \int_{t_n}^{t_{n + 1}} \|h(x(s, t_{n_0}, \bar{x}(t_{n_0})))\|ds.
\end{multline}
But $h(x^*) = 0$ ensures
\[
\int_{t_n}^{t_{n + 1}} \|h(x(s, t_{n_0}, \bar{x}(t_{n_0})))\| ds = \int_{t_n}^{t_{n + 1}} \|h(x(s, t_{n_0}, \bar{x}(t_{n_0}))) - h(x^*)\| ds.
\]
Hence arguing as in the proof of Lemma~\ref{lem:Bound_SA_ODE_t_n+1}, it follows that on $G_n,$
\[
\int_{t_n}^{t_{n + 1}} \| h(x(s, t_{n_0}, \bar{x}(t_{n_0}))) \| ds \leq L_h R a_n.
\]
Substituting  this in \eqref{eqn:Int_Bound_rho_n+1} and making use of Lemmas~\ref{lem:Bound_SA_ODE_t_n} and \ref{lem:Bound_SA_ODE_t_n+1}, the desired result is easy to see.
\end{proof}

\begin{lemma}
\label{lem:Bound_rho_n+1^*}
Let $n \geq n_0$ be arbitrary. Then on $G_n,$
\[
\rho_{n + 1}^* \leq K_{11}\left[\|S_n\| + \sup_{n_0 \leq k \leq n}a_k  + \sup_{n_0 \leq k \leq n}a_k \|M_{k + 1}\|^2 + e^{-\lambda (t_{n} - t_{n_0})}\right],
\]
where $K_{11} \geq 0$ is some constant.
\end{lemma}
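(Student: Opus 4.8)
The plan is to split $\|\bar{x}(t) - x^*\|$ via the triangle inequality through the ODE trajectory that starts at $\bar{x}(t_{n_0})$, namely
\[
\|\bar{x}(t) - x^*\| \leq \|\bar{x}(t) - x(t, t_{n_0}, \bar{x}(t_{n_0}))\| + \|x(t, t_{n_0}, \bar{x}(t_{n_0})) - x^*\|.
\]
After taking $\sup_{t \in [t_n, t_{n+1}]}$, the first term on the right is exactly $\rho_{n+1}$, for which Lemma~\ref{lem:Bound_rho_n+1} already supplies the bound $K_{10}\big[\|S_n\| + \sup_{n_0 \leq k \leq n} a_k + \sup_{n_0 \leq k \leq n} a_k \|M_{k+1}\|^2\big]$ on $G_n$. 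So the only genuinely new work is to control the second term.

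For that term I would use that $x^*$ is an equilibrium of \eqref{eqn:LimitingODE}, so $x(t, t_{n_0}, x^*) \equiv x^*$, and apply Lemma~\ref{lem:CloseSolutionsBound} with $u_0 = \bar{x}(t_{n_0})$ and $u_1 = x^*$. This requires both points to lie in $V^r$: on $G_n \subseteq G_{n_0}$ we have $\bar{x}(t_{n_0}) \in V^r$ by definition of $G_{n_0}$, and $x^* \in V^r$ since $V(x^*) = 0 \leq r$. Lemma~\ref{lem:CloseSolutionsBound} then gives, for $t \geq t_{n_0}$,
\[
\|x(t, t_{n_0}, \bar{x}(t_{n_0})) - x^*\| \leq K_1 \|\bar{x}(t_{n_0}) - x^*\| e^{-\lambda(t - t_{n_0})} \leq K_1 R\, e^{-\lambda(t - t_{n_0})},
\]
where \eqref{eqn:Defn_R} is used in the last step. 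For $t \in [t_n, t_{n+1}]$ one has $t \geq t_n$, hence $e^{-\lambda(t - t_{n_0})} \leq e^{-\lambda(t_n - t_{n_0})}$, so this term is at most $K_1 R\, e^{-\lambda(t_n - t_{n_0})}$ uniformly on the whole interval.

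Combining the two pieces, on $G_n$,
\[
\rho_{n+1}^* \leq \rho_{n+1} + K_1 R\, e^{-\lambda(t_n - t_{n_0})} \leq K_{10}\Big[\|S_n\| + \sup_{n_0 \leq k \leq n} a_k + \sup_{n_0 \leq k \leq n} a_k \|M_{k+1}\|^2\Big] + K_1 R\, e^{-\lambda(t_n - t_{n_0})},
\]
and taking $K_{11} := \max\{K_{10}, K_1 R\}$ yields the claimed inequality. I do not expect a real obstacle here: the proof is a routine combination of Lemmas~\ref{lem:CloseSolutionsBound} and \ref{lem:Bound_rho_n+1}. The only points needing a moment's care are (i) verifying the hypotheses of Lemma~\ref{lem:CloseSolutionsBound}, namely that both endpoints lie in $V^r$ — this is precisely where conditioning on $G_n$ (hence on $G_{n_0}$) is used — and (ii) the monotonicity of $t \mapsto e^{-\lambda(t - t_{n_0})}$, which lets us pass from the pointwise estimate to the supremum over $[t_n, t_{n+1}]$ while retaining the clean form $e^{-\lambda(t_n - t_{n_0})}$.
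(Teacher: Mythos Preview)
Your proof is correct and follows essentially the same approach as the paper: split via the triangle inequality through the ODE trajectory, bound the first term by $\rho_{n+1}$ using Lemma~\ref{lem:Bound_rho_n+1}, and bound the second term with Lemma~\ref{lem:CloseSolutionsBound} and \eqref{eqn:Defn_R}. You are in fact slightly more careful than the paper in explicitly checking the hypotheses of Lemma~\ref{lem:CloseSolutionsBound} and in spelling out the monotonicity step that replaces $\sup_{t \in [t_n, t_{n+1}]} e^{-\lambda(t - t_{n_0})}$ by $e^{-\lambda(t_n - t_{n_0})}$.
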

\begin{proof}
For $t \in [t_n, t_{n + 1}],$
\[
\|\bar{x}(t) - x^*\| \leq \|\bar{x}(t) - x(t, t_{n_0}, \bar{x}(t_{n_0}))\| + \|x(t, t_{n_0}, \bar{x}(t_{n_0})) - x^*\|.
\]
From Lemma~\ref{lem:CloseSolutionsBound}, it follow that, on $G_n,$
\[
\|x(t, t_{n_0}, \bar{x}(t_{n_0})) - x^*\| \leq K_1 \| \bar{x}(t_{n_0}) - x^*\| e^{-\lambda (t- t_{n_0})} \leq K_1 R e^{-\lambda (t - t_{n_0})}.
\]
Hence
\[
\rho_{n + 1}^* \leq \rho_{n + 1} + K_1 R \sup_{t \in [t_n, t_{n + 1}]} e^{-\lambda (t - t_{n_0})}.
\]
Using Lemma~\ref{lem:Bound_rho_n+1}, the desired result is easy to see.
\end{proof}

Let $K := \max\{K_{10}, K_{11}\}.$ The following result is then straightforward.
\begin{theorem}
\label{thm:Bound_rho_n+1_rho_n+1^*}
Let $n \geq n_0$ be arbitrary. Then on $G_n,$
\[
\rho_{n + 1} \leq K\left[\|S_n\| + \sup_{n_0 \leq k \leq n}a_k  + \sup_{n_0 \leq k \leq n}a_k \|M_{k + 1}\|^2 \right],
\]
and
\[
\rho_{n + 1}^* \leq K\left[\|S_n\| + \sup_{n_0 \leq k \leq n}a_k  + \sup_{n_0 \leq k \leq n}a_k \|M_{k + 1}\|^2 + e^{-\lambda (t_{n} - t_{n_0})}\right],
\]
where $K \geq 0$ is some constant.
\end{theorem}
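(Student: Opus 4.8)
The plan is to obtain this statement directly from the two preceding lemmas, Lemma~\ref{lem:Bound_rho_n+1} and Lemma~\ref{lem:Bound_rho_n+1^*}, with essentially no extra work. First I would recall that, on the event $G_n$, those lemmas already give
\[
\rho_{n+1} \leq K_{10}\Big[\|S_n\| + \sup_{n_0 \leq k \leq n} a_k + \sup_{n_0 \leq k \leq n} a_k \|M_{k+1}\|^2\Big]
\]
and
\[
\rho_{n+1}^* \leq K_{11}\Big[\|S_n\| + \sup_{n_0 \leq k \leq n} a_k + \sup_{n_0 \leq k \leq n} a_k \|M_{k+1}\|^2 + e^{-\lambda(t_n - t_{n_0})}\Big].
\]
The only thing to check is that every term inside the square brackets is non-negative: $\|S_n\|$ and $\|M_{k+1}\|$ are norms, the $a_k$ are strictly positive by $\pmb{A}_2$, and $e^{-\lambda(t_n - t_{n_0})} > 0$. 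Hence enlarging the multiplicative constant from $K_{10}$ (resp.\ $K_{11}$) to $K := \max\{K_{10}, K_{11}\}$ only weakens each inequality, and both bounds then hold with this common constant $K$, which is exactly the claimed statement.

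There is really no obstacle here; the substance of the theorem is entirely carried by Lemmas~\ref{lem:Bound_rho_n+1} and \ref{lem:Bound_rho_n+1^*}, which in turn rest on the Alekseev decomposition of Theorem~\ref{thm:AlternativeExp_xnViaAlekseev} together with the exponential estimates of Lemmas~\ref{lem:CloseSolutionsBound}--\ref{lem:BoundDiff_Sn_tildeS_n}. The point of isolating it as a separate theorem is purely bookkeeping: it lets the argument in Section~\ref{sec:MainProof} work with a single constant $K$ when it bounds the second term on the right-hand side of \eqref{eqn:IntConcBound}, reducing everything to controlling $\|S_n\|$ via the martingale concentration inequality of Theorem~\ref{thm:ConcMartingalesMultivariate} and the stepsize-weighted noise terms via the tail bound \eqref{eqn:NoiseAssumption2}.
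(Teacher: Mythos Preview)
Your proposal is correct and matches the paper's own argument exactly: the paper simply defines $K := \max\{K_{10}, K_{11}\}$ immediately before stating the theorem and declares the result ``straightforward,'' which is precisely the observation you spell out.
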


\section{Proof of Theorem~\ref{thm:MainResult}}
\label{sec:MainProof}

Our first result here gives an upper bound for the probability expression on RHS of \eqref{eqn:IntConcBound} in terms of $\{\|S_n\|\}$ and $\{a_{n} \|M_{n + 1}\|^2\}.$

\begin{theorem}
\label{thm:ConcBoundInSn_Mn+1}
Let $\bar{x}(t)$ be as in \eqref{eqn:LinearInterpolation}, $K$ be as defined in Theorem~\ref{thm:Bound_rho_n+1_rho_n+1^*}, $n_1$ be as in \eqref{eqn:TReln1}, and $\epsilon$ be as in Theorem~\ref{thm:MainResult}. Let $N$ be such that $a_{n} \leq \epsilon/(4K)$ for all $n \geq N,$ and $T$ be such that $e^{-\lambda T} \leq \epsilon/(4K).$ Then for any $n_0 \geq N,$
\begin{multline}
\label{eqn:TwoTermSplit}
\Pr\left\{\bigcup_{n = n_0}^{n_1} \{G_n,  \rho_{n + 1} > \epsilon\} \cup \bigcup_{n = n_1 + 1}^{\infty} \{G_n, \rho_{n + 1}^* > \epsilon\} \bigg| \bar{x}(t_{n_0}) \in B\right\} \leq \\
\sum_{n = n_0}^{\infty} \Pr\left\{G_n, \|S_n\| > \frac{\epsilon}{4K} \bigg| \bar{x}(t_{n_0}) \in B\right\} \\
+ \; \sum_{n = n_0}^{\infty} \Pr\left\{G_n, a_n\|M_{n + 1}\|^2 > \frac{\epsilon}{4K} \bigg| \bar{x}(t_{n_0}) \in B\right\}.
\end{multline}
\end{theorem}
\begin{proof}
From \eqref{eqn:TReln1}, it follows that $t_{n} \geq t_{n_0} + T$ for each $n \geq n_1 + 1.$ Since $e^{-\lambda T} \leq \epsilon/(4K),$ it follows that for each $n \geq n_1 + 1,$ $e^{-\lambda(t_n - t_{n_0})} \leq \epsilon/(4K).$ Combining this with the fact that $n_0 \geq N,$ it follows from Theorem~\ref{thm:Bound_rho_n+1_rho_n+1^*} that, for $n_0 \leq n \leq n_1,$
\begin{multline*}
\left\{G_n, \rho_{n + 1} > \epsilon \right\} \subseteq \left\{G_n, \|S_n\| > \frac{\epsilon}{4K}\right\} \\ \cup \left\{G_n,  \sup_{n_0 \leq k \leq n} a_{k} \|M_{k + 1}\|^2 >  \frac{\epsilon}{4K}\right\},
\end{multline*}
and, for $n \geq n_1 + 1,$
\begin{multline*}
\{G_n, \rho_{n + 1}^*  > \epsilon \} \subseteq \left\{G_n, \|S_n\| > \frac{\epsilon}{4K} \right\} \\ \cup \left\{G_n, \sup_{n_0 \leq k \leq n} a_{k} \|M_{k + 1}\|^2 > \frac{\epsilon}{4K}\right\}.
\end{multline*}
For $n_0 \leq k \leq n,$ note that $G_n \subseteq G_k$ and hence
\[
\left\{G_n, a_k \|M_{k + 1}\|^2 > \frac{\epsilon}{4K} \right\} \subseteq  \left\{G_k, a_k \|M_{k + 1}\|^2 > \frac{\epsilon}{4K} \right\}.
\]
Thus for $n \geq n_0,$
\[
\left\{G_n, \sup_{n_0 \leq k \leq n} a_{k} \|M_{k + 1}\|^2 > \frac{\epsilon}{4K}\right\} \subseteq \bigcup_{k = n_0}^{n} \left\{G_k, a_k \|M_{k + 1}\|^2 > \frac{\epsilon}{4K}\right\}.
\]
Putting the above relations together, we have
\begin{multline*}
\bigcup_{n = n_0}^{n_1} \{G_n,  \rho_{n + 1} > \epsilon\} \cup \bigcup_{n = n_1 + 1}^{\infty} \{G_n, \rho_{n + 1}^* > \epsilon\} \subseteq \\
\bigcup_{n = n_0}^{\infty} \left\{G_n, \|S_n\| > \frac{\epsilon}{4K} \right\} \cup \bigcup_{n =  n_0}^{\infty} \left\{G_n, a_n \|M_{n + 1}\|^2 > \frac{\epsilon}{4K} \right\}.
\end{multline*}
The desired result is now easy to see.
\end{proof}

We now sequentially      derive bounds for the two expressions on RHS of \eqref{eqn:TwoTermSplit}. Let $K_{12} := \sup_{x \in V^r} c_1(x)$ and $K_{13} := \inf_{x \in V^r} c_2(x)/(2\sqrt{K}).$ Since $V^r$ is a compact set, it follows that $K_{12}, K_{13} \in (0, \infty).$

\begin{theorem}
\label{thm:Bound_an_Mn+1}
Let $\bar{x}(t)$ be as in \eqref{eqn:LinearInterpolation}, $K$ be as in Theorem~\ref{thm:Bound_rho_n+1_rho_n+1^*}, $\epsilon$ be as in Theorem~\ref{thm:MainResult}, and $N$ be as in Theorem~\ref{thm:ConcBoundInSn_Mn+1}. Then for $n_0 \geq N,$
\begin{multline*}
\sum_{n = n_0}^{\infty} \Pr\left\{G_n, a_n\|M_{n + 1}\|^2 > \frac{\epsilon}{4K} \bigg| \bar{x}(t_{n_0}) \in B\right\} \leq \\
K_{12}  \sum_{n = n_0}^{\infty} \exp \left(- \frac{K_{13}\sqrt{\epsilon}}{\sqrt{a_n}}\right).
\end{multline*}
\end{theorem}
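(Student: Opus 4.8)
Write $D := \{\bar{x}(t_{n_0}) \in B\}$ and assume $\Pr(D) > 0$ (otherwise the conditional probabilities are vacuous). Since $a_n > 0$, the event in question rewrites as
\[
\Big\{a_n\|M_{n+1}\|^2 > \tfrac{\epsilon}{4K}\Big\} = \{\|M_{n+1}\| > u_n\}, \qquad u_n := \frac{\sqrt{\epsilon}}{2\sqrt{K}\,\sqrt{a_n}}.
\]
The first step would be a measurability observation: $D = \{x_{n_0} \in B\} \in \mathcal{F}_{n_0} \subseteq \mathcal{F}_n$, and by \eqref{eqn:LinearInterpolation} the event $G_n$ is determined by $x_{n_0},\dots,x_n$ (for $t\in[t_k,t_{k+1}]$, $\bar{x}(t)$ lies on the segment $[x_k,x_{k+1}]$, and $V^r$ is closed), each $x_k$ being $\mathcal{F}_n$-measurable by \eqref{eqn:SAIterates}; hence $G_n \cap D \in \mathcal{F}_n$. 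Conditioning on $\mathcal{F}_n$ and using the tower property then gives
\[
\Pr\{G_n,\ \|M_{n+1}\| > u_n \mid D\} = \frac{1}{\Pr(D)}\,\mathbb{E}\Big[\mathbf{1}_{G_n \cap D}\ \Pr\{\|M_{n+1}\| > u_n \mid \mathcal{F}_n\}\Big].
\]

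Next I would exploit compactness. On $G_n$ we have $x_n = \bar{x}(t_n) \in V^r$, and $V^r$ is compact (closed by continuity of $V$, and bounded, cf. \eqref{eqn:Defn_R}). Since $c_1,c_2$ are continuous and strictly positive, the constants $\bar{c}_1 := \max_{x \in V^r} c_1(x) \in (0,\infty)$ and $\underline{c}_2 := \min_{x \in V^r} c_2(x) \in (0,\infty)$ are well defined and depend only on $r$. Then, for every $n$ with $u_n \ge \ul$, assumption $\pmb{A}_3$, namely \eqref{eqn:NoiseAssumption2}, applied on $G_n$ yields
\[
\mathbf{1}_{G_n}\,\Pr\{\|M_{n+1}\| > u_n \mid \mathcal{F}_n\} \le \mathbf{1}_{G_n}\,c_1(x_n)\,e^{-c_2(x_n)u_n} \le \bar{c}_1\,\exp\!\Big(-\frac{\underline{c}_2}{2\sqrt{K}}\,\frac{\sqrt{\epsilon}}{\sqrt{a_n}}\Big),
\]
so, plugging this into the conditional identity above, $\Pr\{G_n,\ \|M_{n+1}\| > u_n \mid D\} \le \bar{c}_1\exp\big(-\tfrac{\underline{c}_2}{2\sqrt{K}}\sqrt{\epsilon}/\sqrt{a_n}\big)$.

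It remains to handle the indices $n \ge n_0$ with $u_n < \ul$, i.e.\ $a_n > \epsilon/(4K\ul^2)$, where \eqref{eqn:NoiseAssumption2} is not available; since $a_n \to 0$ there are only finitely many of them. For such $n$ I would bound the probability trivially by $1$, and note that $u_n < \ul$ forces $\tfrac{\underline{c}_2}{2\sqrt{K}}\sqrt{\epsilon}/\sqrt{a_n} = \underline{c}_2 u_n < \underline{c}_2\ul$, whence $1 \le e^{\underline{c}_2\ul}\exp\big(-\tfrac{\underline{c}_2}{2\sqrt{K}}\sqrt{\epsilon}/\sqrt{a_n}\big)$. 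Combining the two regimes, every term of the sum is at most $K_{12}\exp(-K_{13}\sqrt{\epsilon}/\sqrt{a_n})$ with $K_{13} := \underline{c}_2/(2\sqrt{K}) > 0$ and $K_{12} := \max\{\bar{c}_1,\ e^{\underline{c}_2\ul}\}$, and summing over $n \ge n_0$ finishes the proof; the dependence of $K_{12},K_{13}$ on $\lambda,d,r,\ul$ is inherited from $K$ and from $\bar c_1,\underline c_2,\ul$. The only genuinely delicate points are the measurability of $G_n \cap D$ and the passage from the pointwise, $x$-dependent tail bound in $\pmb{A}_3$ to a uniform one via compactness of $V^r$; the regime $u_n < \ul$, where the hypothesis does not apply, is harmless since it is absorbed into the constant $K_{12}$.
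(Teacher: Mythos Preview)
Your proposal is correct and follows essentially the same route as the paper: rewrite the event as $\{\|M_{n+1}\|>u_n\}$, invoke the tail bound \eqref{eqn:NoiseAssumption2} from $\pmb{A}_3$, and replace $c_1(x_n),c_2(x_n)$ by their extrema over the compact set $V^r$ to obtain the constants $K_{12},K_{13}$. The paper does this in two lines via $\Pr\{G_n,A\mid D\}\le \Pr\{A\mid G_n,D\}$ and a direct appeal to \eqref{eqn:NoiseAssumption2}; you instead make the tower-property step explicit, which is a cleaner justification of why the $\mathcal{F}_n$-conditional bound in $\pmb{A}_3$ transfers to the conditioning on $G_n\cap D$. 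Your treatment of the regime $u_n<\ul$ (absorbing it into $K_{12}$) is a point the paper leaves implicit, and your handling is sound; note that the finiteness remark is not actually needed, since your inequality $1\le e^{\underline{c}_2\ul}\exp(-K_{13}\sqrt{\epsilon}/\sqrt{a_n})$ already covers every such term.
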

\begin{proof}
Observe that
\begin{eqnarray*}
& & \Pr\left\{G_n, a_n\|M_{n + 1}\|^2 > \frac{\epsilon}{4K} \bigg| \bar{x}(t_{n_0}) \in B\right\} \\
& \leq & \Pr\left\{ a_n \|M_{n + 1}\|^2 > \frac{\epsilon}{4K}  \bigg| G_n, \bar{x}(t_{n_0}) \in B\right\}\\
& = & \Pr\left\{\|M_{n + 1}\| > \frac{\sqrt{\epsilon}}{2 \sqrt{K}\sqrt{a_n}}  \bigg| G_n, \bar{x}(t_{n_0}) \in B\right\}\\
& \leq & K_{12} \exp\left(- \frac{K_{13} \sqrt{\epsilon}}{\sqrt{a_n}} \right),
\end{eqnarray*}
where the last inequality follows due to \eqref{eqn:NoiseAssumption2} and the fact that $\bar{x}(t_n) \in V^r$ on the event $G_n.$ This proves the desired result.
\end{proof}

\begin{theorem}
\label{thm:BoundSn}
Let $\bar{x}(t)$ be as in \eqref{eqn:LinearInterpolation}, $K$ be as in Theorem~\ref{thm:Bound_rho_n+1_rho_n+1^*}, $\epsilon$ and $\beta_n$ be as in Theorem~\ref{thm:MainResult}, $N$ be as in Theorem~\ref{thm:ConcBoundInSn_Mn+1}, and $S_n$ be as in \eqref{eqn:Defn_Sn}. Then for some constants $K_{14} \geq 0$ and $K_{15} > 0,$ the following relation holds:
\[
\sum_{n = n_0}^{\infty} \Pr\left\{G_n, \|S_n\| > \frac{\epsilon}{4K} \bigg| \bar{x}(t_{n_0}) \in B\right\} \leq K_{14}  \sum_{n = n_0}^{\infty} \exp \left(- \frac{K_{15} \min\{\epsilon, \epsilon^2\}}{\beta_n}\right).
\]
\end{theorem}

\begin{proof}
Let
\begin{equation}
\label{eqn:Defn_Alpha_k+1n}
\alpha_{k + 1, n} := \int_{t_k}^{t_{k + 1}} \Phi(t_n, s, \bar{x}(t_k)) ds.
\end{equation}
Then $S_n = \sum_{k = n_0}^{n - 1} \alpha_{k + 1, n}  M_{k + 1}.$ Since $G_{n_0} \supseteq \cdots \supseteq G_{n-1} \supseteq G_{n},$ we have
\begin{eqnarray*}
& & \Pr \left\{G_n, \|S_n\| > \frac{\epsilon}{4 K} \bigg| \bar{x}(t_{n_0}) \in B\right\} \\
& \leq & \Pr \left\{G_{n - 1},  \|S_n\| > \frac{\epsilon}{4 K} \bigg| \bar{x}(t_{n_0}) \in B\right\} \\
& = & \Pr \left\{ \|S_n\| > \frac{\epsilon}{4 K} \bigg|G_{n - 1}, \bar{x}(t_{n_0}) \in B\right\}\Pr \left\{G_{n - 1} | \bar{x}(t_{n_0}) \in B\right\}\\
& = & \Pr \left\{ \bigg\|\sum_{k = n_0}^{n - 1} \alpha_{k + 1, n} M_{k + 1} 1_{G_{k}}\bigg\| > \frac{\epsilon}{4 K} \bigg|G_{n - 1}, \bar{x}(t_{n_0}) \in B\right\}\\
& & \times \Pr \left\{G_{n - 1} | \bar{x}(t_{n_0}) \in B\right\}\\
&=& \Pr \left\{ \bigg\| \sum_{k = n_0}^{n - 1} \alpha_{k + 1, n} M_{k + 1} 1_{G_{k}}\bigg\| > \frac{\epsilon}{4 K} \bigg| \bar{x}(t_{n_0}) \in B\right\}.
\end{eqnarray*}
The last but one equality follows since $1_{G_{n_0}} = \cdots = 1_{G_{n - 1}} =  1$ on $G_{n - 1}.$ To prove the desired result, it thus suffices to show that there exist constants $K_{14} \geq 0$ and $K_{15} > 0$ so that the following relation holds:
\begin{multline*}
\Pr \left\{ \bigg\| \sum_{k = n_0}^{n - 1} \alpha_{k + 1, n} M_{k + 1} 1_{G_{k}}\bigg\|  > \frac{\epsilon}{4K} \bigg| \bar{x}(t_{n_0}) \in B\right\}  \leq  \\K_{14} \exp \left(- \frac{K_{15} \min\{\epsilon, \epsilon^2\}}{\beta_n}\right).
\end{multline*}

Since
\[
\mathbb{E}\left[\alpha_{k + 1, n} M_{k + 1} 1_{G_{k}} \bigg| \mathcal{F}_k\right] = 0, \; k \geq n_0,
\]
where $\mathcal{F}_k$ is as in $\pmb{A_3},$ it follows that
\[
\sum_{k = n_0}^{n - 1} \alpha_{k + 1, n} M_{k + 1} 1_{G_{k}}
\]
is a sum of martingale-differences. Hence the above two relations follow directly from a conditional variant of Theorem~\ref{thm:ConcMartingalesMultivariate} and the discussion in Remark~\ref{rem:AppropriateConcInequality} provided there exist constants $\delta, C, \gamma_1, \gamma_2 > 0$ so that
\begin{equation}
\label{eqn:CondionalExpCond}
\mathbb{E}\left[e^{\delta \|M_{k} 1_{G_{k - 1}}\|} \bigg| \mathcal{F}_{k - 1}\right] \leq C \; a.s.,  \; k \geq n_0 + 1,
\end{equation}
\begin{equation}
\label{eqn:BoundCoeffCond}
\sum_{k = n_0}^{n - 1} \|\alpha_{k + 1, n}\|1_{G_k} \leq \gamma_1,
\end{equation}
and
\begin{equation}
\label{eqn:SupCoeffCond}
\max_{n_0 \leq k \leq n - 1} \|\alpha_{k + 1,n} \| 1_{G_k}\leq \gamma_2 \beta_{n}.
\end{equation}

In the remainder of this proof, we establish \eqref{eqn:CondionalExpCond}, \eqref{eqn:BoundCoeffCond}, and \eqref{eqn:SupCoeffCond}. Pick arbitrary $F_{k - 1} \in \mathcal{F}_{k - 1}.$ Then observe that
\begin{eqnarray}
& & \mathbb{E}[e^{\delta \|M_{k} 1_{G_{k - 1}}\|}1_{F_{k - 1}}] \nonumber \\
& = & \mathbb{E}\left[e^{\delta \|M_{k}\|} \bigg|F_{k - 1} G_{k - 1}\right]  \Pr\{F_{k - 1} G_{k - 1}\} + \Pr\{F_{k - 1} G_{k - 1}^c\}\nonumber \\
& \leq & \left[\int_{0}^{\infty} \Pr\left\{e^{\delta \|M_k\|} > u \bigg|G_{k - 1} F_{k - 1}\right\} du \right] \Pr\{F_{k - 1}\} + \Pr\{F_{k - 1}\}. \label{eqn:CondExpBound}
\end{eqnarray}
But
\begin{eqnarray*}
& & \int_{0}^{\infty} \Pr\left\{e^{\delta \|M_k\|} > u \bigg|G_{k - 1} F_{k - 1}\right\} du\\
& \leq & e^{\delta \ul} + \int_{e^{\delta \ul}}^{\infty} \Pr\left\{e^{\delta \|M_k\|} > u \bigg|G_{k - 1} F_{k - 1}\right\} du\\
& = & e^{\delta \ul} + \int_{e^{\delta \ul}}^{\infty} \Pr\left\{\|M_k\| > \frac{\log u}{\delta} \bigg|G_{k - 1} F_{k - 1}\right\} du,
\end{eqnarray*}
where $\ul$ is as in $\pmb{A_3}.$

Also note that when $u \geq e^{\delta \ul},$ we have $\log u/ \delta \geq \ul.$ Therefore,
\begin{eqnarray*}
& & \Pr\left\{\|M_k\| > \frac{\log u}{\delta} \bigg|G_{k - 1} F_{k - 1}\right\} \\\
& = & \frac{\mathbb{E}\left[I\left[\|M_k\| > \frac{\log u}{\delta}\right] I[ G_{k - 1} F_{k - 1}]\right]}{\Pr\{G_{k - 1} F_{k - 1}\}} \\
& = & \frac{\mathbb{E}\left[\Pr\left\{\|M_k\| > \frac{\log u}{\delta}\big| \mathcal{F}_{k - 1}\right\} I[G_{k - 1} F_{k - 1}]\right]}{\Pr\{G_{k - 1} F_{k - 1}\}}\\
&\leq & K_{12} e^{-K_{13} \log u/\delta},
\end{eqnarray*}
where the last inequality follows from $\pmb{A_3}$ and the fact that $x_{k - 1} \in V^r$ on the event $G_{k - 1}.$

If we pick $\delta = K_{13}/2,$ it follows from the above two inequalities that
\[
\int_{0}^{\infty} \Pr\left\{e^{\delta \|M_k\|} > u \bigg|G_{k - 1} F_{k - 1}\right\} du \leq \exp[K_{13} \ul/2] + \frac{K_{12}}{\exp[K_{13}\ul/2]}.
\]
Substituting this in \eqref{eqn:CondExpBound}, it follows that for
\[
C = \exp[K_{13} \ul/2] + \frac{K_{12}}{\exp[K_{13}\ul/2]} + 1 \text{ and } \delta = K_{13}/2,
\]
\[
\mathbb{E}[e^{\delta \|M_{k} 1_{G_{k - 1}}\|} 1_{F_{k - 1}}] \leq C \Pr\{F_{k - 1}\}.
\]
Since $F_{k - 1} \in \mathcal{F}_{k - 1}$ was arbitrary, we have
\[
\mathbb{E}[e^{\delta \|M_{k} 1_{G_{k - 1}}\|} | \mathcal{F}_{k - 1}] \leq C \; a.s.
\]
This establishes \eqref{eqn:CondionalExpCond}.

Next note from Lemma~\ref{lem:BoundPhi} that, on $G_k,$
\[
\|\Phi(t_n, s, \bar{x}(t_k))\| \leq K_3 e^{-\lambda (t_n - s)}.
\]
Hence from \eqref{eqn:Defn_Alpha_k+1n}, as in the Proof of Lemma~\ref{lem:BoundW}, it follows that
\[
\sum_{k = n_0}^{n - 1} \|\alpha_{k + 1, n} \| 1_{G_k} \leq K_3\sum_{k = n_0}^{n - 1} e^{-\lambda(t_n - t_{k + 1})} a_k \leq \frac{K_3e^{\lambda}}{\lambda},
\]
and
\[
\max_{n_0 \leq k \leq n - 1} \|\alpha_{k + 1, n}\| 1_{G_k} \leq K_3\beta_n,
\]
as desired in \eqref{eqn:BoundCoeffCond} and \eqref{eqn:SupCoeffCond}. This completes the proof.
\end{proof}

\begin{proof}[Proof of Theorem~\ref{thm:MainResult}]
Let $g_1(\epsilon) = \log[4K/\epsilon] /\lambda$ and $g_2(\epsilon) = 4 K/ \epsilon.$
Then the desired result follows from \eqref{eqn:IntConcBound} and Theorems~\ref{thm:ConcBoundInSn_Mn+1}, \ref{thm:Bound_an_Mn+1}, and \ref{thm:BoundSn}.
\end{proof}

We end this section with a brief comment on how one may estimate the constant $K_1^\prime$ defined in the proof of Lemma~\ref{lem:CloseSolutionsBound}. This is a key constant since all other constants defined throughout Sections~\ref{sec:ErrorBound} and \ref{sec:MainProof} essentially depend on it. First, $\tilde{K}$ and $\lambda^\prime$ defined in \eqref{eqn:BoundNormDhx*} do depend on the prior knowledge of $x^*$ which is usually unavailable. One can, though, use a loose estimate based on the knowledge of $Dh$ in a neighborhood of $x^*,$ if available. Having chosen $\tilde{K}$ and $\lambda^\prime,$ an estimate of $\|P\|$ can then be easily found via \eqref{eqn:PBound}. When the matrix $Dh(x^*)$ is symmetric, one can be a bit more explicit. In that case, $P = -[Dh(x^*)]^{-1}/2$  and $\tilde{K}$ and $\lambda^\prime$ can be chosen as in Footnote~   \ref{fn:splCase}; consequently, $K_1^\prime$ is precisely the square root of the condition number of $Dh(x^*).$ It may be noted that even in absence of explicit constants, our concentration bound does provide useful information as `order' estimates in the spirit of sample complexity in machine learning \cite{vapnik2013nature}.

\section{Discussion}
\label{sec:Discussion}

Here we first look at the issue of obtaining unconditional convergence rates/concentration bounds, as opposed to ours which is conditioned on the iterate being in the domain of attraction of a given equilibrium. An unconditional estimate will be a product of our estimate times the probability that the conditioning event occurs, i.e., the domain of attraction is indeed reached (one might add a qualifier `after a specified time'); e.g., see  Proposition 7.5, \cite{benaim1999dynamics}. As already noted, the latter is strictly positive for any stable equilibrium under reasonable hypotheses; hence, the primary task is to find a good estimate thereof.

The simplest case is when the limiting ODE has a single globally asymptotically stable equilibrium. In a recent work \cite{dalal2018finite1}, we obtained unconditional convergence rates for the special case of TD(0) with linear function approximation; this is a popular algorithm in reinforcement learning. There the limiting ODE is linear and consequently has only one unique equilibrium. The key idea there is to first obtain a high probability bound on how far the TD(0) iterates can go when the stepsizes are initially large. Once the stepsizes become sufficiently small, analysis of the present work is invoked in order to show that the TD(0) iterates closely follow an appropriate solution of the ODE with high probability. Hence, combining ideas from \cite{dalal2018finite1} and this work, we believe that it may be possible to obtain unconditional convergence rates for nonlinear SA methods whose limiting ODE has a unique, global, asymptotically stable equilibrium, as in \cite{frikha2012concentration, fathi2013transport}, but without having to resort to the strong HL or HLS$_\alpha$ type assumptions.

For the case of multiple equilibria/attractors, one has to distinguish between two scenarios. First is the case when the equilibria are unknown and while one of them may be the most desirable, the a priori description of it does not allow us to say anything about its location. This is commonplace in engineering applications; a prime example being the stochastic gradient scheme for minimization which guarantees convergence only to a local minimum whereas the desired goal is the global minimum. One way to ensure the latter is to add extraneous slowly decreasing noise, which leads to the simulated annealing algorithm \cite{gelfand1991recursive}. For a non-gradient scheme a similar ploy may be expected to lead to the minimum of the so called Freidlin-Wentzell potential \cite{freidlin2014random}; to our knowledge this has been worked out so far only in discrete state space \cite{miclo1992fini} and compact Riemannian manifolds \cite{miclo1992compacte}.

The other possible scenario is where there may be some prior information about possible equilibria/attractors and we wish to reach  a most preferred one. This may be the case, e.g., in models arising in economics; in fact this was the original motivation for Arthur to look at lock-in probability. Then the issue is what aspect of the dynamics, given that it is a socioeconomic process and not an algorithm, is in our control. In other words, can we affect the probability to reach the domain of attraction of the desired equilibrium from the given starting point. A natural and commonplace situation is when the initial point is in the domain of attraction of an undesired equilibrium. Then the complement to our probability estimate (i.e., $1 -$ the estimate) is an upper bound on the probability of escape from it. The paths from the initial point to the desired set may traverse several such domains of attraction and the upper bound will then involve all such estimates, for all possible traversal sequences.  This is an interesting direction to pursue in future. A second issue then is to improve this probability \textit{if} we have any control over the dynamics, including the possibility of adding noise as described above. This is a more interesting class of problems with overtones of `stochastic resonance' \cite{herrmann2013stochastic}.

Other interesting directions to pursue are extensions to distributed asynchronous algorithms and more general noise models such as Markov noise.

We end by pointing at some recent papers that build upon the ideas discussed here, thereby illustrating the usefulness of this work. In  \cite{dalal2018finite2} and \cite{borkar2018concentration}, concentration bounds have been obtained for two-timescale SA; the first one deals with the linear case, while the second one handles the generic non-linear setup. Separately, \cite{kumar2018bounds} studies constant stepsize SA used to track a slowly moving target and provides bounds on the tracking error.

\appendix
\section{Concentration Inequality for a sum of Martingale-Differences}
\label{sec:AppConc}

Extending \cite[Theorem 1.1]{liu2009exponential} and building upon its proof technique, we obtain here a novel concentration result for a sum of martingale-differences, first for the univariate case (Theorem~\ref{thm:ConcMartingales}) and then for the multivariate case (Theorem~\ref{thm:ConcMartingalesMultivariate}).

\begin{theorem}
\label{thm:ConcMartingales}
Let $\{X_k\}$ be a real valued $\{\mathcal{F}_k\}-$adapted martingale-difference sequence. Assume that there exist $\delta, C > 0$ such that
\[
\mathbb{E}[e^{\delta |X_k|} | \mathcal{F}_{k - 1}] \leq C \; a.s.
\]
for all $k \geq 1.$ Let $S_n = \sum_{k = 1}^{n} \alpha_{k, n} \, X_k,$ where $\{\alpha_{k, n}\}$ are a.s. bounded previsible real valued random variables. That is, $\alpha_{k, n} \in \mathcal{F}_{k - 1}$ and there is a finite positive deterministic number, say $A_{k , n},$ such that $|\alpha_{k, n}| \leq A_{k, n}$ a.s. Suppose $\sum_{k = 1}^{n} A_{k, n} \leq \gamma_1$ and $\max_{1 \leq k \leq n} A_{k, n} \leq \gamma_2 \beta_n,$ where $\{\beta_n\}$ is some positive sequence and $\gamma_1, \gamma_2 > 0$ are constants that are independent of $n.$ Then there exists some constant $c > 0$ depending on $\delta,$ $C,\gamma_1, \gamma_2$ such that, for $\xi > 0,$
\[
\Pr\{|S_n| > \xi\} \leq
\begin{cases}
2\exp\left(-\frac{c \xi^2}{ \beta_{n}}\right), & \text{ if $\xi \in (0, \frac{C \gamma_1}{\delta}]$}, \\
2\exp\left(-\frac{c \xi}{\beta_{n} }\right), & \text{otherwise.}
\end{cases}
\]
\end{theorem}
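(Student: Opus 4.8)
The plan is to run the classical Cram\'er--Chernoff exponential method adapted to martingales, in the spirit of \cite[Theorem 1.1]{liu2009exponential}; the only genuinely new bookkeeping point is that the coefficients $\alpha_{k,n}$ carry the terminal index $n$, so one fixes $n$ and works with the partial sums $S_{j,n}:=\sum_{k=1}^{j}\alpha_{k,n}X_k$, peeling off one summand at a time. I would first prove the one-sided tail estimate for $\Pr\{S_n>\xi\}$ and then apply the very same estimate to $-S_n$ (which satisfies identical hypotheses since $|-X_k|=|X_k|$ and $\mathbb{E}[X_k\mid\mathcal{F}_{k-1}]=0$), so that a union bound produces the factor $2$ in the conclusion.

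\textbf{Per-term control and aggregation.} From $\mathbb{E}[e^{\delta|X_k|}\mid\mathcal{F}_{k-1}]\le C$ together with $e^{\delta|X_k|}\ge \delta^m|X_k|^m/m!$ one gets the conditional moment bounds $\mathbb{E}[|X_k|^m\mid\mathcal{F}_{k-1}]\le Cm!/\delta^m$ for all $m\ge1$. Since $\alpha_{k,n}\in\mathcal{F}_{k-1}$ and the exponential moment makes the relevant power series absolutely convergent (so sum and conditional expectation may be interchanged $\omega$-wise), expanding the exponential and using $\mathbb{E}[X_k\mid\mathcal{F}_{k-1}]=0$ gives, for any $\theta$ with $|\theta|A_{k,n}\le\delta/2$,
\[
\mathbb{E}\bigl[e^{\theta\alpha_{k,n}X_k}\mid\mathcal{F}_{k-1}\bigr]\le 1+C\sum_{m\ge 2}\Bigl(\tfrac{|\theta\alpha_{k,n}|}{\delta}\Bigr)^{m}\le 1+\tfrac{2C\theta^{2}A_{k,n}^{2}}{\delta^{2}}\le\exp\!\Bigl(\tfrac{2C\theta^{2}A_{k,n}^{2}}{\delta^{2}}\Bigr).
\]
Restricting $\theta$ to $0<\theta\le\theta_{\max}:=\delta/(2\gamma_2\beta_n)$ makes the side condition $|\theta|A_{k,n}\le\delta/2$ hold simultaneously for all $k\le n$, by $\max_k A_{k,n}\le\gamma_2\beta_n$. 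Conditioning successively on $\mathcal{F}_{n-1},\mathcal{F}_{n-2},\dots$ (each factor above is deterministic, hence pulls out of the expectation) yields
\[
\mathbb{E}[e^{\theta S_n}]\le\exp\!\Bigl(\tfrac{2C\theta^{2}}{\delta^{2}}\textstyle\sum_{k=1}^{n}A_{k,n}^{2}\Bigr)\le\exp\!\Bigl(\tfrac{2C\gamma_1\gamma_2}{\delta^{2}}\,\theta^{2}\beta_n\Bigr),
\]
where the last step uses $\sum_k A_{k,n}^{2}\le(\max_k A_{k,n})\sum_k A_{k,n}\le\gamma_1\gamma_2\beta_n$.

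\textbf{Optimization over the two regimes.} By Markov's inequality, $\Pr\{S_n>\xi\}\le\exp\bigl(-\theta\xi+\tfrac{2C\gamma_1\gamma_2}{\delta^{2}}\theta^{2}\beta_n\bigr)$ for every $\theta\in(0,\theta_{\max}]$. The unconstrained minimizer $\theta^{\star}=\tfrac{\delta^{2}\xi}{4C\gamma_1\gamma_2\beta_n}$ lies in $(0,\theta_{\max}]$ exactly when $\xi\le 2C\gamma_1/\delta$; for such $\xi$ one plugs in $\theta=\theta^{\star}$ to obtain a bound of the form $\exp(-c\,\xi^{2}/\beta_n)$, whereas for $\xi>2C\gamma_1/\delta$ the objective is still decreasing at $\theta_{\max}$, so plugging in $\theta=\theta_{\max}$ gives $\exp\!\bigl(-\tfrac{\delta}{2\gamma_2\beta_n}(\xi-\tfrac{C\gamma_1}{\delta})\bigr)\le\exp(-c\,\xi/\beta_n)$. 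On the leftover interval $(C\gamma_1/\delta,\,2C\gamma_1/\delta]$ the quadratic bound already implies a linear one, since there $\xi^{2}\ge(C\gamma_1/\delta)\xi$; taking the minimum of the finitely many numerical constants arising above produces a single $c=c(\delta,C,\gamma_1,\gamma_2)$ and the stated dichotomy at the threshold $C\gamma_1/\delta$, and the passage from $S_n$ to $\pm S_n$ supplies the factor $2$. I do not anticipate a conceptual obstacle; the mildly delicate points are all organizational, namely not confusing $S_{n-1,n}$ with $S_{n-1,n-1}$ when peeling the sum, justifying the termwise conditional-expectation expansion via the exponential moment, and absorbing the harmless factor-$2$ gap between my natural threshold $2C\gamma_1/\delta$ and the stated $C\gamma_1/\delta$ into the constant $c$.
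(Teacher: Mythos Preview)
Your proof is correct and follows essentially the same Cram\'er--Chernoff route as the paper: a per-term MGF bound via the series expansion (using $\mathbb{E}[X_k\mid\mathcal{F}_{k-1}]=0$ to kill the linear term), iterated conditioning, Markov's inequality, and optimization over the exponential parameter. The only cosmetic difference is that the paper keeps the full geometric-series bound $\exp\bigl[CA_{k,n}^{2}\omega^{2}/(1-A_{k,n}\omega)\bigr]$ and then invokes an optimization lemma from \cite{liu2009exponential} to handle the resulting Legendre transform, whereas you restrict $\theta$ to half the admissible range up front to obtain a purely quadratic MGF bound and optimize by elementary calculus; your version is thus a bit more self-contained, at a harmless cost in the numerical constant $c$.
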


We divide the proof into a series of lemmas.

\begin{lemma}
\label{lem:expSum}
Let $\alpha_{k, n}, S_n, \delta, X_k, \mathcal{F}_{k - 1}$ be as in Theorem~\ref{thm:ConcMartingales}. Suppose there exist functions $\{\ell_k : \mathbb{R}_{+} \to \mathbb{R}\}_{1 \leq k\leq n}$ such that for $\omega \geq 0$
\[
\mathbb{E}[\exp\left(\omega \, \alpha_{k, n} \, \delta X_k\right)|\mathcal{F}_{k - 1}] \leq e^{\ell_{k}(\omega)} \; a.s.
\]
Then
\[
\mathbb{E}[e^{\omega \delta S_n}] \leq e^{\sum_{k=1}^{n} \ell_k(\omega)}.
\]
\end{lemma}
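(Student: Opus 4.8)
The plan is to iterate the tower property of conditional expectation, peeling off one factor at a time from the product representation of $e^{\omega\delta S_n}$; the only points requiring care are the measurability bookkeeping and the hypothesis $\omega\geq 0$.

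First I would record that, since $S_n=\sum_{k=1}^{n}\alpha_{k,n}X_k$,
\[
e^{\omega\delta S_n}=\prod_{k=1}^{n}\exp\!\bigl(\omega\,\alpha_{k,n}\,\delta X_k\bigr).
\]
Because the coefficients are previsible, $\alpha_{k,n}\in\mathcal{F}_{k-1}$, and $X_k\in\mathcal{F}_k$, each of the first $n-1$ factors is $\mathcal{F}_{n-1}$-measurable; moreover, since $\omega\geq 0$, every factor is nonnegative. Hence, conditioning on $\mathcal{F}_{n-1}$ and pulling out the $\mathcal{F}_{n-1}$-measurable part,
\[
\mathbb{E}\bigl[e^{\omega\delta S_n}\,\big|\,\mathcal{F}_{n-1}\bigr]
=\Bigl(\prod_{k=1}^{n-1}\exp(\omega\alpha_{k,n}\delta X_k)\Bigr)\,
\mathbb{E}\bigl[\exp(\omega\alpha_{n,n}\delta X_n)\,\big|\,\mathcal{F}_{n-1}\bigr]
\leq e^{\ell_n(\omega)}\prod_{k=1}^{n-1}\exp(\omega\alpha_{k,n}\delta X_k),
\]
where the inequality uses the hypothesis for $k=n$ together with the nonnegativity of the prefactor (which is what lets us multiply the almost-sure conditional inequality through by it). Taking expectations yields
\[
\mathbb{E}[e^{\omega\delta S_n}]\leq e^{\ell_n(\omega)}\,\mathbb{E}\Bigl[\prod_{k=1}^{n-1}\exp(\omega\alpha_{k,n}\delta X_k)\Bigr].
\]

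To finish, I would repeat this step (equivalently, run a downward induction on the number of surviving factors, holding the column index $n$ of $\alpha_{k,n}$ fixed throughout): conditioning successively on $\mathcal{F}_{n-2},\mathcal{F}_{n-3},\dots,\mathcal{F}_0$ peels off the factors $e^{\ell_{n-1}(\omega)},\dots,e^{\ell_1(\omega)}$, the very last step using the martingale-difference property through the hypothesis with $\omega\alpha_{1,n}\delta X_1$ and the fact that $\ell_1(\omega)$ is deterministic. This gives $\mathbb{E}[e^{\omega\delta S_n}]\leq\prod_{k=1}^{n}e^{\ell_k(\omega)}=e^{\sum_{k=1}^{n}\ell_k(\omega)}$, as claimed. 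There is no genuine obstacle here; the step most easily gotten wrong is the bookkeeping, namely keeping the column index $n$ in $\alpha_{k,n}$ fixed while stripping off the row index $k$, so that the intermediate objects are the partial products $\prod_{k\leq m}\exp(\omega\alpha_{k,n}\delta X_k)$ rather than $e^{\omega\delta S_m}$ with coefficients $\alpha_{k,m}$.
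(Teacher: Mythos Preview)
Your argument is correct and is exactly the approach the paper takes: its entire proof is the one-line remark that the result ``follows from iterated conditioning.'' You have simply spelled out that iteration carefully, including the measurability and nonnegativity bookkeeping, so nothing more is needed.
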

\begin{proof}
This  follows from iterated conditioning.
\end{proof}

\begin{lemma}
\label{lem:expBound}
Let $\alpha$ be some bounded real valued random variable with $|\alpha| \leq A$ a.s. Let $X$ be another real valued random variable with $\mathbb{E}[\alpha X] = 0$ and $\mathbb{E}[e^{\delta |X|}] \leq C$ for some $\delta, C > 0.$ Then for all $0 < \omega < 1/A,$
\[
\mathbb{E}[e^{\omega \alpha \delta X}] \leq \exp\left[\frac{C A^2 \omega^2}{1 - A \omega}\right].
\]
\end{lemma}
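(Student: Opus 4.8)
The plan is to expand the exponential in a power series, integrate term by term, and control each moment of $X$ via the sub-exponential hypothesis $\mathbb{E}[e^{\delta|X|}] \le C$. Concretely, I would start from
\[
e^{\omega\alpha\delta X} = \sum_{m=0}^{\infty} \frac{(\omega\alpha\delta X)^m}{m!},
\]
and justify interchanging $\mathbb{E}$ with the sum: since $0 < \omega < 1/A$ and $|\alpha| \le A$ always, we have $\sum_{m\ge 0}\frac{|\omega\alpha\delta X|^m}{m!} = e^{\omega|\alpha|\delta|X|} \le e^{\omega A\delta|X|} \le e^{\delta|X|}$, whose expectation is at most $C < \infty$; so Tonelli/Fubini permits term-by-term integration.

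Next I would handle the low-order terms and then bound the tail. The $m=0$ term contributes $1$, and the $m=1$ term contributes $\omega\delta\,\mathbb{E}[\alpha X] = 0$ by hypothesis. For $m \ge 2$, I would estimate
\[
\bigl|\mathbb{E}[\alpha^m\delta^m X^m]\bigr| \le A^m\,\mathbb{E}\bigl[(\delta|X|)^m\bigr] \le A^m\, m!\, C,
\]
where the last step uses the elementary inequality $(\delta|X|)^m \le m!\, e^{\delta|X|}$, obtained by comparing with the single $m$-th term of the exponential series, together with $\mathbb{E}[e^{\delta|X|}] \le C$.

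Combining these estimates gives
\[
\mathbb{E}\bigl[e^{\omega\alpha\delta X}\bigr] \le 1 + \sum_{m=2}^{\infty} \frac{\omega^m A^m m!\, C}{m!} = 1 + C\sum_{m=2}^{\infty}(A\omega)^m = 1 + \frac{CA^2\omega^2}{1 - A\omega},
\]
where the geometric series converges precisely because $A\omega < 1$. Applying $1 + x \le e^x$ with $x = CA^2\omega^2/(1-A\omega)$ yields the claimed bound. I do not expect any real obstacle here; the only points that need a little care are the justification of the term-by-term integration (handled by the domination $e^{\omega|\alpha|\delta|X|}\le e^{\delta|X|}$ above) and the moment inequality $(\delta|X|)^m \le m!\,e^{\delta|X|}$, both of which are routine.
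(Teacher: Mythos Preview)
Your proof is correct and follows essentially the same route as the paper's own argument: expand $e^{\omega\alpha\delta X}$ in its power series, kill the linear term via $\mathbb{E}[\alpha X]=0$, bound each remaining moment by $(\delta|X|)^m/m!\le e^{\delta|X|}$ to get the geometric tail $C\sum_{m\ge 2}(A\omega)^m$, and finish with $1+x\le e^x$. Your version is slightly more explicit about justifying the interchange of sum and expectation, which the paper leaves implicit.
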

\begin{proof}
Fix arbitrary $\omega$ such that $0 < \omega < 1/A.$ Since $\mathbb{E}[\alpha X] = 0,$
\begin{eqnarray*}
\mathbb{E}[e^{\omega \alpha \delta X}] & = & \sum_{k = 0}^{\infty} (\omega)^k \mathbb{E}\left[\frac{(\delta \alpha X)^k}{k!}\right] = 1 + \sum_{k = 2}^{\infty} (\omega)^k \mathbb{E}\left[\frac{(\delta \alpha X)^k}{k!}\right] \\
& \leq &  1 + \sum_{k = 2}^{\infty} (\omega A)^{k} \mathbb{E}[e^{\delta |X|}] \leq 1 + C \frac{A^2 \omega^2}{1 - A \omega} \leq \exp\left[\frac{C A^2 \omega^2}{1 - A \omega}\right]
\end{eqnarray*}
as desired.
\end{proof}

\begin{proof}[Proof of Theorem~\ref{thm:ConcMartingales}] Because $\alpha_{k, n}$ is previsible, note that
\[
\mathbb{E}[\alpha_{k, n} X_k|\mathcal{F}_{k - 1}] = 0.
\]
Let $0 < \omega < 1/(\gamma_2\beta_n).$ Then from a conditional variant of Lemma~\ref{lem:expBound}, it follows that
\[
\mathbb{E}[\exp\left(\omega \alpha_{k, n} \delta X_k\right)|\mathcal{F}_{k - 1}]  \leq \exp\left[\frac{CA_{k, n}^2 \omega^2}{1 - A_{k, n}\omega}\right] \leq \exp\left[\frac{C \gamma_2 \beta_{n} A_{k, n} \omega^2}{1 - \gamma_2 \beta_{n} \omega}\right] \; a.s.
\]
Hence, by Lemma~\ref{lem:expSum}, we obtain
\[
\mathbb{E}[e^{\omega \delta S_n}] \leq \exp\left[\frac{C\gamma_2 \omega^2 \beta_n \sum_{k = 1}^{n} A_{k, n}}{1 - \gamma_2\beta_{n} \omega}\right] .
\]
But $\sum_{k = 1}^{n} A_{k, n} \leq \gamma_1.$ Hence
\[
\mathbb{E}[e^{\omega \delta S_n}]  \leq \exp\left[\frac{C \gamma_1 \gamma_2 \omega^2 \beta_n}{1 - \gamma_2\beta_{n} \omega}\right].
\]
From this, it follows that
\[
\Pr\{S_{n} > \xi\} \leq \Pr\{e^{\omega\delta S_n} > e^{\omega \delta \xi}\} \leq \exp\left[-\left( \omega \delta \xi - \frac{C \gamma_1 \gamma_2 \omega^2 \beta_{n} }{1 - \gamma_2\beta_{n} \omega}\right)\right].
\]
Since this holds true for each $0 < \omega < 1/(\gamma_2\beta_{n}),$ we have
\[
\Pr\{S_{n} > \xi\} \leq \exp\left[-\sup_{\omega \in \left(0, \frac{1}{\gamma_2\beta_{n}}\right)} \left(\omega \delta \xi - \frac{C \gamma_1 \gamma_2 \omega^2 \beta_{n}}{1 - \gamma_2 \beta_{n} \omega}\right)\right].
\]
Now using \cite[Lemma 2.7]{liu2009exponential}, we get
\[
\Pr\{S_{n} > \xi\} \leq \exp\left[-\frac{\delta}{\gamma_2\beta_{n}} \left(\sqrt{\xi + \frac{C \gamma_1}{\delta}} - \sqrt{\frac{C\gamma_1}{\delta}}\right)^2 \right].
\]
Using the proof of \cite[(2.4)]{liu2009exponential}, it eventually follows that
\[
\Pr\{S_n > \xi\} \leq
\begin{cases}
\exp\left(-\frac{\delta^2 \xi^2}{C \gamma_1 \gamma_2 \beta_{n} (1 + \sqrt{2})^2}\right), & \text{ if $\xi \in (0, \frac{C \gamma_1}{\delta}],$} \\
\exp\left(-\frac{\delta \xi}{\gamma_2\beta_{n} (1 + \sqrt{2})^2}\right), & \text{otherwise.}
\end{cases}
\]
Similarly, one can show that
\[
\Pr\{S_n < -\xi\}  = \Pr\{-S_{n} > \xi\}\leq
\begin{cases}
\exp\left(-\frac{\delta^2 \xi^2}{C \gamma_1 \gamma_2 \beta_{n} (1 + \sqrt{2})^2}\right), & \text{if $\xi \in (0, \frac{C \gamma_1}{\delta}],$} \\
\exp\left(-\frac{\delta \xi}{\gamma_2 \beta_{n} (1 + \sqrt{2})^2}\right), & \text{otherwise.}
\end{cases}
\]
The desired result follows.
\end{proof}

The next result is a multivariate version of Theorem~\ref{thm:ConcMartingales}.
\begin{theorem}
\label{thm:ConcMartingalesMultivariate}
Let $S_n = \sum_{k = 1}^{n} \alpha_{k, n} \, X_k,$ where $\{X_k\}$ is a $\mathbb{R}^d$ valued  $\{\mathcal{F}_k\}-$adapted martingale-difference sequence and $\{\alpha_{k, n}\}$ is a sequence of a.s. bounded previsible  real valued $d \times d$ random matrices. That is, $\alpha_{k, n} \in \mathcal{F}_{k - 1}$ and there exists a finite number, say $A_{k, n},$ such that $\|\alpha_{k, n}\| \leq A_{k, n}$ a.s. Suppose that for some $\delta, C > 0$
\[
\mathbb{E}[e^{\delta \|X_k\|} | \mathcal{F}_{k - 1}] \leq C \; a.s.
\]
for each $k \geq 1.$ Further assume that $\sum_{k = 1}^{n} A_{k, n} \leq \gamma_1$ and $\max_{1 \leq k \leq n} A_{k, n} \leq \gamma_2 \beta_n,$ where $\{\beta_n\}$ is some positive sequence and $\gamma_1, \gamma_2 > 0$ are constants that are independent of $n.$ Then there exists some constant $c > 0$ depending on $\delta,$ $C,$ $\gamma_1, \gamma_2$ such that, for $\xi > 0,$
\begin{equation}
\label{eqn:ConcInequalityMultivariate}
\Pr\{\|S_n\| > \xi\} \leq
\begin{cases}
2d^2\exp\left(-\frac{c \xi^2}{d^3 \beta_{n}}\right), & \text{ if $\xi \in (0, \frac{C \gamma_1 d\sqrt{d}}{\delta}],$} \\
2d^2\exp\left(-\frac{c \xi}{d \sqrt{d} \beta_{n} }\right), & \text{otherwise.}
\end{cases}
\end{equation}
\end{theorem}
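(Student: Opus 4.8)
The plan is to reduce the multivariate statement to the scalar Theorem~\ref{thm:ConcMartingales} by decomposing $S_n$ into its $d$ coordinates and then each coordinate into $d$ scalar martingale sums, one per column of the matrices $\alpha_{k,n}$. Concretely, for $1 \le i, j \le d$ set
\[
T_n^{(i,j)} := \sum_{k = 1}^{n} (\alpha_{k,n})_{ij}\,(X_k)_j,
\]
so that the $i$-th coordinate of $S_n$ is $S_n^{(i)} = \sum_{j = 1}^{d} T_n^{(i,j)}$.

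First I would check that each $T_n^{(i,j)}$ satisfies the hypotheses of Theorem~\ref{thm:ConcMartingales}. The sequence $\{(X_k)_j\}$ is a real-valued $\{\mathcal{F}_k\}$-adapted martingale-difference sequence, since $\mathbb{E}[X_k \mid \mathcal{F}_{k-1}] = 0$ gives $\mathbb{E}[(X_k)_j \mid \mathcal{F}_{k-1}] = 0$. The coefficients $(\alpha_{k,n})_{ij}$ are previsible and satisfy $|(\alpha_{k,n})_{ij}| \le \|\alpha_{k,n}\| \le A_{k,n}$, so the deterministic bounds $\sum_{k=1}^n A_{k,n} \le \gamma_1$ and $\max_{1 \le k \le n} A_{k,n} \le \gamma_2 \beta_n$ carry over verbatim. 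Finally, since $|(X_k)_j| \le \|X_k\|$, the exponential-moment bound is inherited with the same constants: $\mathbb{E}[e^{\delta |(X_k)_j|} \mid \mathcal{F}_{k-1}] \le \mathbb{E}[e^{\delta \|X_k\|} \mid \mathcal{F}_{k-1}] \le C$. Hence Theorem~\ref{thm:ConcMartingales} furnishes a constant $c > 0$, depending only on $\delta, C, \gamma_1, \gamma_2$, such that for every $\eta > 0$ and all $i, j$,
\[
\Pr\{|T_n^{(i,j)}| > \eta\} \le
\begin{cases}
2\exp\!\left(-\dfrac{c\,\eta^2}{\beta_n}\right), & \eta \in (0, C\gamma_1/\delta],\\[2ex]
2\exp\!\left(-\dfrac{c\,\eta}{\beta_n}\right), & \eta > C\gamma_1/\delta.
\end{cases}
\]

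Next comes a pigeonhole plus union-bound step. If $\|S_n\| > \xi$, then $\sum_i (S_n^{(i)})^2 > \xi^2$, so some coordinate obeys $|S_n^{(i)}| > \xi/\sqrt{d}$; since $|S_n^{(i)}| \le \sum_{j=1}^d |T_n^{(i,j)}|$, some term then obeys $|T_n^{(i,j)}| > \xi/(d\sqrt{d})$. Therefore
\[
\{\|S_n\| > \xi\} \subseteq \bigcup_{i,j=1}^{d} \bigl\{|T_n^{(i,j)}| > \xi/(d\sqrt{d})\bigr\},
\]
and a union bound over the $d^2$ events, applying the displayed tail estimate with $\eta = \xi/(d\sqrt{d})$, yields \eqref{eqn:ConcInequalityMultivariate}: the condition $\eta \le C\gamma_1/\delta$ becomes $\xi \le C\gamma_1 d\sqrt{d}/\delta$, while $\eta^2 = \xi^2/d^3$ produces the $d^3$ in the exponent of the first branch and $\eta = \xi/(d\sqrt{d})$ produces the $d\sqrt{d}$ in the second, the prefactor being $2d^2$.

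There is no genuine obstacle here: the analytic content lives entirely in the scalar Theorem~\ref{thm:ConcMartingales}. The only points requiring care are verifying that passing from $\|X_k\|$ to a single coordinate $|(X_k)_j|$ preserves both the martingale-difference property and the exponential-moment bound with \emph{unchanged} constants $\delta, C$, and bookkeeping the powers of $d$ through the splitting so that the thresholds and exponents come out exactly as stated. (A sharper norm comparison would improve the $d$-dependence, but the crude bound already gives the claimed form.)
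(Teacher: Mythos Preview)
Your proposal is correct and follows essentially the same approach as the paper: decompose $S_n$ coordinate-wise, then split each coordinate into $d$ scalar martingale sums $\sum_k (\alpha_{k,n})_{ij}(X_k)_j$, verify the hypotheses of Theorem~\ref{thm:ConcMartingales} are inherited with the same constants, and apply a union bound over the $d^2$ terms with threshold $\xi/(d\sqrt d)$. The bookkeeping of the $d$-powers and the threshold matches the paper exactly.
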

\begin{proof}
Let $\alpha_{k, n}^{ij}$ denote the $(i, j)$-th entry of the matrix $\alpha_{k, n}.$ Similarly, let $X_k^{j}$ denote the $j-$th entry of the vector $X_k.$ Then, it is easy to see that the $i-$th entry of the vector $S_n$ satisfies
\begin{equation}
\label{eqn:Defn_Sn^i}
S_{n}^{i} = \sum_{j = 1}^{d} \left[\sum_{k = 1}^{n} \alpha_{k, n}^{ij} X_k^{j} \right].
\end{equation}
Hence it follows that
\begin{eqnarray}
& & \Pr\{\|S_{n}\| > \xi\} \nonumber\\
& \leq & \sum_{i = 1}^{d} \Pr\left\{|S_{n}^i| > \frac{\xi}{\sqrt{d}}\right\} \nonumber\\
& \leq & \sum_{i = 1}^{d}\sum_{j = 1}^{d} \Pr\left\{\left|\sum_{k = 1}^{n} \alpha_{k, n}^{ij} X_k^j \right| > \frac{\xi}{d \sqrt{d}}\right\}. \label{eqn:ConcIneqIndividualSums}
\end{eqnarray}
Observe that, almost surely,
\[
\mathbb{E}[e^{\delta |X_k^j|}|\mathcal{F}_{k - 1}] \leq \mathbb{E}[e^{\delta \|X_k\|} | \mathcal{F}_{k - 1}] \leq C
\]
and
\[
\sum_{k = 1}^{n}| \alpha_{k, n}^{i, j}| \leq \sum_{k = 1}^{n} \|\alpha_{k, n}\| \leq \sum_{k = 1}^{n} A_{k,n} \leq \gamma_1.
\]
Also, $\max_{1 \leq k \leq n} |\alpha_{k, n}^{i, j} | \leq \max_{1 \leq k \leq n} \|\alpha_{k, n}\| \leq \max_{1 \leq k \leq n} A_{k,n} \leq \gamma_2 \beta_{n}$ a.s. Hence from Theorem~\ref{thm:ConcMartingales}, it follows that there exists some $c > 0$ depending on $C, \delta, \gamma_1, \gamma_2 $ such that
\[
\Pr\left\{\left|\sum_{k = 1}^{n} \alpha_{k, n}^{ij} X_k^j \right|  > \frac{\xi}{d \sqrt{d}}\right\} \leq
\begin{cases}
2\exp\left(-\frac{c \xi^2}{ d^3\beta_{n}}\right), & \text{ if $\xi \in (0, \frac{C \gamma_1 d \sqrt{d}}{\delta}],$} \\
2\exp\left(-\frac{c \xi}{d \sqrt{d} \beta_{n} }\right), & \text{otherwise.}
\end{cases}
\]
Using this in \eqref{eqn:ConcIneqIndividualSums}, the desired result is easy to see.
\end{proof}

\begin{remark}
\label{rem:AppropriateConcInequality}
To aid in the comparison with related literature in Section~\ref{sec:Overview}, we have highlighted in \eqref{eqn:ConcInequalityMultivariate} the dependence on $d.$ However, $d$ is often a constant (as it is assumed in this paper). In this situation, one can rephrase \eqref{eqn:ConcInequalityMultivariate} as
\begin{equation}
\Pr\{\|S_n\| > \xi\} \leq
\begin{cases}
c_1\exp\left(-\frac{c_2 \xi^2}{\beta_{n}}\right), & \text{ if $\xi \in (0, C^\prime],$} \\
c_1\exp\left(-\frac{c_2 \xi}{\beta_{n} }\right), & \text{otherwise}
\end{cases}
\end{equation}
for some suitably chosen constants $c_1, c_2, C^\prime > 0$  depending on $\delta, C, \gamma_1, \gamma_2$ and $d.$ In fact, by choosing $c_1, c_2$ appropriately, the above inequality can be rewritten as
\begin{equation}
\Pr\{\|S_n\| > \xi\} \leq
\begin{cases}
c_1\exp\left(-\frac{c_2 \xi^2}{\beta_{n}}\right) & \text{ if $\xi \in (0, 1]$} \\
c_1\exp\left(-\frac{c_2 \xi}{\beta_{n} }\right) & \text{otherwise}
\end{cases}.
\end{equation}
\end{remark}

\section{Order Estimates for Concentration Bound}
\label{sec:OrderEstimates}

Treating $n_0$ as the only variable, we first obtain here order estimates for the concentration bound given in Theorem~\ref{thm:MainResult} for the family of stepsizes $a_n = 1/(n + 1)^\mu,$ $\mu \in (0, 1].$

\begin{lemma}
\label{lem:anTermBd}
Let $C > 0$ be an arbitrary constant and let $a_n = 1/(n + 1)^{\mu},$ $\mu \in (0, 1].$ Then, as $n_0 \to \infty,$
\[
\sum_{n = n_0}^{\infty} e^{-C/\sqrt{a_n}} = O\left(n_0^{1 - \mu/2} e^{-C n_0 ^{\mu/2}}\right).
\]
\end{lemma}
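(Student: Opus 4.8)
The plan is to turn the sum into a one-dimensional integral and then evaluate that integral by a change of variables that converts it into an upper incomplete Gamma function, whose leading behaviour is easy to bound uniformly in $n_0$. First note that $1/\sqrt{a_n} = (n+1)^{\mu/2}$, so the quantity in question is $\sum_{n=n_0}^\infty e^{-C(n+1)^{\mu/2}}$. The summand $f(x) := e^{-C(x+1)^{\mu/2}}$ is positive and strictly decreasing on $[0,\infty)$, hence the standard integral comparison for monotone sequences gives
\[
\sum_{n=n_0}^\infty f(n) \le f(n_0) + \int_{n_0}^\infty f(x)\,dx .
\]
The boundary term $f(n_0) = e^{-C(n_0+1)^{\mu/2}}$ is already $O\!\left(n_0^{1-\mu/2} e^{-Cn_0^{\mu/2}}\right)$, since $(n_0+1)^{\mu/2} \ge n_0^{\mu/2}$ and $n_0^{1-\mu/2} \ge 1$ because $1-\mu/2 \ge 1/2 > 0$; so all the work is in the integral.

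Next I would substitute $u = (x+1)^{\mu/2}$, i.e. $x+1 = u^{2/\mu}$, $dx = (2/\mu)\,u^{2/\mu-1}\,du$, which yields
\[
\int_{n_0}^\infty f(x)\,dx = \frac{2}{\mu}\int_{a}^{\infty} u^{2/\mu-1} e^{-Cu}\,du, \qquad a := (n_0+1)^{\mu/2},
\]
an (incomplete) Gamma integral. To bound it rigorously and uniformly in $n_0$, rather than invoking an asymptotic, write $u = a+v$ and use the elementary inequality $(a+v)^{p} \le 2^{p}(a^{p}+v^{p})$, valid for $p := 2/\mu - 1 \ge 1$ (here $\mu \le 1$ is used). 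This gives
\[
\int_{a}^{\infty} u^{2/\mu-1} e^{-Cu}\,du = e^{-Ca}\!\int_0^\infty (a+v)^{2/\mu-1} e^{-Cv}\,dv \le e^{-Ca}\,2^{2/\mu-1}\!\left(\frac{a^{2/\mu-1}}{C} + \frac{\Gamma(2/\mu)}{C^{2/\mu}}\right),
\]
and for $n_0$ larger than a threshold depending only on $\mu,C$ the term $a^{2/\mu-1}/C$ dominates the constant, so the right-hand side is at most $K\,a^{2/\mu-1}e^{-Ca}$ for a constant $K=K(\mu,C)$.

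Finally I would substitute back $a = (n_0+1)^{\mu/2}$: since $(\mu/2)(2/\mu-1) = 1-\mu/2$ we get $a^{2/\mu-1} = (n_0+1)^{1-\mu/2} \le 2\,n_0^{1-\mu/2}$ for $n_0 \ge 1$, and $e^{-Ca} = e^{-C(n_0+1)^{\mu/2}} \le e^{-Cn_0^{\mu/2}}$, so $\int_{n_0}^\infty f(x)\,dx = O\!\left(n_0^{1-\mu/2}e^{-Cn_0^{\mu/2}}\right)$; combining with the boundary term proves the lemma. I do not expect a genuine obstacle; the only mildly delicate point is keeping the incomplete-Gamma bound uniform in $n_0$ instead of using an asymptotic expansion, and the $u = a+v$ splitting handles this cleanly — note the endpoint case $\mu = 1$ (where $p = 1$) is covered by the same inequality.
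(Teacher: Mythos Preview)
Your proof is correct, but it takes a somewhat different route from the paper's. Both arguments start by dominating the sum by an integral of $e^{-C s^{\mu/2}}$ over $[n_0,\infty)$; the paper then finishes in one line by applying l'H\^opital's rule to the ratio
\[
\frac{\int_{n_0}^\infty e^{-Cs^{\mu/2}}\,ds}{\,n_0^{1-\mu/2}\,e^{-Cn_0^{\mu/2}}\,}
\]
and observing that the limit is a positive constant. Your approach instead converts the integral to an incomplete Gamma integral via $u=(x+1)^{\mu/2}$ and bounds it directly with the $u=a+v$ splitting and $(a+v)^p\le 2^p(a^p+v^p)$. This is longer, but it is fully elementary (no l'H\^opital) and produces explicit constants depending only on $\mu$ and $C$, whereas the paper's argument gives only an asymptotic $O(\cdot)$ statement. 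Either method is perfectly adequate here.
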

\begin{proof}
Observe that
\[
\sum_{n = n_0}^{\infty} e^{-C/\sqrt{a_n}} \leq \int_{n_0 - 1}^{\infty}  e^{-C(s + 1)^{\mu/2}} ds = \int_{n_0}^{\infty} e^{-Cs^{\mu/2}} ds.
\]
Using l'H\^{o}pital's rule,
\[
\lim_{n_0 \to \infty}\!
\frac{\int_{n_0}^{\infty}  e^{-Cs^{\mu/2}} ds}{n_0^{1 - \mu/2} e^{-C n_0 ^{\mu/2}}} \! = \! \lim_{n_0 \to \infty} \! \frac{\frac{d}{d n_0} \left[\int_{n_0}^{\infty}  e^{-Cs^{\mu/2}} ds\right]}{\frac{d}{d n_0} \left[n_0^{1 - \mu/2} e^{-C n_0 ^{\mu/2}}\right]} = C^\prime,
\]
where $C^\prime \geq 0$ is some constant.  The desired result is now easy to see.
\end{proof}

\begin{lemma}
\label{lem:nStepSize}
Let $C > 0$ be an arbitrary constant, $a_n = 1/(n + 1),$ and $\beta_n$ be as in Theorem~\ref{thm:MainResult}. Then, as $n_0 \to \infty,$
\[
\sum_{n = n_0}^{\infty} e^{-C/\beta_n} =
\begin{cases}
O(e^{-Cn_0}) & \text{ if $\lambda > 1,$}\\
O(e^{-(C/2)n_0}) & \text{ if $\lambda \leq 1,$}
\end{cases}
\]
where $\lambda$ is as defined in \eqref{eqn:Defn_lambda}.
\end{lemma}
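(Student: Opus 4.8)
The plan is to evaluate $\beta_n$ almost explicitly when $a_n=1/(n+1)$, and then to sum an (essentially) geometric series. Writing $H_m=\sum_{j=1}^m \tfrac1j$, note that for this stepsize $\sum_{i=k+1}^{n-1}a_i=\sum_{j=k+2}^{n}\tfrac1j=H_n-H_{k+1}$, so the quantity whose maximum over $n_0\le k\le n-1$ defines $\beta_n$ is
\[
\phi_n(k):=\Bigl[e^{-\lambda\sum_{i=k+1}^{n-1}a_i}\Bigr]a_k=\frac{e^{-\lambda(H_n-H_{k+1})}}{k+1}.
\]
A single-step ratio gives $\phi_n(k+1)/\phi_n(k)=e^{\lambda/(k+2)}\,\frac{k+1}{k+2}$, and I would compare this with $1$ via the elementary bounds $\frac{1}{k+2}<\log\frac{k+2}{k+1}<\frac{1}{k+1}$. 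Hence: if $\lambda>1$, then $\phi_n(k+1)\ge\phi_n(k)$ for all $k\ge \frac{1}{\lambda-1}-1$, so once $n_0$ exceeds this fixed threshold $k\mapsto\phi_n(k)$ is nondecreasing on $[n_0,n-1]$ and $\beta_n=\phi_n(n-1)=a_{n-1}=1/n$ exactly; if instead $\lambda\le1$, then $\phi_n(k+1)<\phi_n(k)$ for every $k\ge n_0$, so for $n\ge n_0+1$ we get $\beta_n=\phi_n(n_0)=e^{-\lambda(H_n-H_{n_0+1})}/(n_0+1)$. (For $n=n_0$ the maximum is over the empty set; as in Theorem~\ref{thm:MainResult} we read this as $\beta_{n_0}=0$, so that term contributes $0$.)

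In the case $\lambda>1$ the lemma is then immediate: $e^{-C/\beta_n}=e^{-Cn}$ for $n>n_0$, so $\sum_{n>n_0}e^{-C/\beta_n}=e^{-C(n_0+1)}/(1-e^{-C})=O(e^{-Cn_0})$.

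For $\lambda\le1$ I would first convert the exact formula for $\beta_n$ into a clean upper bound. Since $x\mapsto 1/x$ is decreasing, $H_n-H_{n_0+1}=\sum_{j=n_0+2}^{n}\tfrac1j\ge\int_{n_0+2}^{n+1}\tfrac{dx}{x}=\log\frac{n+1}{n_0+2}$, so
\[
\beta_n\le\left(\frac{n_0+2}{n+1}\right)^{\!\lambda}\frac{1}{n_0+1}\le\frac{2}{(n+1)^\lambda\,n_0^{\,1-\lambda}},
\]
using $(n_0+2)^\lambda\le 2^\lambda(n_0+1)^\lambda\le2(n_0+1)^\lambda$ and $(n_0+1)^{1-\lambda}\ge n_0^{1-\lambda}$. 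Therefore $e^{-C/\beta_n}\le\exp\!\bigl(-\tfrac C2\,n_0\,((n+1)/n_0)^\lambda\bigr)$. Bounding this decreasing summand by an integral, then substituting $s+1=n_0 v$ and $w=v^\lambda$, I obtain
\[
\sum_{n>n_0}e^{-C/\beta_n}\le\frac{n_0}{\lambda}\int_1^\infty e^{-\frac C2 n_0 w}\,w^{\frac1\lambda-1}\,dw,
\]
and the remaining integral I would estimate by the substitution $w=1+2u/(Cn_0)$: for $n_0$ large it is at most $\frac{2}{Cn_0}e^{-Cn_0/2}\int_0^\infty e^{-u}(1+u)^{1/\lambda-1}\,du$, i.e.\ a constant times $e^{-Cn_0/2}/n_0$. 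Multiplying by $n_0/\lambda$ gives $\sum_{n>n_0}e^{-C/\beta_n}=O(e^{-Cn_0/2})$, as claimed.

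I do not expect a serious obstacle. The one step that needs care is fixing the direction of monotonicity of $\phi_n$, which is exactly what the sandwich $\frac{1}{k+2}<\log\frac{k+2}{k+1}<\frac{1}{k+1}$ supplies; a secondary point is tracking the constant $2$ in the bound on $\beta_n$ for $\lambda\le1$, since it is precisely this factor that degrades the exponent from $C$ to $C/2$ in the statement. Everything else is a routine harmonic-sum-versus-integral comparison.
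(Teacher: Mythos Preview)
Your proposal is correct and follows essentially the same route as the paper: both arguments analyze the monotonicity of $\phi_n(k)$ to identify $\beta_n$ (as $a_{n-1}$ for $\lambda>1$, and as $\phi_n(n_0)$ for $\lambda\le1$), derive the identical bound $\beta_n\le 2/((n+1)^\lambda n_0^{1-\lambda})$ in the second case, and finish with an integral comparison. The only cosmetic difference is in the final integral estimate for $\lambda\le1$: the paper makes the substitution $s\mapsto s\,n_0^{(1-\lambda)/\lambda}$ and invokes l'H\^{o}pital, whereas you carry out two substitutions and bound by a Gamma-type integral; your version is slightly more explicit but otherwise equivalent.
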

\begin{remark}
In Lemma~\ref{lem:nStepSize}, for the case $\lambda \leq 1,$ the below proof can be modified suitably to improve the estimate to $O(e^{-C^\prime n_0})$ for any $C^\prime < C.$
\end{remark}
\begin{proof}[Proof of Lemma~\ref{lem:nStepSize}]
First, consider the case $\lambda > 1.$  We claim here that, for all sufficiently large $n_0,$ and $k, n$ such that $n_0 \leq k \leq n-2,$
\[
\left[e^{-\lambda \sum_{i = k + 1}^{n - 1}a_i} \right] a_k \leq \left[e^{-\lambda \sum_{i = k + 2}^{n - 1}a_i} \right] a_{k + 1}.
\]
To prove this, it suffices to show that, for all sufficiently large $n_0,$ and $k \geq n_0,$
\begin{equation}
\label{eqn:MonotonePropertyStepsize}
e^{-\lambda a_{k + 1}} a_k \leq a_{k + 1}.
\end{equation}
But observe that
\[
\frac{a_{k}}{a_{k + 1}} = \frac{k + 2}{k + 1} \leq e^{1/(k + 1)}.
\]
Also, since $\lambda > 1,$ for all sufficiently large $n_0$ and $k \geq n_0,$ we have
\[
e^{1/(k + 1)} \leq e^{\lambda/(k + 2)}.
\]
Hence \eqref{eqn:MonotonePropertyStepsize} holds and our claim follows. From this, for all sufficiently large $n_0$ and $n \geq n_0,$ $\beta_{n} = a_{n - 1} = 1/n.$ Hence, for all sufficiently large $n_0,$
\[
\sum_{n = n_0}^{\infty}e^{-C/\beta_n}  \leq  \sum_{n = n_0}^{\infty} e^{-Cn} = O(e^{-C n_0})
\]
where the latter follows by treating the sum as a geometric series. The desired result now follows.

Next, consider the case $\lambda \leq 1.$ Clearly,
\[
\frac{a_{k + 1}}{a_k} = \frac{k + 1}{k + 2} \leq e^{-1/(k + 2)} \leq e^{-\lambda a_{k + 1}}.
\]
Hence, by arguing as above, it is easy to see that
\[
\left[e^{-\lambda \sum_{i = k + 1}^{n - 1}a_i} \right] a_k \geq  \left[e^{-\lambda \sum_{i = k + 2}^{n - 1}a_i} \right] a_{k + 1},
\]
for all $n_0,$ and  $k, n$ such that $n_0 \leq k \leq n-2.$ Fix $n_0$ and $n \geq n_0 + 1.$ Then, due to the previous relation,
\[
\beta_{n} = \left[e^{-\lambda \sum_{i = n_0 + 1}^{n - 1} a_i}\right] a_{n_0}.
\]
But
\[
\sum_{i = n_0 + 1}^{n - 1} a_i \geq \int_{n_0 + 1}^{n} \frac{1}{s + 1} ds =  \log\left(\frac{n + 1}{ n_0 + 2}\right).
\]
Using this, for sufficiently large $n_0,$ we have
\[
\beta_{n} \leq \frac{(n_0 + 2)^\lambda}{(n + 1)^\lambda} \frac{1}{n_0 + 1} \leq \frac{2}{(n + 1)^\lambda n_0^{1 - \lambda}}
\]
where the latter follows since $2^\lambda \leq 2.$ Hence, it follows that
\[
\sum_{n = n_0}^{\infty} e^{-C/\beta_n} \leq \sum_{n = n_0}^{\infty} e^{-(C/2) n_0^{1 - \lambda} (n + 1)^\lambda} \leq \int_{n_0}^{\infty} e^{-(C/2) n_0^{1 - \lambda} s^\lambda} ds.
\]
Separately, observe that under the transformation $s n_0^{(1 - \lambda)/\lambda} \to s$
\[
\int_{n_0}^{\infty} e^{-(C/2) n_0^{1 - \lambda} s^\lambda} ds = \frac{1}{n_0^{(1 - \lambda)/\lambda}}\int_{n_0^{1/\lambda}}^{\infty} e^{-(C/2) s^\lambda} ds = O(e^{-(C/2)n_0}),
\]
where the latter follows from l'H\^{o}pital's rule. Substituting this in the previous relation gives the desired result.
\end{proof}

\begin{lemma}
\label{lem:bnTermBd}
Let $C > 0$ be an arbitrary constant, $a_n = 1/(n + 1)^{\mu}$ with $\mu \in (0, 1),$ and $\beta_n$ be as in Theorem~\ref{thm:MainResult}. Then, as $n_0 \to \infty,$
\[
\sum_{n = n_0}^{\infty} \exp\left(- \frac{C}{\beta_n}\right) = O\left(n_0 ^{1 - \mu} e^{-C (n_0 - 1)^\mu} \right).
\]
\end{lemma}
\begin{proof}
As in the proof of the previous lemma, we first show that, for sufficiently large $n_0,$ and all $k \geq n_0,$
\begin{equation}
\label{eqn:stepSizeRel}
\frac{a_k}{a_{k + 1}} \leq e^{\lambda a_{k + 1}}.
\end{equation}
Observe that
\[
\frac{a_{k}}{a_{k + 1}} = \left(\frac{k + 2}{k + 1}\right)^\mu \leq e^{\mu/(k + 1)}.
\]
Since $\mu < 1,$ we have $\mu/(k + 1) \leq \lambda/ (k + 2)^\mu = \lambda a_{k + 1},$ for sufficiently large $n_0,$ and all $k \geq n_0.$ Hence \eqref{eqn:stepSizeRel} holds and therefore
\[
\beta_{n} = a_{n - 1} = \frac{1}{n^\mu}.
\]
Using this and l'H\^{o}pital's rule,
\[
\sum_{n = n_0}^{\infty} e^{-C/\beta_n} = \sum_{n = n_0}^{\infty} e^{-C n^\mu} \leq  O\left(n_0 ^{1 - \mu} e^{-C (n_0 - 1)^\mu} \right).
\]
This proves the desired result.
\end{proof}

\begin{proof}[Proof of Theorem~\ref{thm:OrderEst}]
Observe that the bound in Lemma~\ref{lem:anTermBd} dominates that in Lemmas~\ref{lem:nStepSize} and \ref{lem:bnTermBd}, respectively, for the cases $\mu = 1$ and $\mu \in (0, 1).$ The desired result is now easy to see.
\end{proof}

\section*{Acknowledgements}
The authors would like to thank the Bharti Centre for Communication, IIT Bombay where a portion of this work was completed.

\bibliographystyle{imsart-number}
\bibliography{SCA_Bib}

\begin{thebibliography}{34}

\bibitem{alekseev1961estimate}
\begin{barticle}[author]
\bauthor{\bsnm{Alekseev},~\bfnm{Vladimir~M}\binits{V.~M.}}
(\byear{1961}).
\btitle{An estimate for the perturbations of the solutions of ordinary
  differential equations {(Russian)}}.
\bjournal{Westnik Moskov Unn. Ser}
\bvolume{1}
\bpages{28--36}.
\end{barticle}
\endbibitem

\bibitem{arthur1994increasing}
\begin{bbook}[author]
\bauthor{\bsnm{Arthur},~\bfnm{W~Brian}\binits{W.~B.}}
(\byear{1994}).
\btitle{Increasing returns and path dependence in the economy}.
\bpublisher{University of Michigan Press, Ann Arbor, MI}.
\end{bbook}
\endbibitem

\bibitem{artur1983generalized}
\begin{barticle}[author]
\bauthor{\bsnm{Artur},~\bfnm{B}\binits{B.}},
  \bauthor{\bsnm{Ermol'ev},~\bfnm{Yu~M}\binits{Y.~M.}} \AND
  \bauthor{\bsnm{Kaniovskii},~\bfnm{Yu~M}\binits{Y.~M.}}
(\byear{1983}).
\btitle{A generalized urn problem and its applications}.
\bjournal{Cybernetics and Systems Analysis}
\bvolume{19}
\bpages{61--71}.
\end{barticle}
\endbibitem

\bibitem{bainov1992integral}
\begin{bbook}[author]
\bauthor{\bsnm{Bainov},~\bfnm{Drumi~D}\binits{D.~D.}} \AND
  \bauthor{\bsnm{Simeonov},~\bfnm{Pavel~S}\binits{P.~S.}}
(\byear{1992}).
\btitle{Integral inequalities and applications}.
\bpublisher{Springer Verlag, Berlin-Heidelberg}.
\end{bbook}
\endbibitem

\bibitem{benaim1999dynamics}
\begin{bincollection}[author]
\bauthor{\bsnm{Bena{\"\i}m},~\bfnm{Michel}\binits{M.}}
(\byear{1999}).
\btitle{Dynamics of stochastic approximation algorithms}.
In \bbooktitle{Seminaire de probabilites XXXIII}
\bpages{1--68}.
\bpublisher{Springer}.
\end{bincollection}
\endbibitem

\bibitem{benveniste2012adaptive}
\begin{bbook}[author]
\bauthor{\bsnm{Benveniste},~\bfnm{Albert}\binits{A.}},
  \bauthor{\bsnm{M{\'e}tivier},~\bfnm{Michel}\binits{M.}} \AND
  \bauthor{\bsnm{Priouret},~\bfnm{Pierre}\binits{P.}}
(\byear{1990}).
\btitle{Adaptive algorithms and stochastic approximations}.
\bpublisher{Springer Verlag, Berlin-Heidelberg}.
\end{bbook}
\endbibitem

\bibitem{borkar2008stochastic}
\begin{bbook}[author]
\bauthor{\bsnm{Borkar},~\bfnm{Vivek~S}\binits{V.~S.}}
(\byear{2008}).
\btitle{Stochastic approximation}.
\bpublisher{Hindustan Publishing Agency, New Delhi, and Cambridge University
  Press, Cambridge, UK}.
\end{bbook}
\endbibitem

\bibitem{borkar2018concentration}
\begin{barticle}[author]
\bauthor{\bsnm{Borkar},~\bfnm{Vivek~S}\binits{V.~S.}} \AND
  \bauthor{\bsnm{Pattathil},~\bfnm{Sarath}\binits{S.}}
(\byear{2018}).
\btitle{Concentration bounds for two time scale stochastic approximation}.
\bjournal{arXiv preprint arXiv:1806.10798}.
\end{barticle}
\endbibitem

\bibitem{bovier2016metastability}
\begin{bbook}[author]
\bauthor{\bsnm{Bovier},~\bfnm{Anton}\binits{A.}} \AND
  \bauthor{\bsnm{Den~Hollander},~\bfnm{Frank}\binits{F.}}
(\byear{2016}).
\btitle{Metastability: a potential-theoretic approach}
\bvolume{351}.
\bpublisher{Springer}.
\end{bbook}
\endbibitem

\bibitem{brauer1966perturbations}
\begin{barticle}[author]
\bauthor{\bsnm{Brauer},~\bfnm{Fred}\binits{F.}}
(\byear{1966}).
\btitle{Perturbations of nonlinear systems of differential equations}.
\bjournal{Journal of Mathematical Analysis and Applications}
\bvolume{14}
\bpages{198--206}.
\end{barticle}
\endbibitem

\bibitem{breiman1992probability}
\begin{barticle}[author]
\bauthor{\bsnm{Breiman},~\bfnm{L}\binits{L.}}
(\byear{1992}).
\btitle{Probability, 1968}.
\bjournal{Reprint ed., SIAM, Philadelphia}.
\end{barticle}
\endbibitem

\bibitem{chen2002stochastic}
\begin{bbook}[author]
\bauthor{\bsnm{Chen},~\bfnm{Han-Fu}\binits{H.-F.}}
(\byear{2002}).
\btitle{Stochastic approximation and its applications}.
\bpublisher{Kluwer Academic, Dordrecht}.
\end{bbook}
\endbibitem

\bibitem{dalal2018finite1}
\begin{binproceedings}[author]
\bauthor{\bsnm{Dalal},~\bfnm{Gal}\binits{G.}},
  \bauthor{\bsnm{Sz{\"o}r{\'e}nyi},~\bfnm{Bal{\'a}zs}\binits{B.}},
  \bauthor{\bsnm{Thoppe},~\bfnm{Gugan}\binits{G.}} \AND
  \bauthor{\bsnm{Mannor},~\bfnm{Shie}\binits{S.}}
(\byear{2018}).
\btitle{Finite Sample Analyses for TD (0) with Function Approximation}.
In \bbooktitle{Proceedings of the 32nd AAAI Conference on Artificial
  Intelligence}
\bpages{6144-6160}.
\end{binproceedings}
\endbibitem

\bibitem{dalal2018finite2}
\begin{binproceedings}[author]
\bauthor{\bsnm{Dalal},~\bfnm{Gal}\binits{G.}},
  \bauthor{\bsnm{Thoppe},~\bfnm{Gugan}\binits{G.}},
  \bauthor{\bsnm{Sz{\"o}r{\'e}nyi},~\bfnm{Bal{\'a}zs}\binits{B.}} \AND
  \bauthor{\bsnm{Mannor},~\bfnm{Shie}\binits{S.}}
(\byear{2018}).
\btitle{Finite Sample Analysis of Two-Timescale Stochastic Approximation with
  Applications to Reinforcement Learning}.
In \bbooktitle{Proceedings of the 31st Conference On Learning Theory}
\bvolume{75}
\bpages{1199--1233}.
\end{binproceedings}
\endbibitem

\bibitem{duflo1997random}
\begin{bbook}[author]
\bauthor{\bsnm{Duflo},~\bfnm{Marie}\binits{M.}}
(\byear{1997}).
\btitle{Random iterative models}.
\bpublisher{Springer Verlag, Berlin-Heidelberg}.
\end{bbook}
\endbibitem

\bibitem{fathi2013transport}
\begin{barticle}[author]
\bauthor{\bsnm{Fathi},~\bfnm{Max}\binits{M.}} \AND
  \bauthor{\bsnm{Frikha},~\bfnm{Noufel}\binits{N.}}
(\byear{2013}).
\btitle{Transport-entropy inequalities and deviation estimates for stochastic
  approximation schemes}.
\bjournal{Electronic Journal of Probability}
\bvolume{18}
\bpages{1--36}.
\end{barticle}
\endbibitem

\bibitem{freidlin2014random}
\begin{bbook}[author]
\bauthor{\bsnm{Freidlin},~\bfnm{Mark~Iosifovich}\binits{M.~I.}} \AND
  \bauthor{\bsnm{Wentzell},~\bfnm{Alexander~D}\binits{A.~D.}}
(\byear{2014}).
\btitle{Random Perturbations of Dynamical Systems}.
\bpublisher{Springer}.
\end{bbook}
\endbibitem

\bibitem{frikha2012concentration}
\begin{barticle}[author]
\bauthor{\bsnm{Frikha},~\bfnm{Noufel}\binits{N.}} \AND
  \bauthor{\bsnm{Menozzi},~\bfnm{St{\'e}phane}\binits{S.}}
(\byear{2012}).
\btitle{Concentration bounds for stochastic approximations}.
\bjournal{Electronic Communications in Probability}
\bvolume{17}
\bpages{1--15}.
\end{barticle}
\endbibitem

\bibitem{gelfand1991recursive}
\begin{barticle}[author]
\bauthor{\bsnm{Gelfand},~\bfnm{Saul~B}\binits{S.~B.}} \AND
  \bauthor{\bsnm{Mitter},~\bfnm{Sanjoy~K}\binits{S.~K.}}
(\byear{1991}).
\btitle{Recursive stochastic algorithms for global optimization in R\^{}d}.
\bjournal{SIAM Journal on Control and Optimization}
\bvolume{29}
\bpages{999--1018}.
\end{barticle}
\endbibitem

\bibitem{herrmann2013stochastic}
\begin{bbook}[author]
\bauthor{\bsnm{Herrmann},~\bfnm{Samuel}\binits{S.}},
  \bauthor{\bsnm{Imkeller},~\bfnm{Peter}\binits{P.}},
  \bauthor{\bsnm{Pavlyukevich},~\bfnm{Ilya}\binits{I.}} \AND
  \bauthor{\bsnm{Peithmann},~\bfnm{Dierk}\binits{D.}}
(\byear{2013}).
\btitle{Stochastic Resonance: A Mathematical Approach in the Small Noise Limit}
\bvolume{194}.
\bpublisher{American Mathematical Soc.}
\end{bbook}
\endbibitem

\bibitem{kamal2010convergence}
\begin{barticle}[author]
\bauthor{\bsnm{Kamal},~\bfnm{Sameer}\binits{S.}}
(\byear{2010}).
\btitle{On the Convergence, Lock-In Probability, and Sample Complexity of
  Stochastic Approximation}.
\bjournal{SIAM Journal on Control and Optimization}
\bvolume{48}
\bpages{5178--5192}.
\end{barticle}
\endbibitem

\bibitem{khalil1996nonlinear}
\begin{bbook}[author]
\bauthor{\bsnm{Khalil},~\bfnm{Hassan~K}\binits{H.~K.}}
(\byear{2001}).
\btitle{Nonlinear systems (3rd ed.)}.
\bpublisher{Prentice hall New Jersey}.
\end{bbook}
\endbibitem

\bibitem{krasovskii1963stability}
\begin{barticle}[author]
\bauthor{\bsnm{Krasovskii},~\bfnm{Nicolai~Nikolaevich}\binits{N.~N.}}
(\byear{1963}).
\btitle{Stability of motion}.
\bjournal{Stanford University Press, Stanford, CA}.
\end{barticle}
\endbibitem

\bibitem{kumar2018bounds}
\begin{barticle}[author]
\bauthor{\bsnm{Kumar},~\bfnm{Bhumesh}\binits{B.}},
  \bauthor{\bsnm{Borkar},~\bfnm{Vivek}\binits{V.}} \AND
  \bauthor{\bsnm{Shetty},~\bfnm{Akhil}\binits{A.}}
(\byear{2018}).
\btitle{Bounds for Tracking Error in Constant Stepsize Stochastic
  Approximation}.
\bjournal{arXiv preprint arXiv:1802.07759}.
\end{barticle}
\endbibitem

\bibitem{kushner2012stochastic}
\begin{bbook}[author]
\bauthor{\bsnm{Kushner},~\bfnm{Harold~Joseph}\binits{H.~J.}} \AND
  \bauthor{\bsnm{Clark},~\bfnm{Dean~S}\binits{D.~S.}}
(\byear{1978}).
\btitle{Stochastic approximation methods for constrained and unconstrained
  systems}.
\bpublisher{Springer Verlag, New York}.
\end{bbook}
\endbibitem

\bibitem{kushner2003stochastic}
\begin{bbook}[author]
\bauthor{\bsnm{Kushner},~\bfnm{Harold~J}\binits{H.~J.}} \AND
  \bauthor{\bsnm{Yin},~\bfnm{George}\binits{G.}}
(\byear{2003}).
\btitle{Stochastic approximation and recursive algorithms and applications (2nd
  ed.)}.
\bpublisher{Springer Verlag, New York}.
\end{bbook}
\endbibitem

\bibitem{lakshmikantham1998method}
\begin{bbook}[author]
\bauthor{\bsnm{Lakshmikantham},~\bfnm{Vangipuram}\binits{V.}} \AND
  \bauthor{\bsnm{Deo},~\bfnm{Sadashiv}\binits{S.}}
(\byear{1998}).
\btitle{Method of variation of parameters for dynamic systems}.
\bpublisher{CRC Press, Boca Raton, FL}.
\end{bbook}
\endbibitem

\bibitem{liu2009exponential}
\begin{barticle}[author]
\bauthor{\bsnm{Liu},~\bfnm{Quansheng}\binits{Q.}} \AND
  \bauthor{\bsnm{Watbled},~\bfnm{Fr{\'e}d{\'e}rique}\binits{F.}}
(\byear{2009}).
\btitle{Exponential inequalities for martingales and asymptotic properties of
  the free energy of directed polymers in a random environment}.
\bjournal{Stochastic Processes and their Applications}
\bvolume{119}
\bpages{3101--3132}.
\end{barticle}
\endbibitem

\bibitem{miclo1992fini}
\begin{barticle}[author]
\bauthor{\bsnm{Miclo},~\bfnm{Laurent}\binits{L.}}
(\byear{1992}).
\btitle{Recuit simul{\'e} sans potentiel sur un ensemble fini}.
\bjournal{S{\'e}minaire de Probabilit{\'e}s de Strasbourg}
\bvolume{26}
\bpages{47--60}.
\end{barticle}
\endbibitem

\bibitem{miclo1992compacte}
\begin{barticle}[author]
\bauthor{\bsnm{Miclo},~\bfnm{Laurent}\binits{L.}}
(\byear{1992}).
\btitle{Recuit simul{\'e}sans potentiel sur une vari{\'e}t{\'e}riemannienne
  compacte}.
\bjournal{Stochastics: An International Journal of Probability and Stochastic
  Processes}
\bvolume{41}
\bpages{23--56}.
\end{barticle}
\endbibitem

\bibitem{polyak1992acceleration}
\begin{barticle}[author]
\bauthor{\bsnm{Polyak},~\bfnm{Boris~T}\binits{B.~T.}} \AND
  \bauthor{\bsnm{Juditsky},~\bfnm{Anatoli~B}\binits{A.~B.}}
(\byear{1992}).
\btitle{Acceleration of stochastic approximation by averaging}.
\bjournal{SIAM Journal on Control and Optimization}
\bvolume{30}
\bpages{838--855}.
\end{barticle}
\endbibitem

\bibitem{robbins1951stochastic}
\begin{barticle}[author]
\bauthor{\bsnm{Robbins},~\bfnm{Herbert}\binits{H.}} \AND
  \bauthor{\bsnm{Monro},~\bfnm{Sutton}\binits{S.}}
(\byear{1951}).
\btitle{A stochastic approximation method}.
\bjournal{The Annals of Mathematical Statistics}
\bpages{400--407}.
\end{barticle}
\endbibitem

\bibitem{teschl2012ordinary}
\begin{bbook}[author]
\bauthor{\bsnm{Teschl},~\bfnm{Gerald}\binits{G.}}
(\byear{2012}).
\btitle{Ordinary differential equations and dynamical systems}
\bvolume{140}.
\bpublisher{American Mathematical Society Providence}.
\end{bbook}
\endbibitem

\bibitem{vapnik2013nature}
\begin{bbook}[author]
\bauthor{\bsnm{Vapnik},~\bfnm{Vladimir}\binits{V.}}
(\byear{2013}).
\btitle{The nature of statistical learning theory}.
\bpublisher{Springer science \& business media}.
\end{bbook}
\endbibitem

\end{thebibliography}

\end{document}